\documentclass{amsart}
\usepackage{latexsym}
\usepackage{amsmath}
\usepackage{amssymb}
\usepackage[all,cmtip,2cell]{xy}
\UseTwocells
\usepackage{bracket}
\usepackage{autobreak}
\usepackage{enumitem}
\usepackage[hypertexnames=false]{hyperref} 
\usepackage[capitalize]{cleveref}
\crefname{equation}{}{}
\crefname{enumi}{}{}


\usepackage[\ifdefined\enabletodonotes\else disable\fi]{todonotes}

\usepackage{autonum} 

\usepackage{tikz}
\usetikzlibrary{positioning}
\usetikzlibrary{arrows.meta}

\numberwithin{equation}{section}
\newtheorem{thm}{Theorem}[section]
\newtheorem{prop}[thm]{Proposition}
\newtheorem{cor}[thm]{Corollary}
\newtheorem{lem}[thm]{Lemma}

\newtheorem{assertion}[thm]{Assertion}
\theoremstyle{definition}
\newtheorem{defn}[thm]{Definition}

\theoremstyle{remark}
\newtheorem{rem}[thm]{Remark}
\newtheorem{ex}[thm]{Example}


%
%
%
%

%

%

%

%
\newcommand{\K}{{\mathbb K}}
\newcommand{\Q}{{\mathbb Q}}
\newcommand{\F}{{\mathcal F}}
\newcommand{\C}{{\mathcal C}}
\newcommand{\D}{{\mathcal D}}
\newcommand{\e}{\varepsilon}
\newcommand{\R}{{\mathbb R}}
%

%

%

\newcommand{\mapright}[1]{%
 \smash{\mathop{%
  \hbox to 1cm{\rightarrowfill}}\limits_{#1}}}
\newcommand{\maprightd}[2]{%
 \smash{\mathop{%
  \hbox to 1.2cm{\rightarrowfill}}\limits^{#1}\limits_{#2}}}
\newcommand{\mapleft}[1]{%
 \smash{\mathop{%
  \hbox to 1cm{\leftarrowfill}}\limits_{#1}}}
\newcommand{\mapleftu}[1]{%
 \smash{\mathop{%
  \hbox to 0.8cm{\leftarrowfill}}\limits^{#1}}}
\newcommand{\maprightu}[1]{%
 \smash{\mathop{%
  \hbox to 1cm{\rightarrowfill}}\limits^{#1}}}
\newcommand{\maprightud}[2]{%
 \smash{\mathop{%
  \hbox to 0.5cm{\rightarrowfill}}\limits^{#1}_{#2}}}
\newcommand{\mapleftud}[2]{%
 \smash{\mathop{%
  \hbox to 1cm{\leftarrowfill}}\limits^{#1}_{#2}}}


\newcounter{eqn}[section]


\begin{document}

\title[Local systems in diffeology]{Local systems in diffeology
}

\footnote[0]{{\it 2020 Mathematics Subject Classification}: 
18F15, 58A35, 58A40, 55U10, 55P62,  55T99.
\\ 
{\it Key words and phrases.} Diffeology, simplicial set, local system, stratifold, fibrewise rationalization, de Rham complex, differential graded algebra, minimal model


Department of Mathematical Sciences, 
Faculty of Science,  
Shinshu University,   
Matsumoto, Nagano 390-8621, Japan   
e-mail:{\tt kuri@math.shinshu-u.ac.jp}
}

\author{Katsuhiko KURIBAYASHI}
\date{}
   
\maketitle

\begin{abstract} By making use of Halperin's local systems over simplicial sets and the model structure of the category of diffeological spaces due to Kihara, 
we introduce a framework of rational homotopy theory for such smooth spaces with arbitrary fundamental groups. As a consequence, we have an equivalence between the homotopy categories of fibrewise rational diffeological spaces and an algebraic category of minimal local systems elaborated by G\'omez-Tato, Halperin and Tanr\'e. 
In the latter half of this article, a spectral sequence converging to the singular de Rham cohomology of a diffeological adjunction space is constructed with the pullback
of relevant local systems. In case of a stratifold obtained by attaching manifolds, 
the spectral sequence converges to the Souriau--de Rham cohomology algebra of the diffeological space. By using the pullback construction, 
we also discuss a local system model for a topological homotopy pushout. 
\end{abstract}

\tableofcontents

\section{Introduction}

Diffeology invented by Souriau \cite{So} provides plenty of useful objects which are generalizations of manifolds. In fact, the category 
$\mathsf{Diff}$ of {\it diffeological spaces} is complete, cocomplete and cartesian closed. Therefore, pullbacks, adjunction spaces and mapping spaces are defined naturally as diffeological spaces with smooth structures. A notion of smooth homotopy groups of diffeological spaces is established in \cite{PI}; see also \cite{C-W} and \cite[Chapter 5]{IZ}. Smooth classifying spaces,  which play an important role in classifying diffeological bundles, are considered in \cite{M-W, C-W2020}. It is worth mentioning that a diffeological space is also regarded as a concrete sheaf; see \cite{B-H, W-W}. 
Thus, diffeology makes a fruitful field involving, moreover, manifolds with corners, orbifolds, stratifolds in the sense of Kreck \cite{Kreck}, infinite-dimensional manifolds and the notion of Chen's iterated integral; see \cite{ A-K, G-Z, IZ-K-Z, Kihara, K2020} for details and also  Appendix \ref{app:appA} for stratifolds.

Throughout this article, manifolds are assumed to be smooth, Hausdorff, second countable and without boundary unless otherwise specified. 
A significant categorical point of view is that the forgetful functor from the category $\mathsf{Mfd}$ of manifolds to the category $\mathsf{Top}$ 
of topological spaces factors through an embedding functor $m :\mathsf{Mfd} \to \mathsf{Diff}$. 
Therefore, if  homotopy theory is furthermore developed in 
$\mathsf{Diff}$ without forgetting smooth structures, we may have new ideas for the study of smooth homotopy invariants of manifolds and singular spaces.

Rational homotopy theory initiated by Quillen \cite{Quillen} and Sullivan \cite{Sullivan} presents remarkable advantages in computing homotopy invariants of topological spaces and manifolds. In the theory, geometric information is encoded algebraically in differential graded Lie algebras and 
commutative differential graded algebras (CDGA's for short); see, for example, \cite{FHT, FOT} for applications of such algebraic models. Thus, one might expect an advance of rational homotopy theory for diffeological spaces. 
For example, the calculus of differential forms (the de Rham calculus) for manifolds expands in $\mathsf{Diff}$ thanks to the Souriau--de Rham complex \cite{So}; see also  \cite[Chapters 6, 7, 8 and 9]{IZ} and \cite{P}. 
Thus, the calculus will be more accessible via algebraic models if rational homotopy theory becomes to be applicable to $\mathsf{Diff}$. 

One of two aims of this manuscript is to introduce a framework of rational (real) homotopy theory for diffeological spaces in which we may develop the diffeological de Rham calculus. In order to support the theory,  it is important to offer computational tools for invariants of diffeological spaces, for example, de Rham and singular cohomology algebras.  
Another aim is to construct a new spectral sequence converging to the singular de Rham cohomology \cite{K2020} of a diffeological adjunction space. 
To those ends, {\it local systems} over simplical sets with values in CDGA's introduced and developed by Halperin \cite{Halperin} are 
effectively used in diffeology.

For the reader, brief expositions of basics on  diffeology and a review of local systems \cite{Halperin, G-H-T} are contained in this manuscript; see 
Sections \ref{sect:diffspaces} and \ref{sect:local_systems}.

\subsection{An overview of rational homotopy theory for diffeological spaces} 
Very recently, Kihara \cite{Kihara, Kihara1} has succeeded in 
introducing a {\it Quillen model structure} on the category $\mathsf{Diff}$.  
The result \cite[Theorem 1.5]{Kihara} enables us to obtain a pair of Quillen equivalences  
\begin{equation}\label{eq:equivalence}
\xymatrix@C45pt@R10pt{
\mathsf{Set}^{\Delta^{\text{op}}}  
\ar@<1ex>[r]^-{ | \ |_D}
& \mathsf{Diff}  \ar@<1.2ex>[l]^-{S^D(\ \! )}_-{\bot}   \\
}
\end{equation}
between the category $\mathsf{Set}^{\Delta^{\text{op}}}$ of simplicial sets with the Kan--Quillen model structure \cite{Quillen_model}\cite[Chapter I,  Theorem 11.3]{G-J}
and $\mathsf{Diff}$ endowed with the model structure described in 
\cite[Theorem 1.3]{Kihara1}; see also Theorem \ref{thm:Kihara_Model}. Here $| \ |_D$ and $S^D( \ )$ denote  the {\it realization functor} and the {\it singular simplex functor}, respectively; see Section \ref{sect:Model} for the functors. Thus, 
rational homotopy theory for nilpotent diffeological spaces comes immediately via these model structures. 
In fact, by combining the result with the Sullivan--de Rham equivalence theorem \cite[9.4 Theorem]{B-G}, one has the following theorem. 

\begin{thm}\label{thm:nilpotentDiff} Let $\text{\em fN$\Q$-Ho}(\mathsf{Diff})$ be the full subcategory of the homotopy category $\text{\em Ho}(\mathsf{Diff})$ consisting of connected nilpotent rational diffeological spaces which correspond to connected nilpotent rational Kan complexes  
of finite $\Q$-type via the singular simplex functor $S^D( \ )$ in the diagram (\ref{eq:equivalence}). Then, 
there exists an equivalence of categories 
\[
\xymatrix@C15pt@R10pt{
\text{\em Ho}(\mathcal{M}\text{\em -alg}^{\text{op}}) \ar[r]^-{\simeq}&
 \text{\em fN$\Q$-Ho}(\mathsf{Diff}), 
}
\]
where $\mathcal{M}\text{\em -alg}$ denotes the category of minimal Sullivan algebras of finite $\Q$-type. 
\end{thm}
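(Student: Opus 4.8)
The plan is to obtain the equivalence by composing two known ones: Kihara's Quillen equivalence (\ref{eq:equivalence}) and the Bousfield--Gugenheim form of the Sullivan--de Rham equivalence theorem. There is no new analytic input; the content is the bookkeeping that matches up the relevant full subcategories. I would first note that, by \cite[Theorem 1.5]{Kihara}, the adjoint pair in (\ref{eq:equivalence}) is a Quillen equivalence, so its total derived functors give an equivalence of homotopy categories
\[
\mathbb{L}|\ |_D \colon \text{Ho}(\mathsf{Set}^{\Delta^{\text{op}}}) \xrightarrow{\simeq} \text{Ho}(\mathsf{Diff})
\]
with quasi-inverse $\mathbb{R}S^D$; since every simplicial set is cofibrant, $\mathbb{L}|\ |_D$ coincides with $|\ |_D$ on objects. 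Writing $\mathcal{K}$ for the full subcategory of $\text{Ho}(\mathsf{Set}^{\Delta^{\text{op}}})$ spanned by the connected nilpotent rational Kan complexes of finite $\Q$-type, I would then observe that, by the very definition of $\text{fN}\Q\text{-Ho}(\mathsf{Diff})$ as the full subcategory of $\text{Ho}(\mathsf{Diff})$ whose objects correspond under $S^D(\ )$ to objects of $\mathcal{K}$, restricting the displayed equivalence to $\mathcal{K}$ and its essential image gives an equivalence
\[
\mathcal{K} \xrightarrow{\simeq} \text{fN}\Q\text{-Ho}(\mathsf{Diff}),
\]
a restriction of an equivalence of categories to a full subcategory and its essential image being again an equivalence.

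Next I would invoke the Bousfield--Gugenheim theorem \cite[9.4 Theorem]{B-G} (see also \cite{FHT}): the spatial realization functor and the Sullivan--de Rham functor $A_{PL}$ restrict to an adjoint equivalence between $\text{Ho}(\mathcal{M}\text{-alg}^{\text{op}})$ and $\mathcal{K}$, sending a minimal Sullivan algebra $(\Lambda V,d)$ of finite $\Q$-type to its spatial realization --- a connected nilpotent rational Kan complex of finite $\Q$-type --- and sending an object of $\mathcal{K}$ to its minimal Sullivan model. Composing this with the equivalence of the previous paragraph yields the asserted
\[
\text{Ho}(\mathcal{M}\text{-alg}^{\text{op}}) \xrightarrow{\simeq} \text{fN}\Q\text{-Ho}(\mathsf{Diff}),
\]
which on objects carries $(\Lambda V,d)$ to the $D$-realization of its spatial realization.

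The steps that require care, rather than fresh proof, are: (i) that the homotopical invariants used to cut out these subcategories are compatible with (\ref{eq:equivalence}) --- this is immediate, since connectedness, nilpotency, rationality and finite $\Q$-type of a diffeological space $X$ are, in the present framework, defined through $S^D(X)$ (equivalently, through the smooth homotopy and (co)homology groups of $X$, which the Quillen equivalence carries isomorphically); and (ii) that the full subcategory of $\text{Ho}(\mathsf{Set}^{\Delta^{\text{op}}})$ appearing in Bousfield--Gugenheim --- nilpotent rational spaces of finite type up to weak equivalence --- may be identified with $\mathcal{K}$, which holds because every object of $\text{Ho}(\mathsf{Set}^{\Delta^{\text{op}}})$ is isomorphic to a Kan complex and these properties are invariant under weak equivalence. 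I expect the only mildly delicate point to be the routine one of using the total right derived functor $\mathbb{R}S^D$ (precomposition of $S^D$ with fibrant replacement in $\mathsf{Diff}$) wherever homotopy invariance is needed; this carries no analytic difficulty, and the substance of the statement resides entirely in the two quoted inputs.
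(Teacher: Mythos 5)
Your proposal is correct and follows exactly the paper's route: the paper derives Theorem \ref{thm:nilpotentDiff} immediately by combining Kihara's Quillen equivalence (\ref{eq:equivalence}) with the Bousfield--Gugenheim Sullivan--de Rham equivalence \cite[9.4 Theorem]{B-G}, restricted to the relevant full subcategories. Your additional bookkeeping about derived functors and the compatibility of the defining invariants is sound and simply makes explicit what the paper leaves implicit.
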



For Sullivan algebras, the reader is refered to \cite[Section 7]{B-G} and \cite[Section 12]{FHT}.  
We observe that for each diffeological space $M$, the simplicial set $S^D(M)$ is a Kan complex; see \cite[Lemma 9.4 (1)]{Kihara1}. 
Moreover, the result \cite[Theorem 1.4]{Kihara1} asserts that the homotopy groups of a pointed diffeological space $M$ in the sense of Iglesias-Zemmour \cite[5.15, 5.17]{IZ} (see also \cite[Theorem 3.2]{C-W}) are isomorphic to those of the simplicial set $S^D(M)$. 

We proceed to the consideration on localizations of more general diffeological spaces. 
In \cite{G-H-T}, G\'omez-Tato, Halperin and Tanr\'e constructed an algebraic rational homotopy theory for topological spaces 
with arbitrary fundamental groups by means of {\it minimal} local systems. 
In this manuscript, by relating the local systems over $K(\pi, 1)$-spaces to the model structure of $\mathsf{Diff}$, 
we propose a framework of rational homotopy theory for diffeological spaces to which we may apply a lot of computational machinery in the  theory for topological spaces. 

In order to describe our main result pertaining to localizations of diffeological spaces, we fix some notation and terminology. 
We say that a pointed connected Kan complex $X$ is {\it fibrewise rational} if the universal cover $\widetilde{X}$ of $X$ is rational; that is, 
$H_i(\widetilde{X} ; {\mathbb Z})$ is a vector space over $\Q$ for $i \geq 2$; see \cite[Definition 6.1]{G-H-T}. A fibrewise rational Kan complex $X$ is of {\it finite type} if 
$H_i(\widetilde{X}; {\mathbb Z})$ is a finite dimensional vector space over $\Q$ for each $i\geq 2$.  
We refer the reader to \cite{Iv,  Mo, R-W-Z} for comparisons between the fibrewise localization (see Section \ref{sect:local_systems}) and other localizations.
In particular,  the result \cite[Theorem 3]{R-W-Z} tells us that a map $f : (X, b)\to (Y, c)$ between pointed path-connected spaces is a fibrewise rational homotopy equivalence if and only if the induced map $C_*(X, b) \to C_*(Y, b)$ between the pointed normalized singular chains induces 
a quasi-isomorphism on the cobar construction. 

We call a pointed connected diffeological space $M$ {\it fibrewise rational} (of {\it finite type}) if so is the Kan complex $S^D(M)$. One of our main results is described as follows.  

\begin{thm}\label{thm:summary}
Let $\text{\em Ho}(\mathsf{Diff}_{*})$ be the homotopy category of pointed diffeological spaces and $\text{\em fib$\Q$-Ho}(\mathsf{Diff}_*)$ the full subcategory of $\text{\em Ho}(\mathsf{Diff}_{*})$ consisting of fibrewise rational connected diffeological spaces of finite type. Then, there exists an equivalence of categories 
\[
\xymatrix@C15pt@R10pt{
\text{\em Ho}({\mathcal M}_\Q) \ar[r]^-{\simeq}&
\text{\em fib$\Q$-Ho}(\mathsf{Diff}_{*}), 
 }
\]
where $\text{\em Ho}({\mathcal M}_\Q)$ is the homotopy category of {\em minimal local systems} introduced by G\'omez-Tato, Halperin and Tanr\'e in \cite{G-H-T}; see Section \ref{sect:local_systems} for the local system. 
\end{thm}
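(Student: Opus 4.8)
The plan is to transport the G\'omez-Tato--Halperin--Tanr\'e equivalence of \cite{G-H-T} along Kihara's Quillen equivalence \eqref{eq:equivalence}. Recall that the weak equivalences of $\mathsf{Diff}$ are, by the construction of the model structure in \cite[Theorem 1.3]{Kihara1} (see also Theorem \ref{thm:Kihara_Model}), exactly the maps that $S^D$ sends to weak equivalences of simplicial sets, and that $S^D(M)$ is always a Kan complex by \cite[Lemma 9.4 (1)]{Kihara1}; since every simplicial set is cofibrant, the Quillen equivalence \eqref{eq:equivalence} induces an equivalence $S^D : \text{Ho}(\mathsf{Diff}) \xrightarrow{\ \simeq\ } \text{Ho}(\mathsf{Set}^{\Delta^{\text{op}}})$ with quasi-inverse induced by $|\ |_D$, and passing to the pointed model structures gives $S^D : \text{Ho}(\mathsf{Diff}_{*}) \xrightarrow{\ \simeq\ } \text{Ho}(\mathsf{Set}^{\Delta^{\text{op}}}_{*})$.

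Next I would show that this equivalence restricts to one between $\text{fib$\Q$-Ho}(\mathsf{Diff}_{*})$ and the full subcategory $\text{fib$\Q$-Ho}(\mathsf{Set}^{\Delta^{\text{op}}}_{*})$ of $\text{Ho}(\mathsf{Set}^{\Delta^{\text{op}}}_{*})$ spanned by the fibrewise rational connected Kan complexes. That $S^D$ sends the former into the latter is precisely the definition of a fibrewise rational diffeological space. For essential surjectivity onto the latter, given a fibrewise rational connected Kan complex $X$ one chooses, by essential surjectivity of the unrestricted equivalence, a pointed diffeological space $M$ with $S^D(M) \simeq X$ in $\text{Ho}(\mathsf{Set}^{\Delta^{\text{op}}}_{*})$; since fibrewise rationality is a condition on $\pi_1$ together with the rational homology of the universal cover, and hence invariant under weak equivalence, $S^D(M)$, and thus $M$ by definition, is fibrewise rational, while $M$ is connected because $S^D(M)$ is, using \cite[Theorem 1.4]{Kihara1} to match homotopy groups. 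Full faithfulness is inherited from the unrestricted equivalence since both sides are full subcategories, so $S^D$ restricts to the asserted equivalence.

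It then remains to identify $\text{fib$\Q$-Ho}(\mathsf{Set}^{\Delta^{\text{op}}}_{*})$ with $\text{Ho}({\mathcal M}_\Q)$, which is the content of the G\'omez-Tato--Halperin--Tanr\'e theory \cite{G-H-T} reviewed in Section \ref{sect:local_systems}: their main theorem gives an equivalence $\text{Ho}({\mathcal M}_\Q) \xrightarrow{\ \simeq\ } \text{fib$\Q$-Ho}(\mathsf{Set}^{\Delta^{\text{op}}}_{*})$, assigning to a minimal local system a simplicial model of its fibrewise rationalization and, conversely, to a space the minimal model of the local system of differential forms over a $K(\pi,1)$ classifying its fundamental group. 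If that statement is phrased over topological spaces, one first composes with the standard Quillen equivalence $|\ | : \mathsf{Set}^{\Delta^{\text{op}}} \rightleftarrows \mathsf{Top} : \mathrm{Sing}$, under which fibrewise rationality is preserved because geometric realization and the singular functor preserve homotopy and homology groups, in particular those of universal covers. Composing this equivalence with the one of the previous paragraph produces $\text{Ho}({\mathcal M}_\Q) \simeq \text{fib$\Q$-Ho}(\mathsf{Diff}_{*})$, as claimed.

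The main obstacle I anticipate is the second step: one must verify that the condition imposed on a diffeological space $M$ through the Kan complex $S^D(M)$ is the honest pullback of the simplicial (and topological) notion of being fibrewise rational, which amounts to knowing that the diffeological universal cover of $M$ is modelled via $S^D$ by the universal cover of $S^D(M)$ and carries the expected rational homology. Once that compatibility is secured, together with the isomorphism of homotopy groups from \cite[Theorem 1.4]{Kihara1}, the remainder of the argument is a formal splicing of the three Quillen equivalences.
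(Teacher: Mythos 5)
Your first step --- passing from $\mathsf{Diff}_*$ to $\mathsf{Set}^{\Delta^{\text{op}}}_{\ *}$ via Kihara's Quillen equivalence (\ref{eq:QE_based}) and noting that the restriction to fibrewise rational objects is tautological --- coincides with the paper's reduction. Note, though, that the ``main obstacle'' you anticipate (matching diffeological universal covers with simplicial ones) does not actually arise: the paper \emph{defines} a diffeological space to be fibrewise rational exactly when the Kan complex $S^D(M)$ is, so there is no independent diffeological notion to reconcile.

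The gap is in your second step, which is precisely where the paper does its work. You treat the equivalence $\text{Ho}({\mathcal M}_\Q)\simeq \text{fib$\Q$-Ho}(\mathsf{Set}^{\Delta^{\text{op}}}_{\ *})$ as ``the content of'' \cite{G-H-T}, to be obtained, if need be, by composing their topological statement with the standard realization--singular-simplex equivalence $|\ |/S$. But \cite[Theorem 6.2]{G-H-T} is stated for topological spaces, and the paper explicitly flags that the simplicial version is \emph{not} immediate: it is isolated as Theorem \ref{thm:main1} and proved by redoing the faithfulness, fullness and essential surjectivity arguments inside $\mathsf{Set}^{\Delta^{\text{op}}}$, using the adjunction of Theorem \ref{thm:adjoint} and the bijection \cite[Theorem 5.8]{G-H-T} fibrewise over each $K(\pi_1,1)$. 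Your formal two-out-of-three deduction would require verifying at least: (i) that the G\'omez-Tato--Halperin--Tanr\'e realization of a minimal local system is naturally the topological realization of the simplicial realization $\langle\ \rangle$; (ii) that $\langle\ \rangle$ descends to homotopy categories and that a simplicial homotopy $\langle f\rangle\simeq\langle g\rangle$ can be pushed down to a homotopy \emph{over the base} $K(\pi_1,1)$ --- this is the nontrivial diagram chase with the pullback $Y^u$ in the paper's proof of Theorem \ref{thm:main1}, and it is what makes faithfulness work; and (iii) that the homotopy category of fibrewise rational spaces appearing in \cite{G-H-T} is identified by $|\ |/S$ with $\text{fib$\Q$-Ho}(\mathsf{Set}^{\Delta^{\text{op}}}_{\ *})$. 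None of these verifications is carried out in your proposal, so as written the argument assumes the theorem's essential content rather than proving it.
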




Unlike Theorem \ref{thm:nilpotentDiff}, the proof of the result above is not immediate.  
In fact, we prove Theorem \ref{thm:summary} by extracting skillfully the simplicial argument from the proof of \cite[Theorem 6.2]{G-H-T}; see 
Theorem \ref{thm:main1}. As a consequence, we see that the fibrewise rational homotopy theory for topological spaces factors through that for diffeological spaces; see Section \ref{sect:perspectives} for a precise discussion on this topic. 

Thanks to Theorem  \ref{thm:summary}, the homotopy classification problem of {\it cofibrant} diffeological spaces is equivalent to that of minimal local systems; see, for example, \cite{Hovey} for the general theory of model categories. We observe that all diffeological spaces are fibrant in $\mathsf{Diff}$ and hence in $\mathsf{Diff}_*$ ; see Theorem \ref{thm:Kihara_Model} and \cite[Proposition 1.1.8]{Hovey}.  

One might be interested in an example of an explicit minimal local system model for a diffeological space; see Section \ref{section:FR} for such a model.
While the result \cite[Theorem 3.12]{G-H-T} (Theorem \ref{thm:miminal_models}) indeed allows us to obtain a minimal local system model for a given diffeological space  in Theorem \ref{thm:summary}, 
Theorem \ref{thm:nilToMinimalLocalSystem} gives a comparably tractable minimal one for a nilpotent diffeological space. 
In fact, a {\it relative} minimal Sullivan algebra (\cite[Section 14]{FHT}) obtained by the Moore-Postnikov tower of a nilpotent diffeological space $M$ gives rise to a minimal local system for $M$  in 
Theorem \ref{thm:summary}; see the commutative diagram (\ref{eq:*-construction}) in Proposition \ref{prop:nilToMinimalLocalSystem}. 

For example, let $L^\infty M:= C^\infty(S^1, M)$ be the diffeological free loop space of a simply connected manifold $M$, which is the diffeological space of smooth maps from $S^1$ to $M$ endowed with the functional diffeology; see Example \ref{ex:manifolds_subsets_products} (1) and (4) for such a diffeology. 
In Example \ref{ex:LM}, we have an explicit minimal local system model  for 
$L^\infty M$ by using a minimal Sullivan model for a topological free loop space $L^cM$ endowed with the compact-open topology.



\subsection{A spectral sequence for a diffeological adjunction space}
Local systems over $K(\pi, 1)$-spaces are certainly key tools to construct the fibrewise localizations in \cite{G-H-T} and then in this manuscript. Moreover, since more general local systems are defined over arbitrary simplicial sets, we expect  that they will provide further applications. By using machinery in \cite[Chapters 12 and 14]{Halperin}, we also discuss applications of such local systems in diffeology and topology. 

Consider a commutative diagram 
\begin{equation}
\xymatrix@C30pt@R15pt{
X_1 \ar[dr]_-{p_1} & X_0 \ar[d]_-{p_0} \ar[l] \ar[r] & X_2\ar[dl]^-{p_2} \\
 & K &
}
\end{equation}
in $\mathsf{Set}^{\Delta^{\text{op}}}$ consisting of Kan fibrations $p_i$ for $i = 0, 1$ and $2$. Then, under a suitable condition on the diagram, 
the pullback ${\mathcal Q}$ of local systems obtained by $p_i$ enables us to construct a first quadrant spectral sequence of algebras 
of the form 
\[
E_2^{*,*} \cong H^*(K, {\mathcal H}_{\mathcal Q}^*) \Longrightarrow H^*(A_{DR}(X_1\cup_{X_0}X_2)). 
\]
Here ${\mathcal H}_{\mathcal Q}^*$ is the system of local coefficients associated with ${\mathcal Q}$ and $A_{DR}(X_1\cup_{X_0}X_2)$ denotes the singular de Rham complex of the adjunction simplicial set $X_1\cup_{X_0}X_2$; see Section \ref{sect:deRham} for the complex. 
The details on the spectral sequence are described in Theorem \ref{thm:SS}. Applying the result in the category $\mathsf{Diff}$, in particular, we have a spectral sequence converging to the Souriau--de Rham cohomology of 
a diffeological adjunction space such as a {\it smooth CW complex} \cite{I} and a {\it parametrized stratifold} \cite{Kreck}; see Examples \ref{ex:an_example2} and \ref{ex:an_example}. 
A small computation with Theorem  \ref{thm:SS} is made in Example \ref{ex:compare}. 

The spectral sequence in Theorem \ref{thm:SS} is equipped with a differential graded algebra structure, so that we may recover the target cohomology as an algebra 
unlike a Mayer--Vietoris type exact sequence. In this sense,  
the tool for computing the de Rham cohomology  seem to be a novel type of spectral sequences. For the Leray--Serre spectral sequence and the Eilenberg--Moore spectral sequence for fibrations in $\mathsf{Diff}$, 
we refer the reader to the results \cite[Theorems 5.4 and 5.5]{K2020}. 

More applications of the pullback of local systems in $\mathsf{Top}$ are described in Section \ref{sect:Top}. In particular, Theorem \ref{thm:NewAssertions} tells us 
that an appropriate local system for the homotopy colimit of $\xymatrix@C10pt@R10pt{\! X& C \ar[l]_i \ar[r]^j & Y}$ over a space in $\mathsf{Top}$ is obtained by the pullback construction of local systems for $X$, $C$ and $Y$. Thus we have a {\it local system model} for the homotopy colimit, which determines the fibrewise rational homotopy type of the space; see the end of Section \ref{sect:RHT_for_Diff} for the terminology `local system model'. 
As a corollary of Theorem \ref{thm:NewAssertions}, we have a spectral sequence converging to the singular cohomology of the homotopy colimit 
with coefficients in $\Q$ as an algebra; see Theorem \ref{thm:Top_SS}.  
Moreover, 
combining Theorem \ref{thm:NewAssertions} with the result on an explicit minimal local system for $L^c{\mathbb C}P^n$ considered in 
Section \ref{sect:LocalSystems}, we obtain a comparatively tractable local system for 
$\texttt{hocolim}(\xymatrix@C10pt@R10pt{\! L^c{\mathbb C}P^n& \Omega^c{\mathbb C}P^n \ar[l]_i \ar[r]^i & L^c{\mathbb C}P^n})$ over the singular simplex of 
$K(\pi_1(\Omega^c{\mathbb C}P^n), 1)$,  
where $i$ is the inclusion of the based loop space; see Example \ref{ex:TopLoopSp}. 

One might be interested in the same results in the category $\mathsf{Diff}$ as in Theorems \ref{thm:NewAssertions} and  \ref{thm:Top_SS}. 
While the notion of $h$-{\it cofibration} in the sense of Kihara \cite[Section 9.2]{Kihara} would be applicable in considering such a spectral sequence, we do not pursue the topic in this manuscript.

\subsection{Remarks and future perspectives}\label{sect:perspectives}
What is worth noting is that we can use  every {\it admissible} simplicial CDGA in developing non-simply connected rational (real) homotopy theory due to G\'omez-Tato, Halperin and Tanr\'e. Thanks to the highly versatility of the framework, the simplicial polynomial de Rham complex $(A_{PL}^*)_\bullet$ (see, for example \cite[Section 10 (c)]{FHT}) 
and also the simplicial singular de Rham complex $(A_{DR}^*)_\bullet$ work well in the study of rational (real) homotopy theory for diffeological spaces; see Sections \ref{sect:deRham} and \ref{sect:RHT_for_Diff} for the details. 

We describe an advantage of the use of such simplicial CDGA's.
For a simplicial CDGA $A^*_\bullet$ and a simplicial set $K$, we define a CDGA $A^*_\bullet(K)$ by  
\[
A^*_\bullet(K) := \mathsf{Set}^{\Delta^{\text{op}}}(K, A^*_\bullet), 
\]
where the CDGA structure is given by that of the target $A^*_n$ for each $n$. 
In what follows, we may write $A(K)$ for $A^*_\bullet(K)$.
While there is a quasi-isomorphism between 
$(A_{PL})(K)\otimes {\mathbb R}$ and 
$(A_{DR})(K)$ for each simplicial set $K$; see \cite[Corollary 3.5]{K2020}, a direct use of the simplicial de Rham complex for a
diffeological space brings {\it smooth structures} to real homotopy theory. 
In fact, if a given diffeological space $X$ is 
a smooth CW complex in the sense of Iwase \cite{I} or a parametrized stratifold (a {\it p-stratifold} for short) in the sense of Kreck \cite{Kreck}, then the CDGA 
$A_{DR}(S^D(X))$  
is quasi-isomorphic to the de Rham complex $\Omega^*(X)$ due to Souriau \cite{So} via the {\it factor map}, where $S^D (\ )$ denotes the singular simplex functor 
in (\ref{eq:equivalence}); see Section \ref{sect:deRham} and 
\cite[Theorem 2.4]{K2020} for the factor map and the assertion. 
Thus, when we consider a commutative algebraic model  over $\R$ for such a diffeological space $X$, its elements may be described 
in terms of {\it differential forms} in $\Omega^*(X)$ by which key features of $X$ are captured. 

To illustrate the point, we consider the unreduced suspension $\Sigma M$ of a closed manifold $M$, which is regarded as a p-stratifold. 
Propositions \ref{prop:suspensions} and \ref{prop:formsOnSigma M} yield commutative models for $\Sigma M$ described in terms of 
the de Rham complex $\Omega^*(M)$ and the differential $1$-form $dt$ in $\Omega^*(\R^1)$.  As a consequence, we see that all products of closed forms except the unit vanish in the de Rham cohomology of $\Sigma M$ because $dt\wedge dt = 0$.  While the vanishing property is well know for the topological suspension space, it is here stressed that the propositions above clarify the cause of the vanishing with the differential $1$-form. 

We observe that if a  diffeological space $X$ comes from a manifold via the embedding $m :\mathsf{Mfd} \to \mathsf{Diff}$ mentioned above, then the Souriau--de Rham complex of $X$ is isomorphic to the usual de Rham complex as a CDGA; see Remark \ref{rem:tautological_map} below. 
We refer the reader to 
\cite{K2023} for the topic on describing generators in the de Rham cohomology of a diffeological free loop space with differential forms via Chen's iterated integral map.

We relate Theorems \ref{thm:nilpotentDiff} and \ref{thm:summary} to rational homotopy theory for topological spaces.   
First, we consider a diagram  
\begin{equation}\label{eq:Diagram}
\xymatrix@C60pt@R15pt{
& & \mathsf{Sets}^{\Delta^{\text{op}}}   \ar@<1ex>[d]^-{| \ |_D}  \ar@<1.7ex>[rd]^-(0.6){| \ |}&  \\
\mathsf{Mfd} \ar[r]_{\text{\tiny fully faithful}}^j \ar@/^2.0pc/[rr]^{m : \text{fully faithful}} & \mathsf{Stfd} \ar[r]^-k &\mathsf{Diff} 
\ar@<0.7ex>[r]^-{D} \ar@<1.5ex>[u]^-{S^D}_{\vdash}  
& \mathsf{Top}. \ar@<1.0ex>[l]^-{C}_-{\bot} \ar@<-0ex>[ul]^-(0.7){S}_{\vdash} 
}
\end{equation}
This explains relationships among categories and functors which we deal with in this manuscript. Here $| \ |$ and $S$ denote the topological realization functor and the standard singular simplex functor between $\mathsf{Top}$ and $\mathsf{Sets}^{\Delta^{\text{op}}}$, respectively. The functor $j$ is recalled in Appendix \ref{app:appA} together with the fact that the composite $k\circ j$ coincides with the embedding $m$; see Example \ref{ex:manifolds_subsets_products} (1) for the functor $m$.  A brief review of the functors $C$ and $D$ is given in 
Section \ref{sect:diffspaces}; see \cite{C-W, Kihara1, Kihara, S-Y-H} for more details. 
As noted in the beginning of the introduction, the composite $D\circ m$ is nothing but the forgetful functor. 

The result \cite[Lemma 9.5]{Kihara} asserts that the adjoints $| \ |$ and $S$ between $\mathsf{Sets}^{\Delta^{op}}$ and 
$\mathsf{Top}$ factor through $\mathsf{Diff}$ with the functors 
described in the diagram (\ref{eq:Diagram}), respectively. Hence,  
in view of Theorem \ref{thm:nilpotentDiff}, we see that the minimal Sullivan model  for a nilpotent connected topological space $X$ whose homotopy type is of a CW-complex coincides with such a model  for the diffeological space $C(X)$. 

Moreover, pointed connected topological spaces, which are in the image of the realization functor $| \ |$ from the full subcategory 
$\text{fib$\Q$-Ho}(\mathsf{Set}^{\Delta^{\text{op}}}_{\ *})$, are rational spaces in the sense of G\'omez-Tato, Halperin and Tanr\'e \cite{G-H-T}. Thus, the equivalence in \cite[Theorem 6.2 (1)]{G-H-T} factors through that in Theorem \ref{thm:summary}; see also Remark \ref{rem:nil_top}. 
More precisely, the composite $| \ |_D \circ i \circ \circ\langle \  \rangle$ with the functors $i$ and $\langle  \ \rangle$ in the sequence (\ref{eq:embedding}) below gives rise to the equivalence in \cite[Theorem 6.2 (1)]{G-H-T}; see (\ref{eq:realization}) for the definition of the functor $\langle  \ \rangle$. 

In rational homotopy theory for topological spaces, the result \cite[Proposition 17.9]{FHT} asserts that the composite $| \ |\circ (A_{PL}^*)_\bullet$ assigns 
a locally trivial bundle to a relative Sullivan algebra. 
Thus, we can investigate topological fibrations algebraically; see also \cite[20.3 Theorem]{Halperin}.
However, the realization functor 
$| \ |_D$ does not preserve finite products in general, unlike the realization functor $| \ |$ to $\mathsf{Top}$; see \cite[Remark 2.21]{Kihara}. Then, it does not seem that the realization of a relative Sullivan algebra with functors $| \ |_D$ and $(A_{PL}^*)_\bullet$ gives a diffeological bundle in $\mathsf{Diff}$ similar to the topological case in general. We have to take care the difference of behavior when converting results in $\mathsf{Top}$ to those in $\mathsf{Diff}$ via 
$\mathsf{Sets}^{\Delta^{\text{op}}}$. 

We expect that in future work, the de Rham calculus and 
rational (real) homotopy theory in diffeology are developed with commutative algebraic models for diffeological spaces, which will be obtained by modifying such models for topological spaces; see Examples \ref{ex:Alg_models}, \ref{ex:LM} and Theorem \ref{thm:nilToMinimalLocalSystem} which illustrate the expectation. 

\subsection{The organization of the paper}
Section \ref{sect:Diff} gives a brief overview of concepts and materials that we use throughout this article.  
More precisely, 
after recalling the definition of a diffeological space, we summarize the Quillen equivalence between the category $\mathsf{Set}^{\Delta^{\text{op}}}$ of simplicial sets and 
$\mathsf{Diff}$ which is proved by Kihara in \cite{Kihara}. Moreover, the original de Rham complex due to Souriau, the singular one and the de Rham theorem for  diffeological spaces are reviewed briefly.  

In Section \ref{sect:RHT_for_Diff}, we recall carefully crucial results  on local systems in \cite{G-H-T},  
with which rational homotopy theory for non-simply connected spaces is developed. By combining those ingredients with Kihara's model category structure on $\mathsf{Diff}$,  
we make a framework of rational and real homotopy theory for diffeological spaces. Theorem \ref{thm:summary} is proved in this section. 

Section \ref{sect:LocalSystems} addresses examples of local systems associated with relative Sullivan algebras. 
In particular, we see that a relative Sullivan model for a fibration in $\mathsf{Set}^{\Delta^{\text{op}}}$ gives rise to the fibrewise localization of 
a diffeological space via the realization functor $| \ |_D$; see Proposition \ref{prop:FromKS-ex} and Example \ref{ex:Alg_models}.

In Section \ref{sect:SS}, we construct the spectral sequence mentioned above in Theorem \ref{thm:SS}.  
We also treat toy examples concerning computations of the spectral sequence. 
Examples \ref{ex:an_example} and \ref{ex:an_example2} explain 
that the spectral sequence is applicable to a smooth CW-complex and a p-stratifold.  
Moreover, we consider a local system 
in $\mathsf{Top}$. Then Theorems \ref{thm:NewAssertions} and  \ref{thm:Top_SS} are proved. 

In Appendix \ref{app:appA}, 
we recall a {\it stratifold} introduced in \cite{Kreck} and a functor $k$ from the category $\mathsf{Stfd}$ 
of stratifolds to $\mathsf{Diff}$ considered in \cite{A-K}. The main result of this section (Theorem \ref{thm:p-stfd}) asserts that the functor $k$ assigns an adjunction space in $\mathsf{Diff}$ to a p-stratifold up to smooth homotopy. 

Appendix \ref{app:appB} is devoted to constructing a commutative algebraic model for 
the unreduced suspension of a connected closed manifold. 
This model is used in Example \ref{ex:compare} when comparing two spectral sequences obtained by Theorem \ref{thm:SS}.

\section{The category $\mathsf{Diff}$ of diffeological spaces and de Rham complexes}\label{sect:Diff}

In this section, we summarize some background on diffeological spaces. A good reference for diffeology is the book \cite{IZ}. Moreover, we review the Souriau--de Rham complex, the singular de Rham complex in diffeology and the model structure on the category of 
diffeological spaces mentioned in Introduction. 

\subsection{Diffeological spaces} \label{sect:diffspaces} 
We begin by recalling the definition of a diffeological space. Afterward, some examples of diffeological spaces are given.  

\begin{defn}
Let $X$ be a set. A set  $\D$ of functions $U \to X$ for each open subset $U$ in ${\mathbb R}^n$ and for each $n \in {\mathbb N}$ 
is a {\it diffeology} of $X$ if the following three conditions hold: 
\begin{enumerate}
\item (Covering) Every constant map $U \to X$ for all open subset $U \subset {\mathbb R}^n$ is in $\D$;
\item (Compatibility) If $U \to X$ is in $\D$, then for any smooth map $V \to U$ from an open subset $V$ of ${\mathbb R}^m$, the composite 
$V \to U \to X$ is also in $\D$; 
\item
(Locality) If $U = \cup_i U_i$ is an open cover and $U \to X$ is a map such that each restriction $U_i \to X$ is in $\D$, 
then the map $U \to X$ is in $\D$. 
\end{enumerate}
\end{defn}

In what follows, we may call an open subset of $\R^n$ a {\it domain}. 
A {\it diffeological space} $(X, \D)$ is comprised of a set $X$ and a diffeology $\D$ of $X$. 
A map from a domain to $X$ and an element of a diffeology $\D$ are called a {\it parametrization} and a {\it plot} of $X$, respectively. 
Let $(X, \D^X)$ and $(Y, \D^Y)$ be diffeological spaces. A map $X \to Y$ is {\it smooth} if for any plot $p \in \D^X$, the composite 
$f\circ p$ is in $\D^Y$. 
All diffeological spaces and smooth maps form a category $\mathsf{Diff}$. 

\begin{ex}\label{ex:manifolds_subsets_products}  (1) 
Let $M$ be a manifold. Then, the underlying set and the {\it standard diffeology} $\D^M$ give rise to a diffeological space $(M, \D^M)$, where 
$\D^M$ is defined to be the set of all smooth maps $U \to M$ from domains to $M$ in the usual sense. We have a functor $m : \mathsf{Mfd} \to \mathsf{Diff}$ from the category of manifolds defined by $m(M) = (M, \D^M)$. This functor is fully faithful; see, for example, \cite[2.1 Example]{B-H}. 

(2) For a diffeological space $(X, \D^X)$ and a subset $A$ of $X$, we define $\D^X(A)$ by 
$\D^X(A):= \{ p : U \to A \mid U \ \text{is a domain and} \ i\circ p \in \D^X\}$, where $i : A \to X$ is the inclusion. Then, the set $\D^X(A)$ is a diffeology of $A$, which is called the 
{\it sub-diffeology}. We call $(A, D^X(A))$ a 
{\it diffeological subspace} of $X$. 

(3) 
Let $\{(X_i, \D_i) \}_{i \in I}$ be a family of diffeological spaces. Then, the product $\Pi_{i\in I}X_i$ has a diffeology $\D$, called the {\it product diffeology}, defined to be the set of all parameterizations $p :  U \to \Pi_{i\in I}X_i$ such that $\pi_i\circ p$ are plots of $X_i$ for each $i \in I$, where $\pi_i : \Pi_{i\in I}X_i \to X_i$ denotes the canonical projection.

(4) Let $(X, \D^X)$ and $(Y, \D^Y)$ be diffeological spaces. We consider the set $C^\infty(X, Y)$ of all smooth maps from $X$ to $Y$. The {\it functional diffeology} is defined to be 
the set of parametrizations $p : U \to C^\infty(X, Y)$ whose adjoints $ad(p) : U \times X \to Y$ are smooth.  
\end{ex}

A p-stratifold in the sense of Kreck \cite{Kreck} is regarded as an adjunction space endowed with the quotient diffeology described below; see Theorem \ref{thm:p-stfd} for more details. 

\begin{ex}\label{ex:finaldiff}
(1) Let ${\mathcal F} : =\{f_i : Y_i \to X\}_{i\in I}$ be a set of maps from diffeological spaces $(Y_i, \D^{Y_i})$ ($i \in I$) to a set $X$. 
Then a diffeology $\D^X$ of $X$ is defined to be the set of 
parametrizations $p : U \to X$ such that, for each $r \in U$, there exists an open neighborhood $V_r$ of $r$ in $U$ and a plot $p_i \in \D^{Y_i}$ with $f_i\circ p_i = p|_{V_r}$. 
We call $\D^X$ the {\it final diffeology} of $X$ with respect to ${\mathcal F}$. 

(2) For a family of diffeological spaces  $\{(X_i, \D_i) \}_{i \in I}$, the coproduct $\coprod_{i\in I}X_i$ has a diffeology $\D$, called the {\it sum diffeology}, defined by 
the final diffeology with respect to the set of canonical inclusions. 

(3) Let $(X, \D)$ be a diffeological space with an equivalence relation $\sim$. Then a difffeology of $X/\sim$, called the {\it quotient diffeology}, is defined by the final diffeology with respect to the quotient map $X \to X/\sim$. 
\end{ex}

With the constructions in Examples \ref{ex:manifolds_subsets_products}  and \ref{ex:finaldiff}, we see that the category $\mathsf{Diff}$ is complete, cocomplete and cartesian closed; 
see \cite[Theorem 5.25]{B-H}.

The category $\mathsf{Diff}$ is related to $\mathsf{Top}$ with adjoint functors. Let $X$ be a topological space. Then the {\it continuous diffeology} is defined by the family of continuous parametrizations $U \to X$. This yields a functor 
$C : \mathsf{Top} \to \mathsf{Diff}$. 

For a diffeological space $(M, \D_M)$, we say that a subset $A$ of $M$ is {\it D-open} if for every plot 
$p \in \D_M$, the inverse image $p^{-1}(A)$ is an open subset of the domain of $p$ equipped with the standard topology. The family of D-open subsets of $M$ defines a topology of $M$. Thus, by giving the topology to each diffeological space, we have a functor 
$D: \mathsf{Diff} \to \mathsf{Top}$ which is the left adjoint to $C$; see \cite{S-Y-H} for more details. The topology for a diffeological space $M$ is called the {\it D-topology} of $M$. 
\subsection{A model structure on $\mathsf{Diff}$}\label{sect:Model} 
Let ${\mathbb A}^{n}:=\left\{(x_0, ..., x_n) \in {\mathbb R}^{n+1} \middle| \ \sum_{i=0}^n x_i = 1 \right\}$
be the affine space 
equipped with the sub-diffeology of the manifold ${\mathbb R}^{n+1}$. 
Observe that ${\mathbb A}^{n}$ is diffeomorphic to the manifold ${\mathbb R}^n$ 
with the projection $p$ defined by $p(x_0, x_1, ...,x_n)  = (x_1, ..., x_n)$. We denote by $\Delta^n_{\text{sub}}$ 
the diffeological space, whose underlying set is the $n$-simplex $\Delta^n$, 
equipped with the sub-diffeology of the affine space ${\mathbb A}^{n}$. 

In order to explain a model category structure on $\mathsf{Diff}$, we recall the {\it standard simplices}. Let $\Delta^n$ be the $n$-simplex and $d^i : \Delta^{n-1} \to 
\Delta^n$ the affine map defined by skipping the $i$th vertex $v_i$ in the codomain. We define inductively the standard $n$-simplex $\Delta^n$ by the $n$-simplex endowed with the final diffeology with respect to maps $\{\varphi_i\}_{i=0, ..., n}$, where $\varphi_i : \Delta^{n-1}\times [0, 1) \to \Delta^n$ is a map defined by $$\varphi_i(x, t) = (1-t)v_i + td^i(x)$$ for each $i$; see Example \ref{ex:finaldiff} (1).  Observe that 
$\Delta^n = \Delta_{\text{sub}}^n$ for $n \leq 1$ by definition. 
We refer the reader to \cite[Section 1.2]{Kihara1} for details of the standard simplices and their axiomatic treatment. The $k$th {\it horn} $\Lambda_k^n$ is a subspace of $\Delta^n$ with the sub-diffeology 
defined by 
$\Lambda_k^n :=\{ (x_0, ..,x_n) \in \Delta^n \mid x_i = 0 \ \text{for some} \ i\neq k \}$. 

Moreover, the singular simplex functor $S^D( \ \! )$ and the realization functor $| \ |_D$ in \cite[1.2]{Kihara} are defined by 
\[
S^D_n(X): = {\mathsf{Diff}}(\Delta^n, X) \ \ \ \text{and} \  \ \ 
|K|_D := \text{colim}_{\Delta \downarrow K}\Delta^n
\]
for a diffeological space $X$ and a simplicial set $K$, 
respectively.
We are ready to introduce the model structure on  $\mathsf{Diff}$ due to Kihara. 

\begin{thm}\label{thm:Kihara_Model} {\em (\cite[Theorem 1.3]{Kihara1})}
Define  a map $f : X \to Y$ in $\mathsf{Diff}$ to be 
\begin{itemize}
\item[{\em (1)}]
a weak equivalence if $S^D(f) : S^D(X) \to  S^D(Y)$ is a weak equivalence in $\mathsf{Set}^{\Delta^{\text{\em op}}}$, 
\item[{\em (2)}]
a fibration if the map $f$ has the right lifting property with respect to the inclusions 
$\Lambda_k^n \to \Delta^n$ for all $n > 0$ and $0\leq k\leq n$, and 
\item[{\em (3)}]
a cofibration if the map $f$ has the left lifting property with respect to all maps that are both fibrations and weak equivalences.
\end{itemize}

\medskip
\noindent
With these choices, $\mathsf{Diff}$ is a compactly generated model category whose object is always fibrant. 
\end{thm}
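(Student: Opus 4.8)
We only sketch the strategy, referring to \cite{Kihara1} for the full account. The plan is to exhibit the asserted structure as a \emph{cofibrantly generated} model structure transferred from the Kan--Quillen structure on $\mathsf{Set}^{\Delta^{\mathrm{op}}}$ along the adjunction $|\ |_D \dashv S^D$ of (\ref{eq:equivalence}), and to verify the hypotheses of a recognition theorem for cofibrantly generated model categories, e.g.\ \cite[Theorem 2.1.19]{Hovey}. One takes generating sets $I = \{\,\partial\Delta^n \hookrightarrow \Delta^n\,\}_{n\ge 0}$ and $J = \{\,\Lambda_k^n \hookrightarrow \Delta^n\,\}_{n>0,\ 0\le k\le n}$ in $\mathsf{Diff}$, with the standard simplices and horns of Section~\ref{sect:Model}; by definition the fibrations in the statement are precisely the $J$-injective maps. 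That $\mathsf{Diff}$ is complete and cocomplete has been recorded above, and the class $W$ of maps $f$ with $S^D(f)$ a weak equivalence satisfies the two-out-of-three property and is closed under retracts because $S^D$ is a functor and $W$ has these properties in $\mathsf{Set}^{\Delta^{\mathrm{op}}}$.

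One first checks the smallness hypotheses: the objects $\Delta^n$, $\partial\Delta^n$ and $\Lambda_k^n$ are finite colimits of standard simplices glued along the relevant (D-open) inclusions, so their hom-functors $\mathsf{Diff}(\,\cdot\,,-)$ commute with the colimits of the $\omega$-sequences of cellular maps built from $I$ and from $J$; this is exactly the content of the ``compactly generated'' clause and lets the small object argument run with countable sequences. In the same combinatorial vein, each horn inclusion $\Lambda_k^n \hookrightarrow \Delta^n$ is a finite composite of pushouts of maps in $I$, by the usual filtration of $\Delta^n$ through $\Lambda_k^n$ transported verbatim to $\mathsf{Diff}$, so $J\text{-cell} \subseteq I\text{-cof}$, and in particular every relative $J$-cell complex is a cofibration.

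The heart of the argument, and the step I expect to be the main obstacle, is that every relative $J$-cell complex is a weak equivalence, i.e.\ $J\text{-cell} \subseteq W$; with the preceding paragraph this yields $J\text{-cell} \subseteq W \cap I\text{-cof}$. The essential input is that the standard simplices are constructed via the maps $\varphi_i$ precisely so that each horn inclusion $\Lambda_k^n \hookrightarrow \Delta^n$ is a \emph{smooth strong deformation retract} in $\mathsf{Diff}$ (this fails for the sub-diffeology $\Delta^n_{\mathrm{sub}}$, which is the whole reason for introducing the nonstandard diffeology; see \cite[Section 1.2]{Kihara1}). Applying $S^D$ turns such an inclusion into a simplicial homotopy equivalence, since $S^D$ preserves finite products and carries a smooth homotopy to a simplicial one; moreover this behaviour is stable under the operations that build a relative $J$-cell complex: a pushout of a deformation-retract horn inclusion along an arbitrary map is again a deformation retract by the interval and gluing properties of the standard simplices, and the transfinite composition is handled by checking that $S^D$ commutes with the filtered colimit in question, a filtered colimit of trivial cofibrations in $\mathsf{Set}^{\Delta^{\mathrm{op}}}$ being a weak equivalence. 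Hence $S^D$ sends every relative $J$-cell complex to a weak equivalence, i.e.\ $J\text{-cell}\subseteq W$.

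It remains to match the two classes, which I would do by transporting lifting problems back to $\mathsf{Set}^{\Delta^{\mathrm{op}}}$. Since $|\ |_D$ is a left adjoint it preserves the colimits presenting $\partial\Delta^n$ and $\Lambda_k^n$ as simplicial sets, so $|\partial\Delta^n\hookrightarrow\Delta^n|_D$ and $|\Lambda_k^n\hookrightarrow\Delta^n|_D$ are, up to canonical isomorphism, the inclusions in $I$ and $J$; therefore a map $f$ in $\mathsf{Diff}$ lies in $I\text{-inj}$ (resp.\ $J\text{-inj}$) if and only if $S^D(f)$ is an acyclic Kan fibration (resp.\ a Kan fibration). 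In particular $I\text{-inj}\subseteq W\cap J\text{-inj}$, and $W\cap J\text{-inj}\subseteq I\text{-inj}$ because a Kan fibration that is a weak equivalence is an acyclic Kan fibration. These are precisely the remaining hypotheses of the recognition theorem, which then produces a cofibrantly generated model structure with $I$ the generating cofibrations, $J$ the generating trivial cofibrations, and $W$ the weak equivalences; its fibrations are the $J$-injective maps and its cofibrations are the maps with the left lifting property against the acyclic fibrations, which matches the statement. Finally, any object $X$ is fibrant: $X\to\ast$ lifts against each $\Lambda_k^n\hookrightarrow\Delta^n$ because that inclusion admits a smooth retraction by the crux step, along which any smooth map $\Lambda_k^n\to X$ extends.
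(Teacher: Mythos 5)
Your sketch is essentially the argument the paper points to: the theorem is quoted from Kihara \cite[Theorem 1.3]{Kihara1}, and the only proof commentary the paper gives is that the crucial ingredient is that each horn $\Lambda_k^n$ is a smooth deformation retract of the standard simplex $\Delta^n$ (and that this fails for $\Delta^n_{\text{sub}}$), which is exactly the crux you isolate within the standard transfer/recognition-theorem framework with generating sets $I$ and $J$. The remaining points you defer (identifying $|\partial\Delta[n]|_D$ and $|\Lambda[k,n]|_D$ with the diffeological boundary and horn, and the compactness needed for $S^D$ to commute with the relevant sequential colimits) are indeed where Kihara's technical work lies, so the proposal is a faithful outline of the same route.
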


While the construction of the standard simplex is somewhat complicated, 
each $k$th horn $\Lambda_k^n$ is a smooth deformation retract of the simplex $\Delta^n$; see \cite[Section 8]{Kihara}.
This is a significant part in proving Theorem \ref{thm:Kihara_Model}. 
On the other hand, the result \cite[Proposition A.2]{Kihara} yields that 
the $n$-simplex with the sub-diffeology of the affine space 
${\mathbb A}^n$ does {\it not} contain the $k$th horn as a retract. We observe that a weak equivalence, a fibration and a cofibration are different from those 
in \cite{C-W} defined by using the {\it affine} smooth singular simplex functor; see Remark \ref{rem:S^D} below.

As mentioned in the introduction, 
the result \cite[Theorem 1.5]{Kihara} asserts that the adjoint pair $(| \ |_D, S^D(\ ))$ in (\ref{eq:equivalence}) gives a Quillen equivalence between 
$\mathsf{Set}^{\Delta^{\text{op}}}$ and $\mathsf{Diff}$. Then, the result \cite[Proposition 1.3.17]{Hovey} allows us to obtain 
a pair of Quillen equivalences
\begin{equation}\label{eq:QE_based} 
\xymatrix@C45pt@R10pt{
\mathsf{Set}^{\Delta^{\text{op}}}_{\ *}  
\ar@<1ex>[r]^-{ | \ |_D}
& \mathsf{Diff}_*  \ar@<1.2ex>[l]^-{S^D(\ \! )}_-{\bot}   \\
}
\end{equation}
between the category $\mathsf{Set}^{\Delta^{\text{op}}}_{ \ *}$ of pointed simplicial sets with the pointed model structure and $\mathsf{Diff}_*$ the category of pointed diffeological spaces with the pointed model structure. 
Thus, in order to define a {\it fibrewise rational} diffeological space in $\mathsf{Diff}_*$, it suffices to work in the homotopy category of simplicial sets. 
We observe that for each diffeological space $X$, the simplicial set $S^D(X)$ is a Kan complex; 
see \cite[Lemma 9.4 (1)]{Kihara1}. 

\begin{rem}\label{rem:S^D}
The functor $S^D( \ )$ used above is different from the singular simplex functor $S^D( \ )_{\text{aff}}$ in \cite{C-W}.  
In constructing the latter functor, the affine space ${\mathbb A}^p$ is used instead of the standard simplex $\Delta^p$ mentioned above; that is, 
the simplex $S^D(X)_{\text{aff}}:=\{ S^D_p(X)_{\text{aff}}\}$ for a diffeological space $X$ is defined by $S^D_p(X)_{\text{aff}} =\mathsf{Diff}({\mathbb A}^p, X)$. 

However, the inclusion 
$\Delta^p \to {\mathbb A}^p$ induces a quasi-isomorphism between the singular chain complex for $S^D( \ )_{\text{aff}}$ and 
that for $S^D( \ )$; see the paragraph before \cite[Theorem 5.4]{K2020}, the comments in the end of 
\cite[Section 5]{K2020} and also \cite[Remark 3.9]{Kihara}. 
Very recently, Kihara \cite{Kihara2022} shows that the inclusion indeed gives rise to a weak equivalence $S^D(X)_{\text{aff}} \to S^D(X)$ in $\mathsf{Set}^{\Delta^{\text{op}}}$ for every diffeological space $X$. 
\end{rem}

\subsection{De Rham complexes}\label{sect:deRham}
This section serves as an introduction to the original de Rham complex due to Souriau \cite{So} and the simplicial de Rham complex \cite{K2020} for a diffeological space. 

Let $(X, \D^X)$ be a diffeological space. 
Given an open subset $U$ of ${\mathbb R}^n$, we define  $\D^X(U)$ by the set of plots with $U$ as the domain.  
Let $\Lambda^*(U)$ be the usual de Rham complex of $U$; that is, 
$\Lambda^*(U) = \{h : U \longrightarrow \wedge^*(\oplus_{i=1}^{n} {\mathbb R}dx_i ) \mid h \ \text{is smooth}\}$. Let $\mathsf{Open}$ denote the category consisting of open subsets of Euclidian spaces and smooth maps between them.  
We can regard $\D^X( \ )$ and $\Lambda^*( \ )$  as functors from $\mathsf{Open}^{\text{op}}$ to $\mathsf{Sets}$ the category of sets. 

A $p$-{\it form} is a natural transformation from $\D^X( \ )$ to $\Lambda^*( \ )$. Then, the complex $\Omega^*(X)$ called the {\it Souriau--de Rham complex} is defined to be the 
cochain algebra consisting of $p$-forms for $p\geq 0$; that is, $\Omega^*(X)$ is the direct sum of 
\[
\Omega^p(X) := \Set{
\xymatrix@C35pt@R10pt{
\mathsf{Open}^{\text{op}} \rtwocell^{\D^X}_{\Lambda^p}{\hspace*{0.2cm}\omega} & 
\mathsf{Sets} }
| \omega \ \text{is a natural transformation}
}
\]
with the cochain algebra structure induced by that of $\Lambda^*(U)$ pointwisely. 
The Souriau--de Rham complex is certainly a generalization of the usual de Rham complex of a manifold as mentioned below. 

\begin{rem}\label{rem:tautological_map} 
Let $M$ be a manifold and $\Omega_{\text{deRham}}^*(M)$ the usual de Rham complex of $M$. 
We recall the {\it tautological map} $\nu : \Omega_{\text{deRham}}^*(M) \to \Omega^*(M)$ defined by 
$$
\nu(\omega) = \{ p^*\omega\}_{p \in {{\mathcal D}^{M}}},
$$
where $\D^M$ is the standard diffeology of $M$. 
It follows from \cite[Theorem 2.1]{H-V-C} that $\nu$ is an isomorphism of CDGA's. 
\end{rem}

One might expect the de Rham theorem that gives an isomorphism between the de Rham cohomology $H^*(\Omega^*(X))$ and the singular cohomology $H^*(S^D(X); \R)$. 
However, we have to pay attention to the fact that, in general, the de Rham theorem does {\it not} hold for the Souriau--de Rham complex. In fact, for the irrational torus, which is recalled in Example \ref{ex:T}, there is no such an isomorphism; 
see \cite[Exercise 119]{IZ} and \cite[Remark 2.9]{K2020} for details. 

In \cite{I-I}, Iwase and Izumida proved the de Rham theorem for diffeological spaces 
introducing {\it cubic} de Rham complex instead of the Souriau--de Rham complex. We mention that the strategy is by means of a Mayer--Vietoris exact sequence. 
Here, we recall from \cite{K2020} another de Rham complex for our purpose.  

Let $(A_{DR}^*)_\bullet$ be the simplicial de Rham complex 
defined by $(A^*_{DR})_n := \Omega^*({\mathbb A}^{n})$ for each $n\geq 0$. This is used in developing real homotopy theory for diffeological spaces in the next section; see Remark \ref{rem:R-localdiffSp}.
Moreover, for a diffeological space $X$, we define a map 
\[
\alpha : \Omega^*(X) \to A_{DR}^*(S^D(X)_{\text{aff}}):= \mathsf{Set}^{\Delta^{\text{op}}}(S^D(X)_{\text{aff}}, (A_{DR}^*)_\bullet)
\]
of cochain algebras by $\alpha(\omega)(\sigma) = \sigma^*(\omega)$; see Remark \ref{rem:S^D} for the functor 
$S^D( \ )_{\text{aff}}$.  
We call the complex $A_{DR}^*(S^D(X)_{\text{aff}})$ and the map $\alpha$ the {\it singular} de Rham complex and the {\it factor map} for $X$, respectively. 

It is worth mentioning that the de Rham theorem for the singular de Rham complex holds 
in $\mathsf{Diff}$. More precisely, we have

\begin{thm}\cite[Theorem 2.4 and Corollary 2.5]{K2020} 
Let $X$ be a diffeological space $X$. Then there is a natural integration map from $A_{DR}^*(S^D(X)_{\text{\em aff}})$ to the cochain complex of $S^D(X)_{\text{\em aff}}$ with coefficients in $\R$ which induces an isomorphism of algebras on cohomology. 
\end{thm}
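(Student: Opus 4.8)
The plan is to build the integration map as the composite of several standard comparisons, reducing everything to the classical polynomial de Rham situation over $\Q$ (or $\R$) for simplicial sets. First I would recall that for the simplicial polynomial de Rham algebra $(A_{PL}^*)_\bullet$ one has, for every simplicial set $K$, the classical integration (fibre-integration / Stokes) quasi-isomorphism $\oint : (A_{PL})(K) \to C^*(K;\Q)$ into the normalized simplicial cochains, natural in $K$ and multiplicative up to coherent homotopy (the Sullivan--de Rham theorem, e.g.\ \cite[10.9 Theorem]{FHT} or \cite[Section 7]{B-G}); tensoring with $\R$ gives the same statement over $\R$. Next, by \cite[Corollary 3.5]{K2020} there is a natural quasi-isomorphism of simplicial CDGA's relating $(A_{PL}^*)_\bullet\otimes\R$ and $(A_{DR}^*)_\bullet$, hence a natural quasi-isomorphism $(A_{PL})(K)\otimes\R \to (A_{DR})(K)$ for each simplicial set $K$ (again by \cite[Corollary 3.5]{K2020} stated there). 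Composing, we obtain for every simplicial set $K$ a natural map $\oint_{DR} : A_{DR}^*(K) \to C^*(K;\R)$ inducing an isomorphism on cohomology; applying this with $K = S^D(X)_{\text{aff}}$ yields the desired integration map and the de Rham isomorphism for $A_{DR}^*(S^D(X)_{\text{aff}})$.

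The key steps, in order, are: (i) set up the normalized cochain complex $C^*(S^D(X)_{\text{aff}};\R)$ and recall its CDGA structure up to homotopy (or pass to the $E_\infty$-structure, since for the cohomology isomorphism only the multiplicative structure on homology is needed); (ii) define, simplex by simplex, the integration pairing $\langle \omega, \sigma\rangle := \int_{\Delta^n}\sigma^*\omega$ for $\omega\in A_{DR}^n(S^D(X)_{\text{aff}})$ giving an $n$-cochain, and verify it is a cochain map using Stokes' theorem on the affine simplices $\mathbb{A}^n$ together with the simplicial face and degeneracy identities --- this is exactly the classical computation, now carried out with smooth forms on $\mathbb{A}^n$ rather than polynomial forms on $\Delta^n_{\text{alg}}$; (iii) prove it is a quasi-isomorphism by the standard acyclic-models / extended-simplex argument: both $A_{DR}^*((\Delta[n])_\bullet)$ and $C^*(\Delta[n];\R)$ compute the cohomology of a point, the functors are represented on the simplex category, and a Mayer--Vietoris / colimit argument over the simplices of $S^D(X)_{\text{aff}}$ (equivalently, a skeletal induction using that $A_{DR}$ sends colimits of simplicial sets to limits) promotes the local statement to a global one; (iv) check multiplicativity on cohomology, again by acyclic models since both sides carry natural product structures agreeing on the point. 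Alternatively, one may shortcut (ii)--(iv) entirely by citing \cite[Corollary 3.5]{K2020} for the zig-zag $(A_{PL})(-)\otimes\R \simeq (A_{DR})(-)$ and then invoking the classical theorem for $(A_{PL})(-)$ verbatim; I would mention both routes and likely take the latter for brevity.

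The main obstacle is naturality and multiplicativity handled simultaneously: the integration map $\oint$ in the polynomial setting is a map of cochain complexes but only a multiplicative map up to a natural chain homotopy, so to state ``isomorphism of algebras on cohomology'' cleanly one must either (a) pass through Thom--Whitney / $E_\infty$ refinements, or (b) observe that on cohomology the homotopy-multiplicativity of $\oint$ is enough to get a genuine algebra map, and that the comparison $(A_{PL})(K)\otimes\R\to (A_{DR})(K)$ from \cite{K2020} is a strict CDGA quasi-isomorphism, so the composite induces an honest algebra isomorphism on $H^*$. A secondary technical point is justifying step (iii) in the diffeological setting: one needs that $S^D(X)_{\text{aff}}$, being a simplicial set, is a colimit of its simplices and that $A_{DR}(-)=\mathsf{Set}^{\Delta^{\text{op}}}(-,(A_{DR}^*)_\bullet)$ turns this colimit into a limit, which is formal, plus the extendability (Kan-type lifting) of $(A_{DR}^*)_\bullet$ --- the analogue of \cite[10.(c)]{FHT} --- which follows because each $\mathbb{A}^n\cong\R^n$ is smoothly contractible and restriction maps along face inclusions admit smooth sections, so surjectivity of the relevant restriction maps on forms holds; I expect this extendability verification to be the one place requiring a genuine (if short) argument rather than a citation.
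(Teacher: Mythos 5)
This statement is not proved in the present paper at all: it is quoted verbatim from \cite[Theorem 2.4 and Corollary 2.5]{K2020}, so the only ``proof'' here is a citation. Your direct route (steps (i)--(iv)) is essentially the argument of that reference and of the classical simplicial de Rham theorem: define $\langle\omega,\sigma\rangle=\int_{\Delta^n}\sigma^*\omega$, check the cochain condition with Stokes on $\mathbb{A}^n$, verify that $(A_{DR}^*)_\bullet$ is extendable and that each $(A_{DR}^*)_n=\Omega^*(\mathbb{A}^n)$ has the cohomology of a point (via the tautological isomorphism with the ordinary de Rham complex of $\R^n$), and then run the standard comparison argument, with multiplicativity on cohomology handled as in the polynomial case. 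Your identification of extendability as the one point needing a genuine check is accurate; it is exactly what is verified in \cite{K2020} before Corollary 3.5, and admissibility of $(A_{DR}^*)_\bullet$ is also invoked in Section 3 of the present paper.

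The one genuine problem is the ``shortcut'' you say you would likely take. The comparison of \cite[Corollary 3.5]{K2020} is a quasi-isomorphism relating $(A_{PL})(K)\otimes\R$ and $(A_{DR})(K)$, with the map pointing \emph{into} $(A_{DR})(K)$ (it is induced by a morphism of simplicial CDGA's $(A_{PL}^*)_\bullet\otimes\R\to(A_{DR}^*)_\bullet$, essentially inclusion of polynomial forms into smooth forms). Composing it with the classical $\oint:(A_{PL})(K)\otimes\R\to C^*(K;\R)$ therefore does not yield a map \emph{from} $A_{DR}^*(S^D(X)_{\text{aff}})$ to cochains; at best you get a zig-zag and hence an abstract isomorphism of cohomology algebras. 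Since the theorem explicitly asserts the existence of a natural \emph{integration map} out of $A_{DR}^*$, the shortcut proves a weaker statement. You should commit to the direct construction (ii)--(iv); the zig-zag can then be used, if you like, only as an independent check that the two cohomologies agree. With that correction the proposal is sound and coincides with the approach of \cite{K2020}.
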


In view of Remark \ref{rem:S^D}, we also establish the de Rham theorem for the singular de Rham complex $A_{DR}^*(S^D(X))$ for {\it every} diffeological space $X$. 
Furthermore, the factor map behaves well for particular  diffeological spaces.

\begin{prop}\label{prop:factor_map}\cite[Theorem 2.4]{K2020} 
Let $X$ be a manifold, finite dimensional smooth CW complex or a diffeological space which comes from 
a $p$-stratifolds via the functor $k$ mentioned in Appendix \ref{app:appA}. Then {\em (i)} the factor map for $X$ is a quasi-isomorphism and {\em (ii)} the de Rham theorem holds for the Souriau--de Rham complex of $X$. 
\end{prop}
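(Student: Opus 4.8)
The plan is to treat the two assertions together, since (ii) follows from (i) once we combine it with the de Rham theorem for the singular de Rham complex stated just above (the natural integration map $A_{DR}^*(S^D(X)) \to C^*(S^D(X);\R)$ is a quasi-isomorphism of algebras for every diffeological space $X$). Indeed, given (i), the composite $\Omega^*(X) \xrightarrow{\alpha} A_{DR}^*(S^D(X)_{\mathrm{aff}}) \to C^*(S^D(X)_{\mathrm{aff}};\R)$ is a quasi-isomorphism, and by Remark \ref{rem:S^D} the inclusion $\Delta^p \hookrightarrow {\mathbb A}^p$ induces a weak equivalence $S^D(X)_{\mathrm{aff}} \to S^D(X)$, hence an isomorphism on cohomology with $\R$-coefficients; composing, one identifies $H^*(\Omega^*(X))$ with $H^*(S^D(X);\R)$, which is (ii). So the whole content is the quasi-isomorphism claim (i) for the three classes of spaces.

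For $X$ a manifold, I would first invoke Remark \ref{rem:tautological_map}: the tautological map $\theta : \Omega_{\mathrm{deRham}}^*(M) \to \Omega^*(M)$ is an isomorphism of CDGA's, so it suffices to check that $\alpha \circ \theta$ is a quasi-isomorphism. This is the classical comparison between the ordinary de Rham complex of a manifold and the (affine) singular de Rham complex of its singular simplicial set, which can be proved by the usual local-to-global machinery: both sides send $M$ to a presheaf on a good open cover, both satisfy Mayer--Vietoris, both are homotopy invariant, and the map is an isomorphism on a point (and on convex/contractible pieces by a Poincaré-lemma computation for $A_{DR}^*$ — note $\Omega^*({\mathbb A}^n) \cong \Omega^*_{\mathrm{deRham}}(\R^n)$ again by the tautological map, so $A_{DR}^*$ restricted to affine simplices has the usual acyclicity). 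A partition-of-unity / Čech-to-derived-functor spectral sequence argument then upgrades this to all second-countable manifolds. This part is essentially already in \cite{K2020} (the reference cited), so I would state it as such rather than reprove it.

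For the finite-dimensional smooth CW complex and the $p$-stratifold cases, the strategy is induction on the skeletal (resp.\ stratification) filtration, using that each such space is built by a finite sequence of adjunctions $X_{k} = X_{k-1} \cup_{\varphi} (\coprod D^{n_k})$ along smooth attaching maps, with $X_0$ a disjoint union of manifolds. At each stage I would compare the Mayer--Vietoris sequence for the Souriau--de Rham complex (splitting $X_k$ as an open cover by a neighborhood of $X_{k-1}$ and the open cells) with the corresponding one for the singular de Rham complex of $S^D(X_k)$, using naturality of the factor map $\alpha$ and the five lemma; the inductive hypothesis handles $X_{k-1}$ and the disjoint union of balls (balls are manifolds), while the intersection deformation-retracts onto a union of spheres (again manifolds), so the base-case result applies there too. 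Here one needs that $S^D(-)$ carries such an adjunction of diffeological spaces to the corresponding homotopy pushout of Kan complexes — which is where Theorem \ref{thm:p-stfd} and the behaviour of the D-topology enter. The main obstacle, and the delicate point, is exactly this homotopical compatibility: one must know that the Souriau--de Rham complex satisfies a genuine Mayer--Vietoris sequence for the relevant open cover of the adjunction space (the irrational-torus caveat warns that $\Omega^*$ is not always well behaved), and that $\alpha$ is compatible with these sequences; verifying that the D-open cover used is fine enough for partitions of unity in the diffeological sense, and that the gluing lemma for $\Omega^*$ applies, is the crux. Once that is in place, the five-lemma induction closes, giving (i), and then (ii) as above.
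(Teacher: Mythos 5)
This proposition is not proved in the paper at all: it is imported verbatim from \cite[Theorem 2.4]{K2020}, so there is no in-paper argument to compare yours against. Your outline does follow the route taken in that reference — reduce (ii) to (i) via the integration quasi-isomorphism $A_{DR}^*(S^D(X)_{\mathrm{aff}})\to C^*(S^D(X)_{\mathrm{aff}};\R)$, handle manifolds through the tautological isomorphism $\theta$ plus the classical de Rham comparison, and then induct over the skeletal (resp.\ stratification) filtration with Mayer--Vietoris and the five lemma — so the shape of the argument is right, and you correctly identify where the danger lies.

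That said, as submitted this is a plan rather than a proof, and the step you yourself flag as ``the crux'' is exactly the one that cannot be waved through: the Souriau--de Rham complex is a limit (a natural transformation of presheaves on $\mathsf{Open}$), not a complex of sections of a soft sheaf on $X$, so a Mayer--Vietoris sequence for a D-open cover of an adjunction space requires an actual construction of smooth partitions of unity subordinate to that cover — for stratifolds this comes from the bump-function axiom in Definition \ref{defn:stratifold}, and for smooth CW complexes from the D-topology statements of \cite{I}, but in neither case is it automatic (the irrational torus shows $\Omega^*$ can fail to see the singular cohomology precisely when such covers are unavailable). Two further points need care in your induction: a p-stratifold attaches a manifold $W$ with compact boundary along a collar, not merely a disjoint union of disks, so the ``intersection retracts onto spheres'' step should be replaced by a collar-neighborhood retraction as in Appendix \ref{app:appA}; and the homotopy invariance of $\Omega^*$ under these retractions must be the \emph{smooth} homotopy invariance (\cite[6.88]{IZ}), which is why Theorem \ref{thm:p-stfd} is needed to see the adjunction up to smooth homotopy equivalence. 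None of this is wrong in your sketch, but until those verifications are carried out the five-lemma induction does not close; for the purposes of this paper the correct move is simply to cite \cite[Theorem 2.4]{K2020}, as the text does.
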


\section{A candidate for a localized diffeological space}\label{sect:RHT_for_Diff}
In order to develop rational homotopy theory for diffeological spaces, we need notions of fibrewise local objects, fibrewise localizations in $\mathsf{Diff}$ and their  algebraic models. The first one is described in the paragraph before Theorem \ref{thm:summary}. 
The second one is introduced formally in the beginning of the following subsection. Afterward, we verify that the procedure of the fibrewise localizations for  topological spaces due to 
G\'omez-Tato, Halperin and Tanr\'e \cite{G-H-T} works well also for diffeological spaces. 
Then, we investigate more explicitly fibrewise localizations of diffeological spaces together with their algebraic models by means of local systems. 

\subsection{Local systems}\label{sect:local_systems} 
For a pointed connected fibrant simplicial set (Kan complex) $X$, we have a fibration of the form $\widetilde{X} \to X \to K(\pi_1(X), 1)$ in which $\widetilde{X}$ 
is the universal cover of $X$; see, for example, \cite[III, 1 and VI, 3]{G-J}. 
We assume that $\widetilde{X}$ is of finite type with respect to rational cohomology; that is, 
$\text{dim}\ \!H^i(\widetilde{X}; \Q)< \infty$ for each $i \geq 0$.  
Let $X_{\mathbb Q}$ denote the {\it fibrewise rationalization} 
of $X$ in the sense of Bousfield and Kan \cite[Chapter I \S 8]{B-K}. By definition, the rationalization fits into  
the commutative diagram
\[
\xymatrix@C35pt@R12pt{
\widetilde{X}_{\mathbb Q} \ar[r] & X_{\mathbb Q} \ar[r] & K(\pi_1(X), 1) \\ 
\widetilde{X} \ar[r]  \ar[u]^{\ell}  & X \ar[r] \ar[u] & K(\pi_1(X), 1) \ar@{=}[u]
}
\]
whose upper row is also a fibration and $\ell$ is the classical rationalization of the simply-connected simplicial set $\widetilde{X}$. 

Let $M$ be a pointed connected diffeological space. Then, the construction above is applicable to the simplicial set $S^D(M)$. 
We shall call the realization 
$|S^D(M)_{\mathbb Q}|_D$ the {\it fibrewise rationalization} of $M$ and denote it by 
$M_{\mathbb Q}$. 

Let $\pi^D_i(N)$ denote the $i$th smooth homotopy group of a diffeological space $N$; see \cite[Chapter 5]{IZ} and \cite[Section 3.1]{C-W}.
The model structure on $\mathsf{Diff}$ mentioned in Section \ref{sect:Model} implies that each diffeological space is fibrant and hence the counit 
$\varepsilon : |S^D(M)|_D \to M$ is a weak equivalence; see \cite[Proposition 1.3.12]{Hovey}. Therefore, the result \cite[Theorem 1.4]{Kihara1} allows us to deduce that 
$\pi_1^D(M_{\mathbb Q})\cong \pi_1^D(M)$ and 
$\pi_i^D(M_{\mathbb Q})\cong \pi_i^D(M)\otimes {\mathbb Q}$ for $i\geq 2$. 

In order to introduce simplicial CDGA's for explicit localizations of diffeological spaces, more notations and terminology are needed. 

\begin{defn} \label{defn:extendableOb}
A simplicial object $A$ in a category $\C$ is {\it extendable} if for any $n$, every subset ${\mathcal I} \subset \{0, 1, ..., n\}$ and any elements 
$\Phi_i \in A_{n-1}$ for $i  \in {\mathcal I}$ which satisfy the condition that $\partial_i\Phi_j = \partial_{j-1} \Phi_i$ for $i <j$, there exists an element 
$\Phi \in A_n$ such that $\Phi_i = \partial_i \Phi$ for $i \in {\mathcal I}$.  
\end{defn} 

The following result describes an important property of an extendable simplicial object. 

\begin{prop}\text{\em (\cite[Proposition 10.4]{FHT})}\label{prop:extendability} If $A$ is an extendable simplicial object and $L\subset K$ is an inclusion of simplicial sets, the natural map $A(K) \to A(L)$ is surjective.
\end{prop}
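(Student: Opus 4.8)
The plan is to prove the surjectivity of $A(K) \to A(L)$ by induction on the cells of $K$ not lying in $L$, reducing the general case to the single instance of extending an element of $A(\Delta^n)$ along the boundary inclusion $\partial\Delta^n \hookrightarrow \Delta^n$, which is exactly what extendability of $A$ provides. First I would recall that $A(K) = \mathsf{Set}^{\Delta^{\mathrm{op}}}(K, A)$ sends colimits in the variable $K$ to limits; in particular, if $K$ is obtained from a subcomplex $K'$ by attaching a single nondegenerate $n$-simplex along its boundary, so that
\[
\xymatrix@C30pt@R15pt{
\partial\Delta^n \ar[r] \ar[d] & K' \ar[d] \\
\Delta^n \ar[r] & K
}
\]
is a pushout, then applying $A(-)$ yields a pullback square, and hence $A(K) \to A(K')$ is surjective as soon as $A(\Delta^n) \to A(\partial\Delta^n)$ is surjective.

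Next I would verify that $A(\Delta^n) \to A(\partial\Delta^n)$ is surjective for every $n$, which is the heart of the matter and the only place extendability is used. By Yoneda, $A(\Delta^n) = A_n$, and an element of $A(\partial\Delta^n)$ is a compatible family of faces: concretely, elements $\Phi_0, \dots, \Phi_n \in A_{n-1}$ (the restrictions to the $n$ faces $d^i\Delta^{n-1}$ of $\partial\Delta^n$) satisfying the simplicial identity $\partial_i \Phi_j = \partial_{j-1}\Phi_i$ for $i < j$; here one must also check that the higher-dimensional compatibility conditions encoded in the simplicial set $\partial\Delta^n$ follow automatically from these, which is a standard fact since $\partial\Delta^n$ is the union of its top faces glued along their codimension-one faces. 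Taking ${\mathcal I} = \{0, 1, \dots, n\}$ in Definition \ref{defn:extendableOb}, extendability of $A$ produces $\Phi \in A_n$ with $\partial_i\Phi = \Phi_i$ for all $i$, i.e.\ a preimage in $A(\Delta^n)$. (For $n = 0$ the statement $A(\Delta^0) \to A(\emptyset)$ is trivially surjective since $A(\emptyset)$ is a one-point set.)

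To assemble the global statement, I would run the skeletal induction. Since every simplicial set is the colimit of its skeleta and $L \subset K$, one can exhibit $K$ as obtained from $L$ by successively attaching the nondegenerate simplices of $K$ not in $L$, dimension by dimension, each attachment being a pushout of a coproduct of boundary inclusions $\coprod \partial\Delta^n \hookrightarrow \coprod \Delta^n$ along the already-built subcomplex. At each stage $A(-)$ turns the pushout into a pullback, and the map on the $\Delta^n$ side is surjective by the previous paragraph (a product of surjections is surjective), so the induced map down to the previous stage is surjective. Composing — and passing to the limit over the tower of skeleta, where surjectivity is preserved because the transition maps are surjective and the diagrams are countable towers (or, more carefully, using that a filtered colimit of subcomplexes with surjective restriction maps again has surjective restriction) — gives surjectivity of $A(K) \to A(L)$.

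The main obstacle I anticipate is the bookkeeping in the last paragraph: making the "attach one nondegenerate simplex at a time" decomposition precise when $K \setminus L$ is infinite, and checking that surjectivity survives the passage to the colimit over skeleta. This is handled by the observation that an element of $A(L)$ together with a coherent choice of extensions over each skeleton assembles to an element of $A(K)$, since a simplicial map out of $K$ is determined by its restrictions to all skeleta; the compatibility needed is exactly that the chosen extension at stage $k+1$ restricts to the one at stage $k$, which is automatic from how the pullbacks are set up. The simplex-level step itself is immediate once the data of a map $\partial\Delta^n \to A$ is correctly identified with the face data in Definition \ref{defn:extendableOb}.
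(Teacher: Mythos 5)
Your proof is correct and is essentially the standard argument: the paper itself gives no proof, citing \cite[Proposition 10.4]{FHT}, and the proof there is exactly your skeletal induction, reducing to the surjectivity of $A(\Delta[n])\to A(\partial\Delta[n])$ and applying extendability with ${\mathcal I}=\{0,\dots,n\}$. Your identification of a map $\partial\Delta[n]\to A$ with a family $\Phi_0,\dots,\Phi_n\in A_{n-1}$ satisfying $\partial_i\Phi_j=\partial_{j-1}\Phi_i$ for $i<j$, and the passage to the limit over skeleta via surjective transition maps, are both sound.
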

 
In order to introduce  fibrewise localizations of topological spaces and diffeological spaces, we use an extendable simplicial CDGA which satisfies more involved conditions.  
Hereafter, let $\K$ denote a field of characteristic zero. A CDGA that we deal with is non-negatively graded. 
 
\begin{defn}\label{defn:admissible}{(\cite[Definition 3.2]{G-H-T})}
A simplicial CDGA  $A_\bullet$ is {\it admissible} if it satisfies the following conditions. 
\begin{enumerate}
\item[(i)] $A_0 = A_0^0 = \K$. 
\item[(ii)] For each $n$, there are elements $t_1, ..., t_n \in A_n^0$ such that 
$A_n = A_n^0\otimes_\K \wedge(dt_1, ..., dt_n)$. 
\item[(iii)] The simplicial algebra $H(A_\bullet)$ satisfies: $H^i(A_\bullet) = 0$ for $i\geq 1$, and $H^0(A_\bullet)$ is the constant simplicial algebra $\K$. 
\item[(iv)] The simplicial algebra $A_\bullet^i$ for $i\geq 0$ is extendable. 
\item[(v)] Let $n$ be a non-negative integer. If $f\in A_n^0$ satisfies $df = fw$ for some $w \in A_n^1$, then either $f=0$ or $f$ is invertible in $A_n^0$.  
\end{enumerate}
\end{defn}

For example, the simplicial polynomial de Rham complex  $(A_{PL}^*)_\bullet$ over ${\mathbb Q}$ is admissible. 
Moreover, 
the simplicial de Rham complex $(A^*_{DR})_\bullet$ mentioned in Section \ref{sect:deRham} is also admissible. These facts follow from 
\cite[Lemma 3.3]{G-H-T} and Remark \ref{rem:tautological_map}. We observe that the condition (v) in Definition \ref{defn:admissible} is used in proving the uniqueness of a {\it minimal $A$-algebra}; see the proof of \cite[Lemma 3.9]{G-H-T}. 
With the simplicial CDGA's $(A_{PL}^*)_\bullet$ and $(A^*_{DR})_\bullet$, we can form more explicitly a fibrewise localization of a diffeological space; see Theorem \ref{thm:A-localization} below. 
Our algebraic model for the fibrewise rationalization is constructed by making use of a {\it local system}. We recall the important notion. 

Let $\Delta[n]$ denote the standard $n$-simplex $\text{Hom}_{\Delta}( \ , [n])$, where $\Delta$ denotes the ordinal number category. 
For a simplicial set $K$, we may regard $K$ as a category whose objects are simplicial maps 
$\sigma : \Delta[n] \to K$ and whose morphisms $\alpha : \sigma \to \tau$ are the simplicial maps $\alpha : \Delta[\dim \sigma] \to \Delta[\dim \tau]$ with 
$\tau \circ \alpha = \sigma$.  

\begin{defn}\label{defn:LocalSystems}(\cite[Definition 1.1]{G-H-T} and \cite[12.15 Definition]{Halperin}) 
(1) A {\it local system} $E$ is a presheaf on a simplicial set $K$ with values in CDGA's.

(2) A morphism $\psi : E \to E'$ of local systems over $K$ is a morphism of presheaves whose image $\psi_\sigma : E_\sigma \to E_\sigma'$ is 
a morphisms of CDGA's for each $\sigma \in K$.  

(3) A morphism $\psi : E \to E'$ of local systems over $K$ is a {\it quasi-isomorphism} if for each $\sigma \in K$, 
the map $\psi_\sigma^* : H^*(E_\sigma) \to H^*(E_\sigma')$ induced by $\psi_\sigma$ is an isomorphism.  
\end{defn}

Let $A$ be an admissible simplicial CDGA, for example, $(A_{PL}^*)_\bullet$ and $(A^*_{DR})_\bullet$. In what follows, for a given simplicial set $K$,
we regard $A$ as a local system over $K$ defined by  
$A_\sigma := A_{\dim \sigma}$ for $\sigma \in K$. 

We recall the {\it global section functor} $\Gamma: \mathsf{CDGA}^{K^{\text{op}}} \to \mathsf{CDGA}$ defined by 
$\Gamma(E) = \mathsf{Set}^{K^{\text{op}}}(1, UE)$ with the forgetful functor $U : \mathsf{CDGA}^{K^{\text{op}}} \to \mathsf{Set}^{K^{\text{op}}}$; see \cite[12.16 Definition]{Halperin}. 
Observe that an element $\Phi$ in $\Gamma(E)$ 
assigns to each simplex $\sigma \in K$ 
an element $\Phi_\sigma \in E_\sigma$ and the function $\Phi$ is compatible with the face and degeneracy operations. 

Let $E$ be a local system over a simplicial set $K$ and $u : L \to K$ a simplicial map. Then, we define the {\it pullback} $E^u$ of $E$ by $(E^u)_\sigma = E_{u\circ \sigma}$ for $\sigma \in L$. 
We say that a local system $E$ is {\it extendable} if the restriction map 
$
\Gamma(E^\sigma) \to \Gamma(E^{\sigma \circ i})
$
is surjective for any simplicial map 
$\sigma : \Delta[n] \to K$, where $i : \partial \Delta[n] \to \Delta[n]$ denotes the inclusion.  

\begin{rem}\label{rem:Gamma}
It follows from \cite[12.27 Theorem]{Halperin} that the functor 
$\Gamma$ preserves quasi-isomorphisms between extendable local systems. We observe that for a simplicial map $\sigma : \Delta[n] \to K$, a morphism 
$\zeta : \Gamma(E^\sigma) \to E_\sigma$ of CDGA's defined by 
$\zeta(\Phi) = \Phi_{id : \Delta[n]\to \Delta[n]}$ is an isomorphism; see \cite[Definition 1.1]{G-H-T} for the contents in this and the previous two paragraphs. 
\end{rem}

A local system $E$ is called {\it locally constant} (resp. $1$-{\it connected}) if  $\alpha^* :  E_\tau \to E_\sigma$ is an isomorphism 
for each $\alpha : \sigma \to \tau$ in $K$ (resp. if $H^0(E_\sigma)=\K$ and $H^1(E_\sigma)=0$ for every $\sigma \in K$). For a local system  $E$, we define a local system $H(E)$ with trivial differentials by $H(E)_\sigma = H(E_\sigma)$ for $\sigma \in K$. We say that a local system $E$ is of {\it finite type} if 
$H^k(E_\sigma)$ is finite dimensional for each $k$ and 
$\sigma \in K$. 

\begin{defn}(\cite[Definition 3.4]{G-H-T})\label{defn:A-algebra}
An {\it $A$-algebra} $j_E : A \to E$ is a morphism of local systems for which $E$ is extendable and the system $H(E)$ is locally constant. An $A$-{\it morphism} is a morphism $\varphi : E \to E'$ of local systems such that $\varphi \circ j_E = j_{E'}$. 
\end{defn}

Following \cite[\S 3]{G-H-T}, we introduce a {\it minimal} $A$-algebra and recall an important  result on the notion. 

\begin{defn}\label{defn:minimalA}(\cite[Definitions 2.2 and 3.5]{G-H-T})
(1) A local system $(\wedge Y, D_0)$ with values in CDGA's is a $1$-{\it connected} $A^0$ {\it minimal model} if $Y$ is an $A^0$-module and there exists a $1$-connected Sullivan minimal algebra $(\wedge Z, d)$ such that, as differential graded $(A^0)^\sigma$-algebras, 
$(\wedge Y)^\sigma \cong (A^0)^\sigma\otimes_\K (\wedge Z, d)$ for $\sigma \in K$. The CDGA $(\wedge Z, d)$ is called a {\it representative Sullivan algebra} for $(\wedge Y, D_0)$. 

(2) An $A$-algebra $(A\otimes_{A^0}\wedge Y, D = \sum_{i\geq 0}D_i)$ with 
$D_i : A^*\otimes_{A^0}\wedge Y \to A^{*+i}\otimes_{A^0}\wedge Y$ is a $1$-{\it connected} $A$ {\it minimal model} if $(\wedge Y, D_0)$ is a $1$-connected $A^0$ minimal model. 
A representative Sullivan algebra for $(\wedge Y, D_0)$ is called a {\it representative Sullivan algebra} for the $A$ minimal model. 
\end{defn}

\begin{thm}\label{thm:miminal_models} {\em (}\cite[Theorem 3.12]{G-H-T}{\em )} Let  $j : A \to E$ be an $A$-algebra for which $E$ is $1$-connected and of finite type. 
Then there are a unique $1$-connected $A$ minimal model $(A\otimes_{A^0}\!\wedge Y, D)$ 
of finite type and an $A$-quasi-isomorphism 
$m : (A\otimes_{A^0}\!\wedge Y, D) \stackrel{\simeq}{\longrightarrow} E$. 
\end{thm}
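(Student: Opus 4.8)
\textbf{Proof proposal for Theorem \ref{thm:miminal_models}.} The plan is to follow the standard obstruction-theoretic construction of a minimal model, but carried out in the category of local systems over $K$, using extendability to globalize the step-by-step construction. First I would construct the $A^0$ minimal model $(\wedge Y, D_0)$ fibrewise: since $E$ is $1$-connected and of finite type, for each simplex $\sigma$ the CDGA $E_\sigma$ has a Sullivan minimal model, and one wants these to assemble compatibly. The key point is to build a representative Sullivan algebra $(\wedge Z, d)$ for $H(E)$ — which by local constancy of $H(E)$ is independent of $\sigma$ up to quasi-isomorphism — and set $(\wedge Y)^\sigma = (A^0)^\sigma \otimes (\wedge Z, d)$, where the differential $D_0$ is $\K$-linear over $(A^0)^\sigma$. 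Naturality in $\sigma$ follows because the construction of a minimal model is functorial up to homotopy, and the $1$-connectedness plus finite type hypotheses let one pin down the homotopies.

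Next I would construct the full differential $D = \sum_{i \ge 0} D_i$ and the $A$-quasi-isomorphism $m$ simultaneously, by induction on the word-length filtration of $\wedge Y$ (equivalently, on a basis of $Y$ ordered by degree and then by this filtration). At stage $n$, having defined $D$ and $m$ on $A \otimes_{A^0} \wedge Y_{<n}$, one chooses generators $Y_n$ so that $m$ induces an isomorphism on cohomology in degree $n$ and an injection in degree $n+1$; the obstruction to extending $D$ and $m$ over the new generators lives in a cohomology group of the relevant local system, and here is where Proposition \ref{prop:extendability} (via the extendability of $A$ and of $E$) is essential: it guarantees that a choice made on each simplex can be extended compatibly over faces, so that the local maps glue to an honest morphism of local systems. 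This is the mechanism by which the fibrewise construction becomes a global one. The finite type hypothesis ensures each stage only requires finitely many generators, so the induction is well-posed degreewise.

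For uniqueness, I would argue that any two $1$-connected $A$ minimal models equipped with $A$-quasi-isomorphisms to $E$ are connected by an $A$-isomorphism, again by the standard lifting-and-homotopy argument in Sullivan theory, transported to local systems. Concretely: given two such models $m : \Phi \to E$ and $m' : \Phi' \to E$, one lifts $m'$ through the quasi-isomorphism $m$ (using extendability of $E$ to perform the lifting simplexwise-compatibly) to get an $A$-morphism $\Phi' \to \Phi$, and the minimality of both sides — which forces the linear part of any $A$-self-map inducing an isomorphism on $H(-)$ to be invertible — upgrades this to an $A$-isomorphism. The role of condition (v) in Definition \ref{defn:admissible}, as the excerpt notes, enters precisely in controlling the $0$-degree part $A^0$ during this minimality argument, ensuring units behave correctly.

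The main obstacle I anticipate is the gluing step: ensuring that the fibrewise choices of extension of $D$ and $m$ over the new generators can be made into a genuine morphism of presheaves, i.e. strictly compatible with all face and degeneracy maps, rather than merely compatible up to homotopy on each simplex. This is exactly what extendability (Proposition \ref{prop:extendability} and the extendability of $E$ built into the definition of an $A$-algebra) is designed to handle — one runs the obstruction theory over the simplicial direction as well, extending over $\partial\Delta[n] \hookrightarrow \Delta[n]$ at each stage — but making the bookkeeping of this double induction (on the word-length filtration of $\wedge Y$ and on the simplicial dimension) precise is the technical heart of the proof. I would expect to lean heavily on the machinery of \cite[Chapters 12--14]{Halperin} for the simplicial obstruction theory and on the fact that $\Gamma$ preserves quasi-isomorphisms to transfer the resulting statement about global sections back to a statement about the local systems themselves.
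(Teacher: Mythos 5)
The paper does not prove Theorem \ref{thm:miminal_models}: it is quoted verbatim from \cite[Theorem 3.12]{G-H-T}, so there is no in-paper argument to compare against. Your outline --- fibrewise Sullivan construction of $(\wedge Y, D_0)$ from a single representative Sullivan algebra (using local constancy of $H(E)$ over the connected base $K$), inductive construction of the perturbation $D=\sum_{i\ge 0}D_i$ and of $m$ with extendability supplying the gluing over $\partial\Delta[n]\hookrightarrow\Delta[n]$, and uniqueness via lifting together with condition (v) of Definition \ref{defn:admissible} --- is essentially the strategy of the cited proof.
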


While an $A$-algebra $E$ certainly admits a minimal model, the construction is very hard in general even if the cohomology $H^*(E)$ is determined. We investigate a tractable minimal model under an appropriate condition in the next section. 

Let $A$ be an admissible simplicial CDGA and ${\mathcal A}_K$ the category of $A$-algebras over a connected simplicial set $K$.   
Let ${\mathcal S}_K$ be the category of simplicial sets over $K$. 
A morphism from $p : X \to K$ to $q : Y \to K$ is defined by a map $f : X \to Y$ in $\mathsf{Set}^{\Delta^{\text{op}}}$ with $q\circ f = p$.  
Moreover, we define ${\mathcal K}_K$ to be the full subcategory of Kan fibrations over $K$ with simply connected fibres; see \cite[the last paragraph in page 1514]{G-H-T}.  
Then, we recall the adjoint functors   
\[
\xymatrix@C20pt@R10pt{
\langle \  \rangle : {\mathcal A}_K^{\text{op}} \ar[r] & {\mathcal S}_K  \  \ \text{and} \  \ 
{\mathcal F}(\ ) : {\mathcal K}_K \ar[r] & {\mathcal A}_K^{\text{op}} 
}
\]
from \cite[Section 5, page 1515]{G-H-T}. The realization functor $\langle \  \rangle$ is defined by 
\begin{equation}\label{eq:realization}
\langle (E, j)  \rangle_n = \{ (\varphi, \sigma) \mid \sigma \in K_n, \varphi \in {\mathcal A}_{\Delta[n]}(E^\sigma, A_{\Delta[n]})  \}
\end{equation}
for an $A$-algebra $j : A \to E$, where $E^\sigma$ stands for the pullback of the local system $E$ over $K$ along the map $\sigma : \Delta[n] \to K$. 
Observe that $\varphi$ is a natural transformation.  For a Kan fibration $p : X \to K$, we define a local system 
${\mathcal F}(X, p)$ to be 
\begin{equation}\label{eq:fibrations}
{\mathcal F}(X, p)_\sigma = A(X^\sigma) := \mathsf{Set}^{\Delta^{\text{op}}}(X^\sigma, A_\bullet)
\end{equation}
for $\sigma \in K$, where $p^\sigma : X^\sigma\to \Delta[n]$ is the pullback of $p$ along $\sigma: \Delta[n] \to K$; see also Theorem \ref{thm:adjoint} below for the adjointness of the functors. Moreover, the $A$-algebra structure $j : A\to {\mathcal F}(X, p)$ is given by $j_\sigma =A(p^\sigma) : A_\sigma \to {\mathcal F}(X, p)_\sigma$. 
We may write $\F(X)$ for $\F(X, p)$ if the map $p$ is clear in the context. 

Thanks to Theorem \ref{thm:miminal_models}, we have a simply-connected 
$A$ minimal model of the form $(A\otimes_{A^0}\!\wedge Y, D) \stackrel{\simeq}{\longrightarrow} {\mathcal F}(X, p)$ 
for each object $p : X \to K$ in ${\mathcal K}_K$. 
Let ${\mathcal M}_K$ be the full subcategory of ${\mathcal A}_K$ consisting of  
$1$-connected $A$ minimal models of finite type and $\K$ the underlying field. 
Following the definition in \cite[Section 4]{G-H-T}, we introduce the homotopy relation between morphisms in ${\mathcal M}_K$. 
Let $I$ be the constant local system over $K$ defined by $I_\sigma = \wedge (t, dt)$, where $\deg t = 0$. 
Moreover, we define  $\varepsilon_0, \varepsilon_1 : I \to \K$ by $\varepsilon_i(t)= i$. 

\begin{defn} (\cite[Definition 4.1]{G-H-T})\label{defn:homotopy} (1) The {\it cylinder} for an $A$-algebra $E$ is the $A$-algebra $E\otimes_{\K} I$  with the retractions 
$\varepsilon_i = E\otimes_{\K}\varepsilon_i : E\otimes_{\K}I \to E$ for $i=0, 1$. 

(2) Let $\varphi_0, \varphi_1 : (A\otimes_{A^0}\wedge Y, D) \to E$ be morphisms of $A$-algebras with the source in ${\mathcal M}_K$. 
Then $\varphi_0$ and $\varphi_1$ are {\it homotopic} over $K$, denoted $\varphi_0 \simeq_K \varphi_1$, if there is a morphism 
$\Phi : (A\otimes_{A^0}\wedge Y, D) \to E\otimes_{\K}I$ of $A$-algebras such that $\varepsilon_i\Phi = \varphi_i$ for $i = 0, 1$. 
\end{defn}

The result \cite[Lemma 4.2]{G-H-T} shows that the homotopy relation above is an equivalence relation.

In what follows, we describe a procedure of the fibrewise localization for a diffeological space 
with an admissible simplicial CDGA $A$. That is completed in Section \ref{section:FR} below.
Let $M$ be a pointed connected diffeological space. The result \cite[Lemma 9.4.(1)]{Kihara1} yields that $S^D(M)$ is a Kan complex. 
Thus we have a Kan fibration 
\[
p : S^D(M) \to S^D(M)(1) = K(\pi_1(M), 1)
\] 
induced by the Moore-Postnikov tower 
$\{S^D(M)(n)\}_{n\geq 0}$ for $S^D(M)$; see \cite[VI, 3]{G-J} and \cite[Lecture 11]{J}.  
Then, the fibre of $p$ is regarded as the universal cover $\widetilde{S^D(M)}$ 
of $S^D(M)$. 

\begin{thm}\label{thm:A-localization} Under the setting mentioned above, 
suppose further that the $A$-cohomology $H^*(A(\widetilde{S^D(M)}))$ of the fibre $\widetilde{S^D(M)}$ of $p$ is of finite type. \\
{\em (1)} There is a $1$-connected $A$ minimal model 
$m: (A\otimes_{A^0}\wedge Y, D) \stackrel{\simeq}{\to} \F(S^D(M), p)$. \\
{\em (2)}  The minimal model $m$ in {\em (1)} gives rise to a fibrewise rationalization 
$ad(m) : S^D(M)  \to \langle (A\otimes_{A^0}\wedge Y, D) \rangle$ with the adjoint in Theorem \ref{thm:adjoint} below, 
which fits  into the commutative diagram
\[
\xymatrix@C30pt@R15pt{
\widetilde{S^D(M)}_{A} \ar[r] & \langle (A\otimes_{A^0}\wedge Y, D) \rangle \ar[r]^\pi & K(\pi_1(M), 1) \\ 
\widetilde{S^D(M)} \ar[r]  \ar[u]^{\ell}  & S^D(M)  \ar[r]_p \ar[u]_{ad(m)} & K(\pi_1(M), 1) \ar@{=}[u]
}
\]
consisting of two Kan fibrations $\pi$ and $p$, 
where $\widetilde{S^D(M)}_{A}$ denotes the  realization of the representative Sullivan algebra for $(A\otimes_{A^0}\wedge Y, D)$, $\ell$ is the usual localization and $\pi$ is the natural projection; see (\ref{eq:realization}) for the realization functor $\langle \  \rangle$. 
\end{thm}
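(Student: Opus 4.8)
The plan is to build the $A$-algebra attached to the Kan fibration $p\colon S^D(M)\to K(\pi_1(M),1)$, pass to its $A$ minimal model via Theorem \ref{thm:miminal_models}, and then transport this minimal model back through the adjunction of Theorem \ref{thm:adjoint} to obtain a map of simplicial sets over $K(\pi_1(M),1)$; the remaining work is to recognise that map, on fibres, as the classical $\K$-localization.

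For (1), I would first note that $\F(\ )$ lands in the category $\mathcal{A}_{K(\pi_1(M),1)}$ of $A$-algebras, so $j\colon A\to\F(S^D(M),p)$ is automatically an $A$-algebra. To apply Theorem \ref{thm:miminal_models} I must check that $\F(S^D(M),p)$ is $1$-connected and of finite type. For each $\sigma\colon\Delta[n]\to K(\pi_1(M),1)$ the pullback $X^\sigma$ of $p$ along $\sigma$ is weakly equivalent to the fibre $\widetilde{S^D(M)}$, because $\Delta[n]$ and the universal cover of $K(\pi_1(M),1)$ are contractible; since $A$ is admissible, $A(\ )$ sends this weak equivalence to a quasi-isomorphism, so by $(\ref{eq:fibrations})$ we get $H^*(\F(S^D(M),p)_\sigma)\cong H^*(A(\widetilde{S^D(M)}))$ for every $\sigma$. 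Simple connectivity of $\widetilde{S^D(M)}$ then forces $H^0=\K$ and $H^1=0$ in each stalk, giving $1$-connectedness, and the standing hypothesis that $H^*(A(\widetilde{S^D(M)}))$ is of finite type gives finite type. Theorem \ref{thm:miminal_models} then delivers the unique $1$-connected $A$ minimal model $(A\otimes_{A^0}\wedge Y,D)$ of finite type and the $A$-quasi-isomorphism $m$, which is (1).

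For (2), applying the adjunction of Theorem \ref{thm:adjoint} to $m$ produces $ad(m)\colon S^D(M)\to\langle(A\otimes_{A^0}\wedge Y,D)\rangle$ in $\mathcal{S}_{K(\pi_1(M),1)}$, so $\pi\circ ad(m)=p$; the right-hand square of the diagram commutes for this reason, and the left-hand square will commute once $ad(m)$ is seen to carry the fibre of $p$ into that of $\pi$ via $\ell$. The bottom row is a fibration by construction of $p$, and $\pi$ is a Kan fibration by the realization results of \cite{G-H-T}. To identify the fibres, I would restrict the minimal local system along a constant map to the basepoint, which collapses it to the constant simplicial CDGA $A\otimes(\wedge Z,d)$ with $(\wedge Z,d)$ the representative Sullivan algebra; by $(\ref{eq:realization})$ the fibre of $\pi$ is then the Sullivan spatial realization of $(\wedge Z,d)$, which we name $\widetilde{S^D(M)}_{A}$. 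Since the restriction of $m$ over the basepoint is a quasi-isomorphism, $(\wedge Z,d)$ is a minimal Sullivan model for $\widetilde{S^D(M)}$ with $\K$-coefficients; by the Bousfield--Gugenheim realization theorem \cite{B-G} its realization is the $\K$-localization $\widetilde{S^D(M)}_{\K}$, and compatibility of the adjunction with the pullback to the basepoint identifies the fibre restriction of $ad(m)$ with the localization map $\ell$. A fibration-preserving map over $K(\pi_1(M),1)$ that induces the $\K$-localization on fibres is a fibrewise $\K$-localization in the sense of \cite[Chapter I \S 8]{B-K} (the fibrewise rationalization when $A=(A_{PL}^*)_\bullet$), which is (2).

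The hard part will be the fibrewise identification in the last step: showing that restricting the realization $\langle(A\otimes_{A^0}\wedge Y,D)\rangle$ over a vertex recovers the Sullivan realization of the representative Sullivan algebra, that $\pi$ is a Kan fibration, and that $ad(m)$ consequently induces the classical localization on fibres. This draws on the structure theory of $A$-algebras and their realizations in \cite{G-H-T} and \cite{Halperin} together with classical Sullivan--de Rham theory \cite{B-G}; it is essentially formal once those tools are in place, and the only diffeological input is that $S^D(M)$ is a Kan complex \cite{Kihara1}, which is what makes the Moore--Postnikov fibration $p$ available in the first place.
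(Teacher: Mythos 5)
Your proposal is correct and follows essentially the same route as the paper: the paper's proof of (1) simply cites \cite[Proposition 5.3 (2)]{G-H-T} for the $1$-connectedness and finite type of $\F(S^D(M),p)$ (which is exactly the stalk-wise weak-equivalence computation you carry out by hand) before invoking Theorem \ref{thm:miminal_models}, and its proof of (2) cites \cite[Theorems 5.5 and 5.10]{G-H-T} together with Theorem \ref{thm:adjoint}, which is precisely the fibre-identification and fibrewise-localization argument you sketch. No gaps; you have just unpacked the cited results rather than quoting them.
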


\begin{proof} 
In view of \cite[Proposition 5.3 (2)]{G-H-T}, we see that the $A$-algebra $\F(S^D(M), p)$ is $1$-connected and of finite type. Then Theorem \ref{thm:miminal_models} yields the assertion (1). 
By virtue of \cite[Theorems 5.5 and 5.10]{G-H-T} and the adjointness in Theorem \ref{thm:adjoint} below, we have the assertion (2). 
\end{proof}


\subsection{The fibrewise localization of a diffeological space}\label{section:FR} Under the same notation as in Theorem \ref{thm:A-localization},  if 
$A$ is the simplicial polynomial de Rham complex $(A_{PL}^*)_\bullet$, 
then the realization produces the fibrewise rationalization 
\begin{equation}
\xymatrix@C30pt@R10pt{
f\ell :  |S^D(M)|_D \ar[r]^-{|ad(m)|_D} & | \langle  (A\otimes_{A^0}\wedge Y, D) \rangle |_D =:M_\text{fib$\Q$}
}
\end{equation}
of a given pointed connected diffeological space $M$. The $A$-algebra $(A\otimes_{A^0}\wedge Y, D)$  is called 
the {\it minimal \text{\em (}local system\text{\em )}} model for $M$. 
We observe that the counit  $|S^D(M)|_D \stackrel{\sim}{\to} M$ is a weak homotopy equivalence. In fact, by definition, each object in $\mathsf{Set}^{\Delta^{\text{op}}}$ is cofibrant. 
Moreover, every object in $\mathsf{Diff}$ is fibrant; see Theorem \ref{thm:Kihara_Model}. Then, the assertion follows from the fact that the pair $(| \ |_D, S^D( \ ))$ gives a Quillen equivalence; see \cite[Theorem 1.5]{Kihara}. 


\begin{rem}
The general theory of model categories enables us to deduce that the counit $|S^D(M)|_D\to M$ is a homotopy equivalence if $M$ has the smooth homotopy type of a  cofibrant object in $\mathsf{Diff}$. We remark that the class of such diffeological spaces contains the diffeomorphism group for
a compact manifold with a suitable smooth structure; see \cite[Theorem 11.1 and Corollary 11.22]{Kihara} for more details. 
\end{rem}

\begin{rem}\label{rem:weakEQ} Let $f : M \stackrel{\sim}{\to} M'$ be a weak equivalence between pointed connected diffeological spaces; see Theorem \ref{thm:Kihara_Model}. 
Then, the map $f$ induces a weak equivalence between their universal covers. Therefore, the representative Sullivan algebras of $1$-connected $(A_{PL})_\bullet$ 
minimal models for $M$ and $M'$ are 
quasi-isomorphic. It follows from \cite[Theorems 3.7 and 3.8]{G-H-T} that $1$-connected $(A_{PL})_\bullet$ minimal models for $M$ and $M'$ are isomorphic to each other and hence 
$M_\text{fib$\Q$}$ is diffeomorphic to $M'_\text{fib$\Q$}$ in $\mathsf{Diff}_*$.
\end{rem}

\begin{ex} \label{ex:T}
We recall  the irrational torus $T^2_\gamma$. Consider the two dimensional torus 
$T^2 :=\{(e^{2\pi i x} , e^{2\pi i y}) \mid (x, y) \in {\mathbb R}^2 \}$ which is a Lie group, and the subgroup 
$S_\gamma := \{ (e^{2\pi i t} , e^{2\pi i \gamma t}) \mid t \in {\mathbb R}\}$ of $T^2$, which is diffeomorphic to $\R$. 
Then, by definition, the {\it irrational torus} $T_\gamma^2$ is the quotient $T^2/S_\gamma$ with the quotient diffeology; see also \cite[Exercise 31]{IZ} for the definition. 

The result \cite[8.15]{IZ} enables us to get a diffeological bundle of the form $\R  \to T^2 \stackrel{\rho}{\to} T_\gamma^2$. 
The result \cite[Theorem 1.4]{Kihara} implies that the smooth homotopy groups of a diffeological space $M$ are naturally isomorphic to the homotopy groups of $S^D(M)$. 
By applying the smooth homotopy exact sequence 
(\cite[8.21]{IZ}) to the bundle, we see that the projection $\rho : T^2 \to T^2_\gamma$ is a weak equivalence in the sense of Theorem \ref{thm:Kihara_Model} (1). 
Therefore, the discussion in Remark \ref{rem:weakEQ} implies that $(T^2)_\text{fib$\Q$}$ is diffeomorphic to $(T^2_\gamma)_\text{fib$\Q$}$ in $\mathsf{Diff}_*$.

Since $T^2$ and $T^2_\gamma$ are indeed nilpotent diffeological spaces, we may consider the rationalizations of the spaces in the context of Theorem \ref{thm:nilpotentDiff}. 
There exists a quasi-isomorphism between the de Rham complex $A_{PL}^*(S^D(M))\otimes \R$ and  the singular de Rham complex 
$A_{DR}^*(S^D(M))$ for each diffeological space $M$; see \cite[Corollary 3.5]{K2020}.  
Then, the computation in \cite[Remark 2.9]{K2020} yields that both $H^*(A_{PL}^*(S^D(T^2_\gamma)))$ and $H^*(A_{PL}^*(S^D(T^2)))$ are isomorphic to 
the exterior algebra $\wedge (t_1, t_2)$, 
where $\deg \ \!t_i = 1$ for $i = 1$ and $2$. Thus the minimal model for $T^2_\gamma$ is isomorphic to that for $T^2$. It turns out that rationalizations 
$(T^2)_\text{$\Q$}$ and $(T^2_\gamma)_\text{$\Q$}$ are diffeomorphic to each other in $\mathsf{Diff}$.

It is worth mentioning that $T^2$ and $T^2_\gamma$ are {\it not} smooth homotopy equivalent. 
In fact, the de Rham cohomology functor $H_{\text{de Rham}}^*( \ )$ defined by Souriau--de Rham complex is a smooth homotopy invariant; see \cite[6. 88]{IZ}. 
While we have an isomorphism $H_{\text{de Rham}}^*(T^2) \cong \wedge (t_1, t_2)$, the computation in \cite[Exercise 105]{IZ} shows that $H_{\text{de Rham}}^*(T^2_\gamma) \cong \wedge (t)$, where 
$\deg \ \!t = 1$. 
\end{ex}

In the rest of this section, we address the proof of Theorem \ref{thm:summary}. In what follows, the simplicial CDGA $A$ that we use is the polynomial de Rham complex $(A_{PL}^*)_\bullet$ and the underlying field is rational,  
unless otherwise specified. 

We recall the category ${\mathcal M}$ defined in the proof of \cite[Theorem 6.2]{G-H-T}.  
Each object $(E_K, K)$ in ${\mathcal M}$ called a {\it minimal local system}
is a pair of a $K(\pi, 1)$-simplicial set $K$ in $\mathsf{Set}^{\Delta^{\text{op}}}_{\ *}$ 
and a $1$-connected $A$-minimal model $E_K$ over $K$. 
A morphism $(\varphi, u) : (E_K, K) \to (E_{K'}', K')$ in 
${\mathcal M}$ is a pair of a based simplicial map $u: K\to K'$ and a morphism $\varphi : E_K \leftarrow (E_{K'}')^u$ of $A$-algebras over $K$, where  $(E_{K'}')^u$ denotes the pullback of $E_{K'}'$ along $u$ mentioned in Section \ref{sect:local_systems}. 
The composition in ${\mathcal M}$ is defined naturally with the functoriality of pullbacks of local systems along simplicial maps. 
Observe that a morphism in ${\mathcal M}$ is a pair of a morphism in ${\mathcal M}_K^{\text{op}}$ and a morphism $K \to K'$ in 
$\mathsf{Set}^{\Delta^{\text{op}}}_{\ *}$. 

By definition, morphisms $(\varphi, u)$ and $(\psi, w)$ from $(E_K, K)$ to $(E_{K'}', K')$ are 
{\it homotopic}, denoted $(\varphi, u)\simeq (\psi, w)$, if $u=w$ and $\varphi \simeq_K \psi$ in the sense of Definition \ref{defn:homotopy}. The homotopy relation $\simeq_K$ is an equivalence relation and then so is the homotopy relation $\simeq$ in ${\mathcal M}$.
We define a functor $\langle \ \rangle : {\mathcal M} \to \mathsf{Set}^{\Delta^{\text{op}}}$ by 
$$\langle (\varphi, u) \rangle = \overline{u}\circ  \langle \varphi \rangle$$
for $(\varphi, u) : (E_K, K) \to (E_{K'}', K')$, where $\overline{u}$ is the projection 
$\langle (E_{K'}')^u\rangle = \langle E_{K'}'\rangle^u \to \langle E_{K'}'\rangle$. 
The result \cite[Proposition 5.8 (i)]{G-H-T} yields that the realization functor 
$\langle  \ \rangle :  {\mathcal M} \to \mathsf{Set}^{\Delta^{\text{op}}}$ preserves the homotopy relation $\simeq$. 
Thus, we have a sequence containing equivalences of homotopy categories and an embedding
\begin{equation}\label{eq:embedding}
\xymatrix@C35pt@R5pt{
\text{Ho}({\mathcal M}) \ar[r]^-{\langle \  \rangle} & \text{fib$\Q$-Ho}(\mathsf{Set}^{\Delta^{\text{op}}}_{\ *})
 \ar[r]^-{i}_-{\subset} &\text{Ho}(\mathsf{Set}^{\Delta^{\text{op}}}_{\ *})
\ar@<1ex>[r]^-{| \ |_D}_-{\simeq}  & \text{Ho}(\mathsf{Diff}_*).  \ar@<1.2ex>[l]^-{S^D(\ \!)}
}
\end{equation}
Here $\text{fib$\Q$-Ho}(\mathsf{Set}^{\Delta^{\text{op}}}_{\ *})$ stands for the full subcategory of pointed connected fibrewise rational Kan complexes of finite type; 
see the paragraph before Theorem \ref{thm:summary}. 
 We observe that the well-definedness of the functor $\langle \  \rangle$ in the sequence follows from \cite[Theorem 5.10 (i)]{G-H-T}. 
%
Moreover, a refinement of the simplicial argument in the proof \cite[Theorem 6.2]{G-H-T} indeed leads to the following result.

\begin{thm}\label{thm:main1}
The realization functor $\langle \  \rangle : \text{\em Ho}({\mathcal M}) \to \text{\em fib$\Q$-Ho}(\mathsf{Set}^{\Delta^{\text{op}}}_{\ *})$ gives an equivalence of categories.  
\end{thm}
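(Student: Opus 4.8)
The plan is to transfer the equivalence established by G\'omez-Tato, Halperin and Tanr\'e in \cite[Theorem 6.2]{G-H-T} for topological spaces to the purely simplicial setting, isolating the combinatorial core of their argument that never used the topological realization functor. Concretely, I would first record that $\langle \ \rangle : {\mathcal M} \to \mathsf{Set}^{\Delta^{\text{op}}}_{\ *}$ lands in $\text{fib$\Q$-Ho}(\mathsf{Set}^{\Delta^{\text{op}}}_{\ *})$: for a minimal local system $(E_K, K)$ the realization $\langle (E_K, K) \rangle$ is a Kan fibration over $K = K(\pi,1)$ with fibre the realization of the representative Sullivan algebra, which is simply connected and rational of finite $\Q$-type by Theorem \ref{thm:miminal_models} together with \cite[Theorems 5.5 and 5.10]{G-H-T}; hence the total space is fibrewise rational in the sense recalled before Theorem \ref{thm:summary}. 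Since $\langle \ \rangle$ preserves the homotopy relation $\simeq$ by \cite[Proposition 5.8(i)]{G-H-T}, it descends to homotopy categories, giving a well-defined functor $\text{Ho}({\mathcal M}) \to \text{fib$\Q$-Ho}(\mathsf{Set}^{\Delta^{\text{op}}}_{\ *})$.

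Next I would establish essential surjectivity. Given a pointed connected fibrewise rational Kan complex $X$, form the Kan fibration $p : X \to X(1) = K(\pi_1(X),1)$ from the Moore--Postnikov tower, with fibre the universal cover $\widetilde X$. By hypothesis $H^*(A(\widetilde X))$ is of finite type, so \cite[Proposition 5.3(2)]{G-H-T} shows $\F(X,p)$ is a $1$-connected $A$-algebra of finite type over $K(\pi_1(X),1)$, and Theorem \ref{thm:miminal_models} produces a $1$-connected $A$ minimal model $m : (A\otimes_{A^0}\wedge Y, D) \xrightarrow{\simeq} \F(X,p)$. This gives an object $(E_X, K(\pi_1(X),1))$ of ${\mathcal M}$, and the adjoint $ad(m) : X \to \langle E_X \rangle$ (via the adjunction of Theorem \ref{thm:adjoint}) fits into a ladder of fibrations over $K(\pi_1(X),1)$ in which the map on fibres is the rationalization $\widetilde X \to \widetilde X_\Q$; by the five lemma for homotopy groups over a $K(\pi,1)$ this is the fibrewise rationalization, hence an isomorphism in $\text{fib$\Q$-Ho}(\mathsf{Set}^{\Delta^{\text{op}}}_{\ *})$. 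Thus $\langle E_X \rangle \cong X$ in the target homotopy category.

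For full faithfulness, I would argue that $\langle \ \rangle$ is bijective on homotopy classes of morphisms $(E_K,K) \to (E_{K'}',K')$. Since a morphism in ${\mathcal M}$ is a pair consisting of a based map $u : K \to K'$ of Eilenberg--MacLane spaces and an $A$-morphism $\varphi : (E_{K'}')^u \to E_K$ over $K$, and likewise a based map into a fibrewise rational space over $K'$ factors through a map on base $K(\pi,1)$'s plus a lift, the claim decomposes: on the level of bases, $[K, K']_* \cong \mathrm{Hom}(\pi_1(X),\pi_1(X'))$ on both sides; and over a fixed $u$, \cite[Theorems 5.5, 5.9, 5.10(i)]{G-H-T} give a bijection between homotopy classes of $A$-morphisms $(E_{K'}')^u \to E_K$ and homotopy classes of lifts of $\langle E_{K'}'\rangle^u \to \langle E_{K'}'\rangle$ covering $u$, which is exactly $[\langle E_K\rangle, \langle E_{K'}'\rangle]$ in the fibrewise rational homotopy category. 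Assembling these bijections compatibly with composition yields that $\langle \ \rangle$ is fully faithful, and combined with essential surjectivity it is an equivalence.

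The main obstacle I anticipate is the bookkeeping in the full faithfulness step: the G\'omez-Tato--Halperin--Tanr\'e results are stated for $A$-algebras over a \emph{fixed} base $K$ (the categories ${\mathcal M}_K$, ${\mathcal K}_K$), whereas ${\mathcal M}$ allows the base to vary, so one must carefully interleave the fibrewise statement \cite[Theorems 5.9, 5.10]{G-H-T} with the classification of maps $K(\pi,1) \to K(\pi',1)$ and check that the pullback functor $( \ )^u$ along a change of base is compatible with realization, i.e.\ $\langle E^u \rangle = \langle E \rangle^u$, and with the homotopy relations $\simeq_K$ on both sides. This is precisely the ``simplicial argument extracted skillfully from the proof of \cite[Theorem 6.2]{G-H-T}'' referred to in the introduction; the point is that every place where \cite{G-H-T} invoked $| \ |$ and the rationalization of topological spaces can be replaced verbatim by the Kan complex statements, since the relevant constructions $\F$, $\langle \ \rangle$, and the Moore--Postnikov tower are all defined simplicially from the outset.
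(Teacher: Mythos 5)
Your proposal is correct and follows essentially the same route as the paper: essential surjectivity via the Moore--Postnikov fibration, the minimal model of $\F(X,p)$ and the fibrewise localization of \cite[Theorem 5.10(i)]{G-H-T}, and full faithfulness by splitting a morphism into its effect on the $K(\pi,1)$ bases plus a fibrewise lift, then invoking the over-$K$ bijection of \cite[Theorem 5.8]{G-H-T} together with the compatibility $\langle E^u\rangle=\langle E\rangle^u$. The "bookkeeping" you flag—upgrading a based homotopy $h:\langle f\rangle\simeq\langle g\rangle$ to a homotopy over $K$ by factoring it through the pullback $Y^u$ via the induced maps on fundamental groups—is exactly the diagram chase the paper carries out, so your plan matches its proof in both structure and key inputs.
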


\begin{proof} The faithfulness of the functor follows from the proofs of 
\cite[Theorem 5.11 and 6.2]{G-H-T}. 
In fact, let $X$ and $Y$ be  objects in $\text{fib$\Q$-Ho}(\mathsf{Set}^{\Delta^{\text{op}}}_{\ *})$ realized by $\langle \  \rangle$ with minimal local systems $(E, j_E)$ and $(E', j_{E'})$, respectively.  Suppose that $\langle f  \rangle = \langle g \rangle$ for maps $f$ and $g$ in 
$\text{Hom}_{{\mathcal M}^{\text{op}}}((E, j_E), (E', j_{E'})/\simeq_{\text{homotopic}}$; that is, $\langle f  \rangle \simeq \langle g \rangle$ in 
$\mathsf{Set}^{\Delta^{\text{op}}}_*$.  Let $u$ denote the map $\langle f  \rangle_* = \langle g \rangle_* : 
K:= K(\pi_1(X), 1)  \to K(\pi_1(Y), 1)$ induced by $\pi_1(\langle f  \rangle) = \pi_1(\langle g  \rangle)$. 
We consider a commutative diagram 
\[
\xymatrix@C15pt@R1pt{
X \ar[r]^{\varepsilon_i} \ar[dd]_p & X\times \Delta[1] \ar[rd]^-{pr} \ar[rr]^-h \ar[dd]_{p'}& & Y \ar[dd]^-{q}  \\
 & &  X \ar[dd]^(0.3){p}&   \\ 
K(\pi_1(X), 1) \ar@/_4mm/[rrd]_-{id} \ar[r]_-{\cong}^-{(\varepsilon_i)_*} & 
K(\pi_1(X\times \Delta[1]), 1) \ar[rd]_-{pr_*}\ar[rr]^(0.4){h_*}|(.54)\hole&   &  K(\pi_1(Y), 1) \\
  & &    K(\pi_1(X), 1)   \ar[ru]_-{u}&
}
\]
in which $h$ is the homotopy giving $\langle f  \rangle \simeq \langle g \rangle$. Then, since $h_*\circ (\varepsilon_0)_* = f_* = f_*\circ pr_*\circ (\varepsilon_0)_*$, 
 it follows that $h_* = f_*\circ pr_*$. This yields that $f_*\circ (p \circ pr) = f_*\circ pr_*\circ p' = h_*\circ p' = q\circ h$. Thus we have a commutative diagram 
\[
\xymatrix@C15pt@R15pt{
X\times \Delta[1]   \ar@/^4mm/[rrd]^h   \ar@/_4mm/[ddr]_{p\circ pr} \ar[rd]^-{h^u}& & \\
 & Y^u \ar[r]^{\overline{u}} \ar[d] & Y  \ar[d]^{q} \\
 &K(\pi_1(X), 1) \ar[r]_u & K(\pi_1(Y), 1), 
}
\]
where $Y^u$ is the pullback of $Y$ along $u$.
Therefore, we see that $h^u$ gives the homotopy relation $\langle f  \rangle^u \simeq_K \langle g \rangle^u : X \to Y^u$  and then $\langle f  \rangle^u =  \langle g \rangle^u$ in 
$\text{Hom}_ {{\mathcal K}_K}(X, Y^u)/\simeq$. Here  
$\varphi^u$ denotes the unique map from $X$ to $Y^u$ with $\varphi = \overline{u}\circ \varphi^u$ for a map $\varphi : X \to Y$ which satisfies the condition that 
$q\circ \varphi = u\circ p$. 
The result \cite[Theorem 5.11]{G-H-T} implies that the realization functor 
$\langle \  \rangle$ gives rise to an isomorphism 
\begin{equation}\label{eq:ISO}
\xymatrix@C20pt@R20pt{
\langle \  \rangle :  \text{Hom}_{{\mathcal M}_K}((E, j_E), ({E'}^u, j_{{E'}^u}))/\simeq_K \ar[r]^-{\cong} & 
\text{Hom}_{{\mathcal K}_K}(X, Y^u)/\simeq_K.
}
\end{equation}
Observe that $\langle (E, j_E) \rangle^u = \langle ({E}^u, j_{{E}^u}) \rangle$ for any $A$-algebra $(E, j_E)$. 
Moreover, it follows that $\langle f^u \rangle = \langle f \rangle^u$.  Thus it turns out that $f^u \simeq_K g^u$ and by definition, 
$f \simeq g$ in ${\mathcal M}$. 

We proceed to proving that the functor is full with the same notation above. 
Let $f$ be in $\text{Hom}_{\text{Ho}(\mathsf{Set}^{\Delta^{\text{op}}}_{\ *})}(X, Y^u)$. Then, since $Y^u$ is the pullback, it follows that  $f = \overline{u}\circ f^u$ for some 
$f^u : X \to Y^u$. We regard $f^u$ as a class of maps in $\text{Hom}_{{\mathcal K}_K}(X, Y^u)/\simeq_K$. Then, by applying the bijection 
(\ref{eq:ISO}), we have an $A$-morphism $\widetilde{f} : ({E'}^u, j_{{E'}^u}) \to  (E, j_E)$ with $\langle \widetilde{f} \rangle = f^u$. The definition of the functor 
$\langle \  \rangle : {\mathcal M} \to \text{Ho}(\mathsf{Set}^{\Delta^{\text{op}}}_{\ *})$ allows us to deduce that 
$\langle (\widetilde{f}, u) \rangle = \overline{u} \circ \langle \widetilde{f} \rangle = \overline{u} \circ f^u = f$. This implies that the functor $\langle \  \rangle$ is full. 

We show that the functor $\langle \  \rangle$ is essentially surjective.  Let $X$ be a fibrewise rational Kan complex. We consider the fibrewise localization 
$f\ell :  X \to \langle (E, D) \rangle$ mentioned in \cite[Theorem 5.10 (i)]{G-H-T}, which fits into the commutative diagram
\[
\xymatrix@C30pt@R15pt{
F_A  \ar[r] & \langle (E, D) \rangle \ar[r] &  K(\pi_1(X), 1) \\ 
\widetilde{X} \ar[r]  \ar[u]^-{\ell}  & X \ar[r]_-\pi \ar[u]_-{f\ell} & K(\pi_1(X), 1), \ar@{=}[u]
}
\]
where $\ell$ is the localization of the simply connected space; see also Theorem \ref{thm:A-localization}. 
Since $X$ is fibrewise rational, it follows that $\ell$ is a homotopy equivalence and hence $f\ell$ is a weak equivalence. Then the realization 
$\langle (E, D) \rangle$ is isomorphic to $X$ in  $\text{fib$\Q$-Ho}(\mathsf{Set}^{\Delta^{\text{op}}}_{\ *})$.  
Thus, we have the result. 
\end{proof}

\begin{proof}[Proof of Theorem \ref{thm:summary}]
The assertion follows from Theorem \ref{thm:main1} and the equivalence between homotopy categories induced by 
the pair of the Quillen equivalences in the diagram (\ref{eq:QE_based}). In fact, the equivalence is given by the composite $| \ |_D\circ \langle \ \rangle$ of the functors.
\end{proof}



\begin{rem}\label{rem:R-localdiffSp}  We can develop real homotopy theory in $\mathsf{Diff}$ by using the simplicial de Rham complex $(A_{DR}^*)_\bullet$ instead of 
$(A_{PL}^*)_\bullet$. Here, we call that simply-connected diffeological spaces $X$ and $Y$ are the same real homotopy type if $A_{PL}^*(S^D(X))\otimes_\Q\R$ and 
$A_{PL}^*(S^D(Y))\otimes_\Q\R$ are quasi-isomorphic. 
In fact, we define fibrewise real simplicial sets and diffeological spaces by a similar way as in the definition of fibrewise rational spaces  
replacing the underlying field $\Q$ to $\R$; see the paragraph before Theorem \ref{thm:summary}.
Since the proof of Theorem \ref{thm:summary} 
works well for $(A_{DR}^*)_\bullet$, 
the theorem remains true even if $\Q$ is replaced with $\R$.
We also have a real version of Theorem \ref{thm:nilpotentDiff}. 
\end{rem}

Under the setting of Theorem \ref{thm:A-localization}, 
for a pointed connected diffeological space $M$, we call an $A$-algebra $E$ which is quasi-isomorphic to $\F(S^D(M), p)$ a {\it local system model} for $M$. Observe that $1$-connected $A$ minimal model for $E$ determines the fibrewise rational (real) homotopy type of $M$. For a pointed connected topological space $X$, if an  $A$-algebra $E$ is a local system model for $C(X)$, then the $E$ is called a {\it local system model} for $X$; see Section \ref{sect:perspectives} for the functor 
$C : \mathsf{Top} \to \mathsf{Diff}$.


\section{Examples of localizations associated with local systems}\label{sect:LocalSystems}
We begin by recalling the result \cite[Proposition 5.3 (i)]{G-H-T} in a somewhat general setting. 
Given an admissible simplicial CDGA  $A$, such as $(A_{DR}^*)_\bullet$, let 
${\mathcal L}_K$ denote  the category of morphisms from $A$ to local systems over a connected simplicial set $K$ with values in CDGA's whose morphisms are $A$-morphisms described in Definition \ref{defn:A-algebra}. We observe that the category 
${\mathcal A}_K$ in Section \ref{sect:local_systems} is a full subcategory of ${\mathcal L}_K$. 

Recall the functor $\F( \ )$ and $\langle \  \rangle$ described in Section \ref{sect:local_systems}. In view of these definitions,  we see that 
$\langle \  \rangle$ and $\F( \ )$ give rise to functors $\langle \  \rangle : {\mathcal L}_K^{\text{op}} \to {\mathcal S}_K$ and 
$\F( \ ) : {\mathcal S}_K \to {\mathcal L}_K^{\text{op}}$, respectively. 
Moreover, we have
\begin{prop}\label{prop:DGA} {\em (}cf. \cite[Proposition 5.3 (i)]{G-H-T}{\em )} Let $K$ be a connected simplicial set. Let $p : X \to K$ be an object in the category ${\mathcal S}_K$ which is surjective. 
Then there exists a natural isomorphism $\Gamma(\F(X, p)) \cong A(X):= \mathsf{Set}^{\Delta^{\text{\em op}}}(X, A_\bullet)$ of  CDGA's.  Here $\Gamma (\ )$ denotes  the global section functor described in Section \ref{sect:local_systems}. 
\end{prop}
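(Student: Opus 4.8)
The plan is to unwind the definitions of the global section functor $\Gamma$, the local system $\F(X,p)$, and the functor $A(\ ) = \mathsf{Set}^{\Delta^{\text{op}}}(\ ,A_\bullet)$, and then identify an element of $\Gamma(\F(X,p))$ with a compatible family of $A$-forms on the simplices of $X$, which is exactly an $A$-form on $X$. Recall that, by (\ref{eq:fibrations}), for a simplex $\sigma \colon \Delta[n] \to K$ we have $\F(X,p)_\sigma = A(X^\sigma) = \mathsf{Set}^{\Delta^{\text{op}}}(X^\sigma, A_\bullet)$, where $X^\sigma$ is the pullback of $p$ along $\sigma$. An element $\Phi \in \Gamma(\F(X,p)) = \mathsf{Set}^{K^{\text{op}}}(1, U\F(X,p))$ assigns to each $\sigma \in K_n$ an element $\Phi_\sigma \in A(X^\sigma)$, compatibly with the face and degeneracy maps of $K$, and since $\Gamma$ lands in $\mathsf{CDGA}$ with the pointwise structure, this assignment is a CDGA homomorphism in the obvious sense.

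First I would set up the comparison map. Given $\Phi \in \Gamma(\F(X,p))$, each nondegenerate simplex $x \colon \Delta[n] \to X$ projects to $p\circ x = \sigma \in K_n$, and $x$ factors through a canonical section $\widetilde{x}\colon \Delta[n] \to X^\sigma$ of the pullback projection; evaluating $\Phi_\sigma \in \mathsf{Set}^{\Delta^{\text{op}}}(X^\sigma, A_\bullet)$ on $\widetilde x$ yields an element of $A_n$, and letting $x$ range over all simplices of $X$ defines a simplicial map $X \to A_\bullet$, i.e.\ an element of $A(X)$. Conversely, given $\omega \in A(X) = \mathsf{Set}^{\Delta^{\text{op}}}(X, A_\bullet)$, restriction along the projection $X^\sigma \to X$ (which is a simplicial map over $K$) gives, for each $\sigma$, an element $\omega^\sigma \in A(X^\sigma) = \F(X,p)_\sigma$, and the naturality of pullbacks in $\sigma$ shows that $(\omega^\sigma)_{\sigma \in K}$ is compatible with faces and degeneracies, hence defines an element of $\Gamma(\F(X,p))$. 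These two constructions are visibly CDGA homomorphisms, since the CDGA structures on both sides are induced pointwise from that of the target $A_n$; naturality in $X$ (as an object of $\mathcal{S}_K$) is equally formal. It then remains to check the two composites are the identity.

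The one slightly delicate point — and I expect this to be the main obstacle — is verifying that the two constructions are mutually inverse, which amounts to the bookkeeping identity that $X$ is, simplex by simplex, ``glued together'' from its fibrewise pieces $X^\sigma$ in a way compatible with the presheaf structure of $\F(X,p)$. Concretely: for $x \in X_n$ with $p(x)=\sigma$, the pair $(\sigma, \widetilde x)$ recovers $x$, and distinct simplices of $X$ lying over the same $\sigma$ correspond to distinct simplices of $X^\sigma$; this is exactly the statement that the canonical map $\coprod_{\sigma \in K}X^\sigma \to X$ (quotiented appropriately by the simplicial identities) is ``bijective on simplices'' in the relevant sense. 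This is the content of the fact that $p \colon X \to K$ is determined by the functor $\sigma \mapsto X^\sigma$ on the category of simplices of $K$, which is a standard consequence of the pullback definition; once this is made precise, the verification that $\Phi \mapsto \omega \mapsto \omega^\sigma = \Phi_\sigma$ and $\omega \mapsto \Phi \mapsto \omega$ reduces to the simplicial identities for $\Phi$ and the universal property of the pullbacks $X^\sigma$.

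Finally, I would note that this is precisely the simplicial-set version of \cite[Proposition 5.3 (i)]{G-H-T}, which is stated there for the fibration of a topological space; since the argument uses nothing about $A$ beyond that the global section and pointwise CDGA structures are as described, and nothing about $p$ beyond surjectivity and the pullback construction, the same proof goes through verbatim in $\mathsf{Set}^{\Delta^{\text{op}}}$, giving the claimed natural isomorphism $\Gamma(\F(X,p)) \cong A(X)$ of CDGA's.
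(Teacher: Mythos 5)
Your proposal is correct and follows essentially the same route as the paper: the paper defines $v : A(X) \to \Gamma(\F(X,p))$ by $v(g) = (i_\sigma^*(g))_{\sigma\in K}$ (your restriction along the pullback projections) and $u : \Gamma(\F(X,p)) \to A(X)$ by $u(f)_\tau = (f_{p(\tau)})(\mathrm{id}_{[\dim\tau]},\tau)$ (your evaluation on the canonical section $\widetilde\tau$ of $X^{p(\tau)} \to \Delta[\dim\tau]$), and checks by direct calculation that they are mutually inverse. The only cosmetic remark is that the comparison map should be defined on all simplices of $X$, not just the nondegenerate ones, as your later wording already indicates.
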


\begin{proof}
In the original assertion, it is assumed that the simplicial map $p$ is a Kan fibration. However, we have the result 
without the assumption. In fact, the map 
$v : A(X) \to \Gamma(\F(X, p))$ defined by $v(g)  =\left( i^*_\sigma(g)\right)_{\sigma \in K}$ for $g \in A(X)$ is an isomorphism, where $i_\sigma :  X^\sigma \to X$ is the map which fits into the pullback diagram 
\[
\xymatrix@C30pt@R15pt{
X^\sigma \ar[r]^{i_\sigma} \ar[d] &X  \ar[d]^p\\
\Delta[\dim \sigma] \ar[r]_\sigma & K
}
\]
of $p$ along $\sigma$. 
A direct calculation shows that a morphism $u :  \Gamma(\F(X, p))  \to A(X)$ defined by 
$u(f)_\tau = (f_{p(\tau)})(\text{id}_{[\dim \tau]}, \tau)$ for $f  \in  \Gamma(\F(X, p))$ is the inverse of $v$. 
\end{proof}


The proof of \cite[Theorem 5.4]{G-H-T} implies the following result. 

\begin{thm}\label{thm:adjoint}
For  an object $j_E : A \to E$ in ${\mathcal L}_K$ and a simplicial set $(X, p)$ over $K$ for which $p$ is surjective, 
there exists a natural bijection 
$
{\mathcal S}_K\left( (X, p), \langle E, j_E\rangle \right) \cong {\mathcal L}_K^{\text{op}} \left( \F(X, p), (E, j_E)\right). 
$
\end{thm}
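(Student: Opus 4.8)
The plan is to construct the adjunction bijection explicitly in both directions and check that the two constructions are mutually inverse and natural. Unwinding the definitions, an element of ${\mathcal S}_K\left((X,p),\langle E, j_E\rangle\right)$ is a simplicial map $g : X \to \langle E, j_E\rangle$ over $K$; by the description \eqref{eq:realization} of the realization functor, for each $n$-simplex $x \in X_n$ we have $g(x) = (\varphi_x, p(x))$ with $\varphi_x \in {\mathcal A}_{\Delta[n]}(E^{p(x)}, A_{\Delta[n]})$. On the other side, an element of ${\mathcal L}_K^{\text{op}}\left(\F(X,p),(E,j_E)\right)$ is an $A$-morphism $\Psi : E \to \F(X,p)$ of local systems over $K$, i.e.\ for each $\sigma : \Delta[m] \to K$ a CDGA map $\Psi_\sigma : E_\sigma \to \F(X,p)_\sigma = A(X^\sigma)$ compatible with faces, degeneracies and the $A$-algebra structure maps $j_E$, $j_{\F(X,p)}$.

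First I would describe the map from right to left. Given $\Psi$, for an $n$-simplex $x \in X_n$ consider $\sigma = p(x) : \Delta[n] \to K$; then $x$ determines a section-like map, more precisely $x$ corresponds to a simplicial map $\Delta[n] \to X^{\sigma}$ over $\Delta[n]$ (using $p(x)=\sigma$), and postcomposing with $\Psi_\sigma : E_\sigma \to A(X^\sigma)$ followed by restriction along this map yields an element $\varphi_x \in {\mathcal A}_{\Delta[n]}(E^{\sigma}, A_{\Delta[n]})$. Setting $g(x) = (\varphi_x, \sigma)$ defines a simplicial map $X \to \langle E, j_E\rangle$ over $K$; one checks simplicial naturality from the compatibility of $\Psi$ with faces and degeneracies, and the fact that $\varphi_x$ is an $A$-algebra map over $\Delta[n]$ from the condition $\Psi_\sigma \circ j_E = j_{\F(X,p)}$ together with Proposition \ref{prop:DGA}. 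For the reverse direction, given $g$ with $g(x)=(\varphi_x,p(x))$, and an $m$-simplex $\sigma : \Delta[m] \to K$, I would define $\Psi_\sigma : E_\sigma \to A(X^\sigma)$ by sending $\xi \in E_\sigma$ to the element of $A(X^\sigma) = \mathsf{Set}^{\Delta^{\text{op}}}(X^\sigma, A_\bullet)$ whose value on an $n$-simplex $y \in (X^\sigma)_n$ — which amounts to a pair $(x, \alpha)$ with $x \in X_n$, $\alpha : \Delta[n] \to \Delta[m]$, $p(x) = \sigma \circ \alpha$ — is $\varphi_x(\alpha^*\xi)$, where $\alpha^* : E_\sigma \to E_{\sigma\circ\alpha}$. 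Compatibility with faces/degeneracies and with the CDGA structure follows from $g$ being a simplicial map over $K$ and each $\varphi_x$ being a CDGA map.

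Next I would verify that these two assignments are inverse to each other: starting from $\Psi$, producing $g$, and recomputing $\Psi$ returns the original map by the very formula $\Psi_\sigma(\xi)(x,\alpha) = \varphi_x(\alpha^*\xi)$, which unwinds to the restriction of $\Psi_{\sigma\circ\alpha}$, hence to $\Psi$ by its simplicial compatibility; the other composite is checked symmetrically by evaluating $\varphi_x$ on $\xi \in E^{p(x)}$. Finally, naturality in $(X,p)$ and in $(E,j_E)$ is a routine diagram chase once the formulas are in hand: a morphism $(Y,q) \to (X,p)$ in ${\mathcal S}_K$ and an $A$-morphism $E \to E'$ in ${\mathcal L}_K$ each act by pre/post-composition on both sides compatibly with the explicit formulas above. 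The main obstacle I anticipate is purely bookkeeping: correctly identifying $n$-simplices of the pullback $X^\sigma$ with pairs $(x,\alpha)$ and tracking the interaction of the contravariant structure maps $\alpha^*$ of the local system $E$ with the simplicial structure, so that the displayed formula for $\Psi_\sigma$ is well-defined and simplicial. Since \cite[Theorem 5.4]{G-H-T} already establishes exactly this adjunction when $p$ is a Kan fibration, and none of the constructions above used that hypothesis, the only real content is to observe that the cited proof never invokes the fibration condition — in the same spirit as the proof of Proposition \ref{prop:DGA} — so the bijection holds for all objects $(X,p)$ of ${\mathcal S}_K$.
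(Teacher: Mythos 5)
Your proposal is correct and follows essentially the same route as the paper: both directions are given by the same explicit formulas (your $\Psi_\sigma(\xi)(x,\alpha)=\varphi_x(\alpha^*\xi)$ is the paper's $(\Phi(f)_\sigma)(u)((\alpha,\tau))=(f_\tau^1)_{\mathrm{id}}(\alpha^*u)$, and your restriction of $\Psi_{\sigma\circ\alpha}$ along the tautological section of the pullback is the paper's $(\Psi(h)_\tau^1)_\sigma(\phi)=h_{\sigma^*p(\tau)}(\phi)(\mathrm{id},\sigma^*\tau)$), and the key observation that the argument of \cite[Theorem 5.4]{G-H-T} never uses the Kan fibration hypothesis is exactly how the paper frames the result. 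Your write-up is in fact more detailed than the paper's, which only records the two formulas.
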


\begin{proof} We define a map $\Phi : {\mathcal S}_K\left( (X, p) , \langle E, j_E\rangle \right) \to 
{\mathcal L}_K \left((E, j_E),  \F(X, p)\right)$ by 
\[
\left(\Phi(f)_\sigma\right)(u) ((\alpha, \tau)) := (f_\tau^1)_{\text{id}_{[\dim p(\tau)]}}(\alpha^*u),
\]
where  $\sigma : \Delta[\dim \sigma] \to K$ is an element of $K$, $u \in E_\sigma$, $(\alpha , \tau) \in X^\sigma$ and 
$f_\tau = (f_\tau^1, p(\tau))$. 
The inverse $\Psi$ of $\Phi$ is defined by 
$\psi(h)_\tau = ((\psi(h)_\tau^1, p(\tau)))$, where 
\[
\left(\Psi(h)_\tau^1\right)_\sigma(\phi) := h_{\sigma^*p(\tau)}(\phi)(\text{id}_{[\dim \sigma]}, \sigma^*\tau) 
\]
for $\tau \in X$, $\sigma \in \Delta[\dim p(\tau)]$ and $\phi \in (E^{p(\tau)})_\sigma= E_{\sigma^*p(\tau)}$. 
\end{proof}

In the rest of this section, we give two examples of local systems which come from suitable relative Sullivan algebras. 
The first one essentially explains the result \cite[Proposition 7.1 (ii)]{G-H-T}. As a consequence, 
we have the rationalization in Theorem \ref{thm:nilpotentDiff} of a given nilpotent diffeological space by using an $A$-minimal model. 

\begin{ex}\label{ex:relativeSullivanmodels} 
Let  $A$ be an admissible simplicial CDGA. 
We consider a relative Sullivan algebra $(\wedge W, \delta) \stackrel{i}{\to} (\wedge W\otimes \wedge Z, \delta) \stackrel{\rho}{\to} (\wedge Z, d)$ in which 
$\rho$ is the projection and $Z^1 = 0$. Then, we define a local system $(E, D)$ 
over the simplicial set  $\langle (\wedge W, \delta) \rangle^S$ by 
$(E_\sigma, D_\sigma) = A_n\otimes_\sigma (\wedge W\otimes \wedge Z)$ for 
$\sigma \in \langle (\wedge W, \delta) \rangle_n$. 
Here $\langle (\wedge W, \delta) \rangle^S := \mathsf{CDGA}(\wedge W, A_\bullet)$ denotes Sullivan's realization. 
By virtue of \cite[Propositions 5.5 and 7.1 (i)]{G-H-T}, we see that $(E, D)$ is an $A$ minimal model and there exists a 
commutative diagram 
\[
\xymatrix@C25pt@R15pt{
 \langle (\wedge Z, d) \rangle^S \ar[r] \ar[rd]_{\langle \rho \rangle}&  \langle (E, D )\rangle \ar[r]^\pi \ar[d]^{\xi}_{\cong} \ar[r]& 
 \langle (\wedge W, \delta) \rangle^S  \\
   &  \langle (\wedge W\otimes \wedge Z, \delta) \rangle^S \ar[ru]_{\langle i \rangle} & 
}
\]
for which the upper row is a Kan fibration, 
where $\pi$ is the natural projection of the $A$ minimal model and $\xi$ is an isomorphism of simplicial sets defined by $\xi((\varphi, \sigma))(a\otimes b) = \varphi(1\otimes a\otimes b)$. 
Suppose further that $W^i = 0$ if $i\neq 1$, it follows from 
\cite[8.12 Proposition]{B-G} that $\langle (\wedge W, \delta) \rangle^S$ is a $K(\pi, 1)$-space. 

Let $M$ be a connected nilpotent diffeological space of finite type and $(\wedge V, \delta)$ a minimal model for $A_{PL}^*(S^D(M))$. 
Then the model produces  
a relative Sullivan model of the form 
$(\wedge V^1, \delta) \to (\wedge V , \delta) \stackrel{\rho}{\to} (\wedge V^{\geq 2}, d)$. 
Applying the construction above to the sequence, we recover the rationalization 
$M \to M_\Q =| \langle (\wedge V, \delta) \rangle^S |_D \cong  | \langle (E, D) \rangle|_D$
in Theorem \ref{thm:nilpotentDiff} with the $A$-minimal model $(E, D)$, where $A = (A_{PL}^*)_\bullet$. 
\end{ex}

The second example (Example \ref{ex:Alg_models} below) deals with a fibrewise localization of $M$ over the simplicial set $K(\pi_1^D(M), 1)$ when $\pi_1^D(M)$ acts on $H^*(\widetilde{S^D(M)}; \Q)$ nilpotently.   
To explain that, we begin with a bit more general setting. 

Let $K$ be a simplicial set. 
We recall from \cite[18.1]{Halperin} the local system $R_*$ associated  with a relative Sullivan algebra (KS extension) $A(K) \stackrel{i}{\to} R \to (T, d_T)$. We observe that 
$R \cong A(K) \otimes T$ as an algebra. We assume further that $(T, d_T)$ is simply connected. 
For a simplex $\sigma : \Delta[n] \to K$, we define a CDGA 
$(R_*)_\sigma$ by 
\[
(R_*)_\sigma:= A(n)\otimes_{e_\sigma}R = A(n)\otimes_{e_\sigma, A(K)}(A(K)\otimes T),
\]
 where $e_\sigma : A(K) \to A(\Delta[n])=A(n)$ is the algebra map induced by $\sigma$ and the tensor product of CDGA's stands for the pushout of the diagram 
\[
\xymatrix@C10pt@R15pt{
A(n) & A(K) \ar[l]_{e_\sigma} \ar[r]^-i & A(K)\otimes T.
}
\] 
It follows that for $\sigma \in K$, the natural map $A_\sigma = A(n) \to A(n)\otimes_{e_\sigma, A(K)}R =(R_*)_\sigma$ induces a morphism $j : A \to R_*$ of local systems over $K$. 
By a usual argument, we have 
\begin{lem}\label{lem:SA-L} Let $K$ be a connected simplicial set. The construction above gives rise to a functor 
$(\ )_*$ from the category $\mathcal{SA}_{A(K)}$ of relative Sullivan algebras under $A(K)$ to the category $\mathcal{A}_K$ of $A$-algebras 
over $K$. 
\end{lem}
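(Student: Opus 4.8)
The plan is to verify directly that the assignment $(R, i) \mapsto R_*$ on objects, together with the obvious assignment on morphisms, satisfies the functoriality axioms. First I would make the object part precise: given a relative Sullivan algebra $A(K) \stackrel{i}{\to} R \to (T, d_T)$ with $(T,d_T)$ simply-connected, one has already defined the local system $R_*$ by $(R_*)_\sigma = A(n) \otimes_{e_\sigma, A(K)} R$ for $\sigma : \Delta[n] \to K$, and one must check that this is indeed an object of $\mathcal{A}_K$, i.e.\ that $j : A \to R_*$ is an $A$-algebra in the sense of Definition \ref{defn:A-algebra}: that $R_*$ is extendable and that $H(R_*)$ is locally constant. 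Extendability is essentially inherited from the fact that $A$ is admissible (so each $A^i_\bullet$ is extendable, Definition \ref{defn:admissible}(iv)) and that tensoring with the fixed graded algebra $T$ over $A(K)$ preserves the relevant surjectivity of restriction maps; the local constancy of $H(R_*)$ uses that $(T,d_T)$ is simply-connected together with the standard base-change/Künneth argument for relative Sullivan algebras, exactly as in \cite[Chapter 18]{Halperin}. I would simply cite these facts rather than re-prove them.

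Next I would spell out the behaviour on morphisms. A morphism in $\mathcal{SA}_{A(K)}$ from $(R, i)$ to $(R', i')$ is a CDGA map $\phi : R \to R'$ under $A(K)$, i.e.\ with $\phi \circ i = i'$. For each $\sigma : \Delta[n] \to K$ the universal property of the pushout defining $(R_*)_\sigma = A(n) \otimes_{e_\sigma} R$ yields a unique CDGA map $(\phi_*)_\sigma : (R_*)_\sigma \to (R'_*)_\sigma$ making the evident square commute (with $\mathrm{id}_{A(n)}$ on the left leg and $\phi$ on the right leg). I would then check the two things that make this a morphism of $A$-algebras: (a) naturality in $\sigma$, i.e.\ that the maps $(\phi_*)_\sigma$ commute with the structure maps $\alpha^* : (R_*)_\tau \to (R_*)_\sigma$ of the presheaf for each $\alpha : \sigma \to \tau$ in $K$ — this is again a uniqueness-of-pushout-map argument, since both composites solve the same universal mapping problem out of $(R_*)_\tau$; and (b) compatibility with the $A$-algebra structure maps $j$ and $j'$, i.e.\ $\phi_* \circ j = j'$, which holds because on the left tensor factor $(\phi_*)_\sigma$ is the identity of $A(n)$ and $j$, $j'$ are both induced by the unit $A(n) \to A(n) \otimes_{e_\sigma} (-)$.

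Finally I would record that $(\ )_*$ respects identities and composition: for $\phi = \mathrm{id}_R$ the uniqueness clause in the pushout forces $(\mathrm{id}_R)_*$ to be $\mathrm{id}_{R_*}$, and for composable $\phi, \psi$ the map $(\psi\phi)_*$ and the composite $\psi_* \phi_*$ both solve the same universal problem at each $\sigma$, hence coincide. This gives the functor $(\ )_* : \mathcal{SA}_{A(K)} \to \mathcal{A}_K$ asserted in the lemma. I do not expect any serious obstacle here — the phrase ``by a usual argument'' signals that everything is formal once base change along $e_\sigma$ is set up; the only point requiring genuine input (as opposed to pushout bookkeeping) is the verification that $R_*$ lands in $\mathcal{A}_K$ rather than merely in $\mathcal{L}_K$, i.e.\ extendability of $R_*$ and local constancy of $H(R_*)$, and that is precisely where the hypotheses that $A$ is admissible and that $(T,d_T)$ is simply-connected get used, via \cite[Chapter 18]{Halperin} and Definition \ref{defn:admissible}.
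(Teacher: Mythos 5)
Your proposal is correct and follows essentially the same route as the paper, which offers no written proof beyond the phrase ``by a usual argument'' and defers the only substantive point --- that $j : A \to R_*$ is genuinely an $A$-algebra (extendable, with $H(R_*)$ locally constant) --- to \cite[18.9 Proposition]{Halperin}, cited in the proof of the subsequent Lemma \ref{lem:minimality}. Your pushout-universal-property bookkeeping for the morphism part and functoriality, together with the appeal to admissibility of $A$ and to Halperin's Chapter 18 for the $A$-algebra verification, is exactly the intended argument.
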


The following lemma is needed when we construct a more explicit minimal (local system) model for a given diffeological space. 

\begin{lem}\label{lem:minimality} Suppose that $(T, d_T)$ in the construction above is a simply-connected CDGA. Then, 
the map $j : A \to R_*$ is a $1$-connected $A$ minimal model. 
\end{lem}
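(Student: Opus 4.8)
The plan is to unwind the definitions of $1$-connected $A^0$ minimal model and $1$-connected $A$ minimal model (Definition \ref{defn:minimalA}) and check them directly against the construction of $R_*$. Recall that $R \cong A(K)\otimes T$ as an algebra with $(T,d_T)$ a simply connected Sullivan algebra, say $(T,d_T)=(\wedge Z, d)$ with $Z=Z^{\geq 2}$ and $d$ decomposable after the linear part is killed in the usual Sullivan sense; the relative Sullivan condition on $A(K)\to R$ says precisely that $R$ admits a well-ordered basis of $T$ with lower-triangular differential, i.e. $(T,d_T)$ itself is a Sullivan algebra. Then for a simplex $\sigma:\Delta[n]\to K$ we have
\[
(R_*)_\sigma = A(n)\otimes_{e_\sigma, A(K)} (A(K)\otimes T) \cong A(n)\otimes T = A_n \otimes_{A_n^0} (A_n^0 \otimes T),
\]
using $A_n = A_n^0\otimes \wedge(dt_1,\dots,dt_n)$ from admissibility (Definition \ref{defn:admissible}(ii)) and $A_0 = \K$ so that $A(n)^0$ contains $t_1,\dots,t_n$. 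So the ``$A^0$-part'' $\wedge Y$ of $R_*$ should be taken to be the local system $\sigma \mapsto A(n)^0 \otimes T$, and one must exhibit the differential $D$ on $R_*$ as $D=\sum_{i\geq 0} D_i$ with $D_0$ the ``vertical'' piece making $(\wedge Y, D_0)$ an $A^0$ minimal model with representative Sullivan algebra $(T,d_T)$.

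Concretely I would proceed in the following steps. First, identify $Y$: set $(\wedge Y)^\sigma := A(n)^0\otimes T$, which by the admissibility normalization is $(A^0)^\sigma \otimes (\wedge Z, d)$ as a graded $(A^0)^\sigma$-algebra, so $(T,d_T)=(\wedge Z,d)$ is the required representative Sullivan algebra and, being simply connected by hypothesis, it is indeed $1$-connected; this is exactly Definition \ref{defn:minimalA}(1). Second, decompose the differential $D$ on $R_* = A\otimes_{A^0}\wedge Y$ by word-length in the $\wedge(dt_i)$-direction (equivalently, by the grading coming from the tensor factor $\wedge(dt_1,\dots,dt_n)$ of $A_n$): write $D = D_0 + D_1 + \cdots$ where $D_i$ raises the $A$-degree by $i$. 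The piece $D_0$ is then the differential induced from $d_T$ on $T$ together with the internal differential of $A^0$ (which vanishes by the normalization $A_n=A_n^0\otimes\wedge(dt_i)$ with $d(A_n^0)\subseteq A_n^0\cdot\{dt_i\}$, so $D_0$ restricted to $\wedge Y$ is essentially $1\otimes d_T$). This gives $D_i : A^*\otimes_{A^0}\wedge Y \to A^{*+i}\otimes_{A^0}\wedge Y$ as required in Definition \ref{defn:minimalA}(2). Third, check that $(\wedge Y, D_0)$ is genuinely an $A^0$ minimal model: the isomorphism $(\wedge Y)^\sigma\cong (A^0)^\sigma\otimes(\wedge Z,d)$ of differential graded $(A^0)^\sigma$-algebras is built from the relative Sullivan structure of $A(K)\to R$ and base change along $e_\sigma$, and one verifies it is compatible with face/degeneracy maps so that it is an isomorphism of local systems, not just pointwise. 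Finally, note $j:A\to R_*$ is an $A$-algebra in the sense of Definition \ref{defn:A-algebra} — extendability of $R_*$ and local constancy of $H(R_*)$ follow because $A$ is extendable (Proposition \ref{prop:extendability}), $T$ is a Sullivan algebra of finite-type-free pieces, and base change along the weak equivalences $e_\sigma$ (the $A(n)$ are all quasi-isomorphic to $\K$ by admissibility (iii)) preserves cohomology — hence $j:A\to R_*$ is a $1$-connected $A$ minimal model.

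The main obstacle I expect is not the local picture but the \emph{globality/naturality}: one must check that the fibrewise isomorphism $(R_*)_\sigma \cong A(n)\otimes T$ and the resulting identification $(\wedge Y)^\sigma\cong (A^0)^\sigma\otimes(\wedge Z,d)$ assemble into an isomorphism of local systems, i.e. commute with all the structure maps $\alpha^*:(R_*)_\tau\to (R_*)_\sigma$ for $\alpha:\sigma\to\tau$ in $K$. This amounts to showing the chosen trivialization $R\cong A(K)\otimes T$ is carried along correctly under base change $e_\sigma$ versus $e_\tau$, which is where the relative Sullivan (KS) structure — a basis of $T$ indexed by a well-ordered set with $d$ strictly lower triangular over $A(K)$ — does the work, since such a basis is intrinsic to $R$ as an $A(K)$-algebra and so survives base change functorially. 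A secondary subtlety is pinning down exactly what $D_0$ is: one has to use the explicit normal form $A_n = A_n^0\otimes\wedge(dt_1,\dots,dt_n)$ to see that the ``degree-zero in the $A$-direction'' part of $D$ on $\wedge Y = A^0\otimes T$ reduces to $d_T$, i.e. that no $A^0$-coefficients of positive form-degree sneak into $D_0$; this is a direct consequence of admissibility condition (ii) but should be spelled out. Everything else is bookkeeping that follows formally from Lemma \ref{lem:SA-L} and the definitions.
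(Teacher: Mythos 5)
Your proposal is correct and follows essentially the same route as the paper: one reads off from the construction that $R_* = (A\otimes_{A^0}\wedge Y, D=\sum_{i\geq 0} D_i)$ with $(\wedge Y)^\sigma = (A^0)^\sigma\otimes(T,d_T)$ and $D_0 = 1\otimes d_T$, so minimality is immediate from Definition \ref{defn:minimalA}. The only difference is that the paper disposes of the $A$-algebra conditions (extendability of $R_*$ and local constancy of $H(R_*)$) by citing \cite[18.9 Proposition]{Halperin} rather than arguing them directly as you sketch.
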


\begin{proof} It follows from the result \cite[18.9 Proposition]{Halperin} that $j$ is an $A$-algebra over $K$; 
see Definition \ref{defn:A-algebra}. 
By the definition of $R_*$ with the tensor product mentioned above, we can write $R_* = (A\otimes_{A^0}\wedge Y, D = \sum_{i\geq 0}D_i)$ with $(\wedge Y)^\sigma = ({A^0})^\sigma \otimes (T, d_T)$ for $\sigma \in K$ and 
$D_i : A^*\otimes_{A^0}\wedge Y \to A^{*+i}\otimes_{A^0}\wedge Y$. This implies that $R_*$ is minimal in the sense of Definition \ref{defn:minimalA}. In particular, we have $D_0 = 1\otimes d_T$. 
\end{proof}


Let (*) $: F \to X \stackrel{\pi}{\to} K$ be a fibration with simply-connected fibre. 
Applying the realization functor $\langle \  \rangle : {\mathcal L}_K^{\text{op}} \to {\mathcal S}_K$ and 
the functor $( \ )_*$ mentioned in Lemma \ref{lem:SA-L} to an appropriate relative Sullivan model for $\pi$, 
we will obtain a fibrewise rationalization of $X$ in the fibration (*). To see this, let $v : \wedge V \stackrel{\simeq}{\to} A(K)$  be a Sullivan model and 
 $i : \wedge V \to \wedge V \otimes \wedge W$ a minimal Sullivan model for $\pi^*\circ v$. By definition, the map $i$ fits into the commutative diagram
\begin{equation}\label{eq:u}
\xymatrix@C30pt@R20pt{
\wedge V \ar[r]^-i \ar[d]_-{\simeq}^v&  \wedge V \otimes \wedge W \ar[r]^-\rho \ar[d]_-{\simeq}^\mu & \wedge W \ar[d]^-{\overline{\mu}}\\
A(K) \ar[r]_{\pi^*} & A(X) \ar[r] & A(F)   
}
\end{equation}
in which $\rho$ is the projection onto the quotient CDGA by the ideal generated by the image of $i$. 
It follows from \cite[Lemma 14.1 and Theorem 6.10]{FHT} that the natural map $v\otimes 1 : 
\wedge V\otimes_{\wedge V}\wedge V \otimes \wedge W \to A(K)\otimes _{\wedge V}\wedge V \otimes \wedge W$  is a quasi-isomorphism. Then, we have a commutative diagram 
\[
\xymatrix@C30pt@R20pt{
&\wedge V\otimes_{\wedge V}\wedge V \otimes \wedge W \cong \wedge V \otimes \wedge W \ar[d]_{v\otimes 1}^{\simeq} \ar[r]^-{\mu}_-\simeq & A(X) \\  
A(K)\otimes \wedge W \cong \hspace{-2cm} & A(K)\otimes _{\wedge V}\wedge V \otimes \wedge W \ar[ru]_-{\pi^*\cdot \mu=:\widetilde{\mu}} & A(K).\ar[u]_{\pi^*} \ar[l]^-{in}
}
\]
Therefore, it is readily seen that $\widetilde{\mu}$ defined to be $\pi^*\cdot \mu$ is a quasi-isomorphism and the inclusion $in : A(K) \to A(K)\otimes \wedge W$ is a Sullivan model for $\pi$.  
Let $\eta_X : X \to \langle A(X) \rangle^S$ be the unit, namely,  the adjoint to the identity on $A(X)$. We define a morphism $\theta : X \to \langle (A(K)\otimes \wedge W)_* \rangle$ of simplicial sets 
by $\theta(\sigma) = (\varphi_\sigma, \pi(\sigma))$, where 
$\sigma \in X$ and $\varphi_\sigma(u\otimes a\otimes b) := u\cdot (\eta_X(\sigma)\circ\widetilde{\mu}(a\otimes b))$. By a direct computation, we see that 
$\theta$ is a natural morphism with respect to relative Sullivan algebras under $A(K)$. 

The following proposition is essentially a rewriting of \cite[Proposition 7.1]{G-H-T} with the explicit map $\theta$. 

\begin{prop}\label{prop:FromKS-ex} Suppose that $\overline{\mu} : \wedge W \to A(F)$ in the diagram (\ref{eq:u}) is a quasi-isomorphism. Then in the commutative diagram
\[
\xymatrix@C30pt@R15pt{
\langle \wedge W \rangle^S  \ar[r] & \langle (A(K)\otimes \wedge W)_* \rangle \ar[r] & K \\ 
F \ar[r]  \ar[u]^-{\theta_{res}}  & X \ar[r]_-\pi \ar[u]_-\theta & K, \ar@{=}[u]
}
\]
the restriction $\theta_{res} : F \to \langle \wedge W \rangle^S$ is the rationalization and hence $\theta : X \to \langle (A(K)\otimes \wedge W)_* \rangle$ is 
the fibrewise rationalization. 
\end{prop}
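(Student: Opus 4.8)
The plan is to verify that the square commutes, that $\theta_{res}$ is the rationalization of $F$, and then to invoke the fibrewise localization machinery already set up (Theorem \ref{thm:A-localization}, Proposition \ref{prop:DGA}, Lemma \ref{lem:minimality}) to conclude that $\theta$ itself is the fibrewise rationalization.

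First I would check commutativity of the diagram. The lower square commutes by the very definition of $\theta$: $\pi_{\langle \ \rangle}\circ\theta(\sigma)=\pi_{\langle \ \rangle}(\varphi_\sigma,\pi(\sigma))=\pi(\sigma)$, since the projection $\langle (A(K)\otimes\wedge W)_*\rangle\to K$ is exactly $(\varphi_\sigma,\pi(\sigma))\mapsto\pi(\sigma)$; here one uses that the representative Sullivan algebra for the $A$-minimal model $(A(K)\otimes\wedge W)_*$ is $(\wedge W,d)$ by Lemma \ref{lem:minimality}, so its realization sits inside $\langle (A(K)\otimes\wedge W)_*\rangle$ as the fibre over the base point, giving the top row as a Kan fibration (cf.\ the diagram in Theorem \ref{thm:A-localization}). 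The restriction $\theta_{res}$ to the fibre is then the composite $F\xrightarrow{\eta_F}\langle A(F)\rangle^S\xrightarrow{\langle u\rangle^S}\langle\wedge W\rangle^S$, where $\eta_F$ is the unit and $u:\wedge W\to A(F)$ is the given Sullivan model; this identification follows by restricting the formula $\varphi_\sigma(u\otimes a\otimes b)=u\cdot(\eta_X(\sigma)\circ\widetilde\alpha(a\otimes b))$ to $\sigma\in F$ and using that the bottom row of (\ref{eq:u}) composes $\wedge W\to A(F)$ along $u$.

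Next, since $(\wedge W,d)$ is a minimal Sullivan algebra and $u:\wedge W\xrightarrow{\simeq}A(F)$ is a quasi-isomorphism (the hypothesis), the composite $\langle u\rangle^S\circ\eta_F$ is, by the Sullivan--de Rham equivalence (\cite[9.4 Theorem]{B-G}, cf.\ Theorem \ref{thm:nilpotentDiff}), precisely the rationalization of $F$ when $F$ is simply connected of finite type — and $F$ is simply connected by the standing assumption on the fibration $(*)$, while finiteness of type follows from minimality of $(\wedge W,d)$ together with the fact that it models $A(F)$. Hence $\theta_{res}$ is the rationalization of the fibre.

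Finally I would assemble these facts. On base spaces $\theta$ induces the identity on $K$, and on fibres it induces the rationalization; since both rows are fibrations over the same base $K$ and $\theta$ restricts to a rationalization on fibres, $\theta$ is by definition a fibrewise rationalization of $X$ in the fibration $(*)$. Concretely, one can also recognise $\langle (A(K)\otimes\wedge W)_*\rangle$ as the realization $\langle(A\otimes_{A^0}\wedge Y,D)\rangle$ of the $A$-minimal model of $\F(X,\pi)$: by Proposition \ref{prop:DGA} one has $\Gamma(\F(X,\pi))\cong A(X)$, and the Sullivan model $in:A(K)\to A(K)\otimes\wedge W$ for $\pi$ exhibited above, together with Lemma \ref{lem:minimality}, shows $(A(K)\otimes\wedge W)_*$ is an $A$-minimal model of $\F(X,\pi)$, whence $\theta=ad(m)$ up to the relevant identifications and Theorem \ref{thm:A-localization}(2) applies verbatim. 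The main obstacle I expect is the bookkeeping in identifying $\theta_{res}$ with $\langle u\rangle^S\circ\eta_F$ and checking that the top row is genuinely a Kan fibration with fibre $\langle\wedge W\rangle^S$ — i.e.\ matching the ad hoc formula for $\varphi_\sigma$ against the abstract description of the realization functor and of $(\ )_*$ — rather than any deep homotopy-theoretic point; everything substantive has already been proved in \cite{G-H-T} and \cite{Halperin}.
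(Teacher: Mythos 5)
Your proposal is correct and follows essentially the same route as the paper's proof: invoke Lemma \ref{lem:minimality} to see that $(A(K)\otimes\wedge W)_*$ is a $1$-connected $A$ minimal model (whence the top row is a Kan fibration, via \cite[Proposition 5.5]{G-H-T}), and then identify $\theta_{res}$ with $u^*\circ\eta_F$ by evaluating $\varphi_w(1\otimes 1\otimes b)=u(b)(w)$ for $w\in F$, which is the classical rationalization since $u$ is a quasi-isomorphism from a Sullivan algebra. The only difference is expository: you spell out the commutativity checks and the final assembly more explicitly, while the paper compresses everything into the single formula computation.
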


\begin{proof} By virtue of Lemma \ref{lem:minimality}, we see that the local system $(A(K)\otimes \wedge W)_*$ is a $1$-connected $A$ minimal model. 
Then, it follows from \cite[Proposition 5.5]{G-H-T} that the sequence in the upper row is a Kan fibration. 
The rationalization of $F$ is given by $\overline{\mu}^*\circ \eta_F : F \to \langle A(F) \rangle^S \to\langle \wedge W \rangle^S$. For $w \in F$ and 
$b \in \wedge W$, we see that $\overline{\mu}^*\circ \eta_F(w)(b) = \overline{\mu}(b)(w)$. 
On the other hand, it follows that 
$\varphi_w(1\otimes 1\otimes b) = 1\cdot (\eta_X(w)\circ \widetilde{\mu})(1\otimes b) = \widetilde{\mu}^*\eta_X(w)(1\otimes b) = (j^*\overline{\mu}^*\eta_F(w))(b) = 
 \overline{\mu}(b)(w)$. Observe that $\theta(w) = (\varphi_w, \pi(w))$ by definition. This completes the proof. 
\end{proof}

\begin{cor} \label{cor:F}
The map $\F(\theta) : \F(\langle (A(K)\otimes \wedge W)_* \rangle) \to \F(X)$ induced by $\theta$ is a natural quasi-isomorphism with respect to relative Sullivan algebras under $A(K)$. 
\end{cor}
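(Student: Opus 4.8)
The plan is to verify directly, from Definition~\ref{defn:LocalSystems}(3), that $\F(\theta)_\sigma\colon \F(\langle (A(K)\otimes\wedge W)_*\rangle)_\sigma \to \F(X)_\sigma$ is a quasi-isomorphism of CDGA's for every simplex $\sigma\in K$, and to reduce this statement, by local constancy, to a single vertex, where Proposition~\ref{prop:FromKS-ex} applies. First I would note that $\pi\colon X\to K$ is a Kan fibration by the hypothesis on (*), and that the projection $q\colon \langle (A(K)\otimes\wedge W)_*\rangle \to K$ is a Kan fibration as well: by Lemma~\ref{lem:minimality} the local system $(A(K)\otimes\wedge W)_*$ is a $1$-connected $A$ minimal model, so the proof of Proposition~\ref{prop:FromKS-ex} (via \cite[Proposition 5.5]{G-H-T}) shows its realization sits in a Kan fibration over $K$. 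Consequently both $\F(X,\pi)$ and $\F(\langle (A(K)\otimes\wedge W)_*\rangle, q)$ are $A$-algebras, since $\F(\ )$ restricts to a functor $\mathcal{K}_K \to \mathcal{A}_K^{\text{op}}$; in particular, by Definition~\ref{defn:A-algebra}, the cohomology local systems $H(\F(X,\pi))$ and $H(\F(\langle (A(K)\otimes\wedge W)_*\rangle, q))$ are locally constant.

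Now fix $\sigma\colon \Delta[n]\to K$ and let $v=\sigma\circ v_0$ be its $0$th vertex; the inclusion $v_0\colon \Delta[0]\to\Delta[n]$ is a morphism $v\to\sigma$ in the category $K$, hence induces a commutative square whose vertical arrows are the restriction maps $\F(X)_\sigma\to\F(X)_v$ and $\F(\langle (A(K)\otimes\wedge W)_*\rangle)_\sigma\to\F(\langle (A(K)\otimes\wedge W)_*\rangle)_v$, and whose horizontal arrows are $\F(\theta)_\sigma$ and $\F(\theta)_v$. By local constancy the vertical arrows are quasi-isomorphisms, so $\F(\theta)_\sigma$ is a quasi-isomorphism if and only if $\F(\theta)_v$ is. Applying the same argument to the two vertices of an edge of $K$ shows that whether $\F(\theta)_v$ is a quasi-isomorphism is independent of $v$, since $K$ is connected; so it suffices to treat the basepoint $\ast\in K$. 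There, the pullback of $\pi$ along $\ast\colon\Delta[0]\to K$ is the fibre $F$, and by Proposition~\ref{prop:FromKS-ex} the pullback of $q$ along $\ast$ is $\langle\wedge W\rangle^S$; under these identifications $\F(\theta)_\ast$ is the map $A(\theta_{res})\colon A(\langle\wedge W\rangle^S)\to A(F)$ induced by the restriction $\theta_{res}$. By Proposition~\ref{prop:FromKS-ex}, $\theta_{res}$ is the rationalization ($\R$-localization) of the simply connected space $F$; since the $A$-cohomology of a simplicial set computes its rational (respectively real) cohomology, on which a rationalization (respectively $\R$-localization) is an isomorphism, $A(\theta_{res})$ is a quasi-isomorphism, and the corollary follows.

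I expect the only delicate point to be the identification of $\F(\theta)_\sigma$ with $A$ applied to the induced map of pullbacks $X^\sigma\to\langle (A(K)\otimes\wedge W)_*\rangle^\sigma$ — this is just unwinding $\F(\ )_\sigma = A((\ )^\sigma)$ together with functoriality of $\F$, but it should be recorded explicitly — together with citing the precise form of local constancy of $H(\F(\ ))$ for a Kan fibration, which I would take from \cite[Proposition 5.3]{G-H-T} and Definition~\ref{defn:A-algebra} (or \cite[18.9 Proposition]{Halperin}). An alternative would be to pass to global sections via Proposition~\ref{prop:DGA} and the quasi-isomorphism $\widetilde{\alpha}$ of (\ref{eq:u}); but since \cite[12.27 Theorem]{Halperin} only asserts that $\Gamma$ \emph{preserves} quasi-isomorphisms, this route would need an extra reflection statement for $A$-algebras over a connected base, so the vertexwise reduction above seems the cleaner path.
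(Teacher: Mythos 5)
Your proposal is correct and follows essentially the same route as the paper: the paper's proof also restricts along a vertex $v$ of $\sigma$, uses that the maps $X^v\to X^\sigma$ and $\langle (A(K)\otimes\wedge W)_*\rangle^v\to\langle (A(K)\otimes\wedge W)_*\rangle^\sigma$ are weak equivalences (your ``local constancy'' step, applied via $A(\ )$), and concludes from the fact that $\theta_{\mathrm{res}}$ is the rationalization that $A(\theta^\sigma)$ is a quasi-isomorphism. Your explicit reduction to the basepoint through edges of the connected base $K$ is a small tidying of a point the paper glosses over, but it is not a different argument.
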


\begin{proof}
For a simplex $\sigma : \Delta[n] \to K$ and the vertex $v : \Delta[0] \to \Delta[n]$ which defines $F$, we have a commutative diagram 
\[
\xymatrix@C15pt@R18pt{
X^\sigma \ar[d]_{\theta^\sigma}& X^v \ar[l]_{\simeq} \ar[d]_{\theta^v}& F  \ar@{=}[l] \ar[d]^{\theta_{\text{res}}} \\
\langle (A(K)\otimes \wedge W)_* \rangle^\sigma & \langle (A(K)\otimes \wedge W)_* \rangle^v \ar[l]_-{\simeq}& \langle \wedge W \rangle^S. \ar[l]_-{\cong}
}
\]
Since $\theta_{\text{res}}$ is the localization, it follows that $A(\theta^\sigma) :  A(\langle (A(K)\otimes \wedge W)_* \rangle^\sigma) = 
\F(\langle (A(K)\otimes \wedge W)_* \rangle)_\sigma \to A(X^\sigma)=\F(X)_\sigma$ is a quasi-isomorphism. 
The naturality of $\theta$ yields that of $\F(\theta)$. 
\end{proof}

\begin{cor} \label{cor:F2}
The adjoint $ad(\theta) : (A(K)\otimes W)_* \to \F(X)$ is a natural quasi-isomorphism of $A$-algebras with respect to relative Sullivan algebras under $A(K)$. 
\end{cor}

\begin{proof} The naturality follows from that of $\theta$. Theorem \ref{thm:adjoint} enables us to obtain a commutative diagram 
\[
\xymatrix@C20pt@R18pt{
R_*:= (A(K)\otimes W)_* \ar[r]^-{ad(\theta)} \ar[d]_{ad(id_{R_*})}& \F(X) \\
\F(\langle (A(K)\otimes \wedge W)_* \rangle). \ar[ru]_-{\F(\theta)}
}
\]
For a simplex $\sigma : \Delta[n] \to K$ and a vertex $v : \Delta[0] \to \Delta[n]$, we have commutative squares 
\[
\xymatrix@C20pt@R18pt{
((A(K)\otimes \wedge W)_*)_\sigma \ar[r]^\simeq \ar[d]_{ad(id_{R_*})_\sigma}& ((A(K)\otimes \wedge W)_*)_v  \ar[d]_{ad(id_{R_*})_v} \ar[r]^-{\cong}& A(0)\otimes \wedge W \ar[d]_{ad} \\
A(\langle (A(K)\otimes \wedge W)_* \rangle^\sigma) \ar[r]_-\simeq &A( \langle (A(K)\otimes \wedge W)_* \rangle^v) \ar[r]_-{\cong}& A(\langle \wedge W \rangle^S),
}
\]
where $ad$ in the right-hand side denotes the unit map (the adjunction map) obtained by the adjointness of the functor $A$ and the realization functor. 
By \cite[10.1 Theorem (ii)]{B-G}, we see that $ad$ is a quasi-isomorphism and hence so is  $ad(id_{R_*})_\sigma$. This completes the proof. 
\end{proof}

\begin{ex} \label{ex:Alg_models} 
Let $A$ be the simplicial CDGA $(A_{PL}^*)_\bullet$ of polynomial forms. 
Let $M$ be a pointed connected diffeological space and $p : S^D(M) \to K(\pi_1^D(M), 1)=:K$ the fibration 
with the fibre $\widetilde{S^D(M)}$ of finite type 
such as in 
Theorem \ref{thm:A-localization}. 
By the result \cite[I, Theorem 10.10]{G-J} due to Quillen, we see that the topological realization $|\widetilde{S^D(M)}| \to |S^D(M)| \to |K|$ is a fibration with simply-connected fibre. 
Each $N$ of the three simplicial sets in the fibration $p$ is a Kan complex, so that the unit 
$N \stackrel{\simeq }{\to} S|N|$ is a homotopy equivalence; see also \cite[I, Lemma 3.4]{G-J}. Therefore, we have a diagram of the form (\ref{eq:u}) via a relative Sullivan model for the topological realization for $p$. 
Suppose further that 
\[
\text{ (P) :  $\pi_1^D(M)\cong \pi_1(S^D(M))$ acts on $H^*(\widetilde{S^D(M)}; \Q)$ nilpotently. }
\]
Then, the result \cite[20.3. Theorem]{Halperin} (see also \cite[Theorem 5.1]{FHTII}) implies that the map $\overline{\mu}$ in the diagram (\ref{eq:u}) is a quasi-isomorphism. 
By virtue of Proposition \ref{prop:FromKS-ex}, we have a fibrewise rationalization 
\[
\xymatrix@C15pt@R20pt{
f\ell :  |S^D(M) |_D \ar[r]^-{|\theta|_D}  & | \langle (A(K)\otimes \wedge W)_* \rangle |_D = M_\text{fib$\Q$}
}
\]
for which $|\theta_\text{res}| : |\widetilde{S^D(M)}|_D \to |\langle (\wedge W, \delta) \rangle^S|_D$ is the usual rationalization. 
By Lemma \ref{lem:minimality}, we see that $ (A(K)\otimes \wedge W)_*$ is a $1$-connected $A$ minimal model and then, by definition, it is the minimal local system model for $M$; 
see Section \ref{section:FR}. 
 \end{ex}

In what follows, we call a pointed connected diffeological space $M$ {\it nilpotent} if the topological realization $|S^D(M)|$ is a nilpotent space. 
As in Example \ref{ex:Alg_models}, a result in rational homotopy theory for topological spaces may be applicable for diffeological spaces via the topological realization functor. In fact, we have the following result. 




 \begin{thm}\label{thm:nilToMinimalLocalSystem}
Let $M$ be a pointed nilpotent diffeological space. 
Suppose further that $H^*(\widetilde{S^D(M)}; \Q)$ is locally finite.
Then the  $1$-connected $(A_{PL}^*)_\bullet$ minimal model $(A_{PL}^*(K)\otimes \wedge W)_*$  in Example \ref{ex:Alg_models} is a minimal local system model for $M$, where $K = K(\pi_1^D(M), 1)$. 
 \end{thm}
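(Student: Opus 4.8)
The plan is to show that the hypotheses force $M$ to satisfy the two standing assumptions of Example~\ref{ex:Alg_models} --- namely that the fibre $\widetilde{S^D(M)}$ of the canonical fibration $p : S^D(M) \to K := K(\pi_1^D(M),1)$ is of finite $\Q$-type, and that condition (P) of that example holds --- and then simply to quote the construction carried out there. As a preliminary reduction I would use that $S^D(M)$ is a Kan complex (\cite[Lemma 9.4 (1)]{Kihara1}), so that the unit $S^D(M) \to S|S^D(M)|$ is a weak equivalence; hence the nilpotence hypothesis on $|S^D(M)|$ transports to $S^D(M)$. In particular $\pi_1^D(M) \cong \pi_1(S^D(M))$ is a nilpotent group acting nilpotently on every $\pi_n(S^D(M))$, the realization of $\widetilde{S^D(M)}$ is the universal cover of $|S^D(M)|$, and $H^*(S^D(M);\Q) \cong H^*(|S^D(M)|;\Q)$ is locally finite.

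Condition (P) is then essentially formal: since $\pi_1(S^D(M))$ acts nilpotently on all homotopy groups of $S^D(M)$, it acts nilpotently on the integral homology of the universal cover $\widetilde{S^D(M)}$ --- a standard property of nilpotent spaces (see, e.g., \cite{B-K}) --- and a fortiori on $H^*(\widetilde{S^D(M)};\Q)$. This is exactly (P).

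For finiteness of the fibre I would argue as follows. Since $S^D(M)$ is nilpotent and $H^*(S^D(M);\Q)$ is locally finite, its minimal Sullivan model $(\wedge V, d)$ is of finite $\Q$-type, i.e.\ $\dim V^n < \infty$ for every $n \geq 1$; for $n = 1$ one uses that the group $\pi_1^D(M)$ is nilpotent with finite-dimensional abelianization $H_1(S^D(M);\Q)$, so its Malcev Lie algebra --- whose dual is $V^1$ --- is a finitely generated nilpotent Lie algebra, hence finite-dimensional. As recalled in Example~\ref{ex:relativeSullivanmodels} (see also \cite{FHT}), the induced relative minimal model $(\wedge V^1, d) \to (\wedge V, d) \to (\wedge V^{\geq 2}, \bar d)$ has $(\wedge V^{\geq 2}, \bar d)$ a minimal Sullivan model of $\widetilde{S^D(M)}$; equivalently, $\widetilde{S^D(M)}$ is simply connected with finite-dimensional rational homotopy groups. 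Either way $H^*(\widetilde{S^D(M)};\Q)$ is finite-dimensional in each degree, so $\widetilde{S^D(M)}$ is of finite $\Q$-type, which also gives the hypothesis that $H^*(A_{PL}(\widetilde{S^D(M)}))$ is of finite type required in Theorem~\ref{thm:A-localization}. I expect this step to be the main obstacle: extracting finiteness of the fibre from finiteness of the total space genuinely requires nilpotence, and the handling of $V^1$ (equivalently, of $\pi_1$) invokes the structure theory of --- possibly non-finitely-generated --- nilpotent groups rather than following formally from local finiteness of $H^*(S^D(M);\Q)$.

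With these two points verified, the construction of Example~\ref{ex:Alg_models} applies to $M$ verbatim: passing to the topological realization of $p$ one obtains a relative Sullivan model whose associated local system is $(A_{PL}^*(K) \otimes \wedge W)_*$, the map $u$ there is a quasi-isomorphism by \cite[20.3 Theorem]{Halperin} on account of (P), and Proposition~\ref{prop:FromKS-ex} identifies $|\theta|_D$ as a fibrewise rationalization of $M$. Finally, by Lemma~\ref{lem:minimality} the local system $(A_{PL}^*(K) \otimes \wedge W)_*$ is a $1$-connected $(A_{PL}^*)_\bullet$ minimal model over $K$, hence by the definition recalled in Section~\ref{section:FR} it is a minimal local system model for $M$, which is the claim.
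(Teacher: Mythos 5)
Your proposal is correct and takes essentially the same route as the paper: the paper's proof consists exactly of verifying condition (P) of Example \ref{ex:Alg_models} (which it does by citing \cite[Proposition 8.1]{FHTII}) and the finiteness hypothesis needed to apply \cite[20.3 Theorem]{Halperin} to the diagram (\ref{eq:u}) (citing \cite[Theorem 7.2]{FHTII}), and then invoking the argument of Example \ref{ex:Alg_models} together with Proposition \ref{prop:FromKS-ex}. Your two verification steps --- nilpotent action on the homology of the universal cover as the standard characterization of nilpotent spaces, and finite type of the fibre via the finite-type minimal Sullivan model and its degree-one part --- are just unpacked versions of those two citations.
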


\begin{proof}
By virtue of \cite[Proposition 8.1]{FHTII}, we see that the diffeological space $M$ satisfies the condition (P) in Example \ref{ex:Alg_models}. In view of \cite[Theorem 7.2]{FHTII}, the result \cite[20.3. Theorem]{Halperin} is applicable to the diagram (\ref{eq:u}). 
Thus, the same argument as in Example \ref{ex:Alg_models} with Proposition \ref{prop:FromKS-ex} enables us to obtain the result. 
\end{proof}

The uniqueness of fibrewise rationalization allows us to deduce the following proposition. 
 
\begin{prop}\label{prop:nilToMinimalLocalSystem} Let $\text{\em f}\Q\text{-}\mathsf{Diff}_*$ be the category of pointed connected diffeological spaces  $M$ for each which the rational cohomology of 
the universal cover $\widetilde{S^D(M)}$ is of finite type and $\text{\em fN$\Q$-}\mathsf{Diff}_{*}$ the full subcategory of $\text{\em f}\Q\text{-}\mathsf{Diff}_*$ consisting 
of pointed nilpotent diffeological spaces. Then, one has a commutative diagram 
\begin{eqnarray}\label{eq:*-construction}
\xymatrix@C35pt@R18pt{
\text{\em fN$\Q$-}\mathsf{Diff}_{*} \ar[r]^{\text{\em sub}} \ar[d]_{( \ )_*\circ S^D( \ )}& \text{\em f}\Q\text{-}\mathsf{Diff}_{*} \ar[d]^{( \ )_{\text{\em  fib}\Q}} \\
  \text{\em Ho}({\mathcal M}_\Q) \ar[r]^-{\simeq}_-{| \ |_D\circ \langle \ \rangle}& \text{\em fib$\Q$-Ho}(\mathsf{Diff}_{*})
}
\end{eqnarray}
of categories and functors, 
where $\text{\em sub}$ denotes the embedding functor, the lower equivalence is that in Theorem \ref{thm:summary} and 
$( \ )_{\text{\em  fib}\Q}$ denotes the fibrewise rationalization; see Section \ref{section:FR}. 
 \end{prop}

\begin{proof}
By essentially the construction of a $1$-connected $A$ minimal model in \cite[Section 5]{G-H-T}, we have a functor 
$L : \text{fib$\Q$-Ho}(\mathsf{Diff}_{*}) \to  \text{Ho}({\mathcal M}_\Q)$ which makes the lower triangle commutative; see also Theorem \ref{thm:A-localization}. 
The result \cite[Theorem 4.3]{G-H-T} asserts that for a quasi-isomorphism $\varphi : F' \to F$ of $A$-algebras and a $1$-connected $A$ minimal model 
$\mathcal{E}$, the map $\varphi_\sharp : [\mathcal{E}, F'] \to [\mathcal{E}, F]$ between the homotopy sets induced by $\varphi$ is bijective. 
Therefore, Corollary \ref{cor:F2} implies that the upper triangle with $L$ is commutative. 
\end{proof}

\begin{rem}\label{rem:nil_top}
As mentioned in Section \ref{eq:Diagram}, we have $S^D\circ C = S$ for the singular simplex functor $S : \mathsf{Top}\to \mathsf{Sets}^{\Delta^{\text{op}}}$; see the diagram (\ref{eq:Diagram}). 
Therefore, for a pointed connected nilpotent CW complex $X$ whose rational cohomology is locally finite, we can apply Theorem \ref{thm:nilToMinimalLocalSystem} to the diffeological space $C(X)$. It turns out that $(A_{PL}^*(K)\otimes \wedge W)_*$ in Proposition \ref{prop:FromKS-ex} is a minimal local system model for $X$, where 
$K = K(\pi_1(X), 1)$. 
\end{rem}

\begin{ex}\label{ex:LM} We construct a minimal local system model for the diffeological free loop space of a simply connected manifold. 

We begin by recalling the smoothing theorem \cite[Theorems 1.1 and 1.7]{Kihara} due to Kihara for a particular case. 
Let $M$ and $N$ be diffeological spaces and $C^\infty(M, N)$ the space of smooth maps from $M$ to $N$ endowed with the functional diffeology. 
Let $D : \mathsf{Diff} \to \mathsf{Top}$ be the D-topology functor in the diagram (\ref{eq:Diagram}). 
Then, we see that for the standard $n$-simplex $\Delta^n$, $D(\Delta^n)$ is homeomorphic to $\Delta^n$ the $n$ simplex which is 
a subspace of $\R^{n+1}$; see Section \ref{sect:Model} for $\Delta^n$ in $\mathsf{Diff}$. 

Moreover, 
the inclusion map 
$i : D(C^\infty(M, N)) \to C^0(DM, DN)$ to the function space with the compact-open topology is continuous; see \cite[Proposition 4.2]{C-S-W}. Thus, it follows that the functor $D$ induces a natural morphism $\xi :  S^D(C^\infty(M, N)) \to S(C^0(DM, DN))$ of simplicial sets, where  $S^D( \ )$ and $S( \ )$ are  
the singular simplex functors in the diagram (\ref{eq:Diagram}). 

Let $M$ be a simply-connected manifold and $L^\infty M$ the diffeological free loop space $C^\infty(S^1, M)$.  
We construct the minimal local system model for 
$L^\infty M$ over $\Q$ applying the procedure in Example \ref{ex:Alg_models}.
For our case, the smoothing theorem implies that 
the map $\xi : S^D(L^\infty M) \to S(L^cM)$ is a weak homotopy equivalence, where $L^cM := \mathsf{Top}(D(S^1), D(M))$ is the free loop space.   We observe that $D(S^1)$ and $D(M)$ are nothing but the topological spaces $S^1$ and $M$ obtained by forgetting the smooth structures, 
respectively. 

Let $A$ be the simplicial CDGA $(A_{PL}^*)_\bullet$. 
Consider the fibration $\pi : S^D(L^\infty M) \to K(\pi_1^D(L^\infty M), 1)=:K$. Then, we can construct the diagram (\ref{eq:u}) for $\pi$ so that the vertical morphisms factor through the sequence 
\[
A(K(\pi_1(L^cM), 1)) \to A(S(L^cM)) \to  A(S(\widetilde{L^cM}))
\]
with the quasi-isomorphism induced by $\xi$. Moreover, without loss of generality, we may assume that $\wedge V$ is the minimal model of $K(\pi_1(L^cM), 1)$. Since the fundamental group of $L^cM$ is a finitely generated abelian group, it follows that $V = V^1$ with the trivial differential. By construction, the inclusion $i$ in the diagram (\ref{eq:u}) is a minimal Sullivan model for the map  $v\circ \pi^*$. Thanks to the minimality, we see that 
the CDGA $\wedge V\otimes \wedge W$ is a minimal model for $L^cM$. 
Since $L^cM$ is a nilpotent space, it follows from Theorem \ref{thm:nilToMinimalLocalSystem} that $(A(K)\otimes \wedge W)_*$ constructed in Example \ref{ex:Alg_models} is a minimal local system model for $L^\infty M$.

For instance, let $M$ be the complex projective space ${\mathbb C}P^n$. It is well known that a minimal model for $L^cM$ is of the form 
$(\wedge (x, y)\otimes \wedge(\overline{x}, \overline{y}), d)$ with $d(x) = d(\overline{x})=0$, $d(y) = x^{n+1}$ and $d(\overline{y}) = (n+1)\overline{x}x^n$, where $\deg x =2$, $\deg y = 2(n+1)-1$, $\deg \overline{x} =1$ and  $\deg \overline{y} = 2(n+1)-2$; see, for example, \cite[Section 15 (c), Example 1]{FHT}. Then, we have a minimal local system model for $L^\infty M$ over $K=K(\pi_1^D(L^\infty M), 1)$ of the form 
$R_* := (A(K)\otimes \wedge(x, y, \overline{y}))_*$
for which there exist isomorphisms 
\[
((R_*)_\sigma, d) \cong A(n)\otimes_{e_\sigma, A(K)}(A(K)\otimes \wedge(x, y, \overline{y}))\cong 
A(n)\otimes \wedge(x, y, \overline{y})
\] 
of CDGA's 
for each $\sigma \in K_n$, where $d(x) = 0$, $d(y) = x^{n+1}$ and 
$$d(\overline{y}) = (n+1)(e_\sigma \circ v)(\overline{x})\otimes x^n.$$

The strategy above works well to construct an explicit  minimal local system model for the free loop space $L^\infty M$ of 
a more general simply connected manifold $M$ if we get the Sullivan minimal model for $M$; see \cite[Section 15 (c), Example 1]{FHT} again. 
\end{ex}

\section{A local system for an adjunction space in $\mathsf{Diff}$ and $\mathsf{Top}$}\label{sect:SS}

By using the pullback of local systems over a common simplicial set, we construct a spectral sequence converging to the singular de Rham cohomology of a diffeological adjunction space. In this section, let $A$ be an admissible simplicial CDGA unless otherwise specified. 

Let 
$
\xymatrix@C15pt@R18pt{P & N \ar[l]_-f \ar[r]^-i & M}$ be morphisms between connected diffeological spaces. 
These maps produce the diffeological adjunction space 
$P\cup_{N}M$ in $\mathsf{Diff}$ together with the quotient diffeology with respect to the projection $p : P\coprod M \to P\cup_{N}M$, where $P\coprod M$ is endowed with the sum diffeology; see Example \ref{ex:finaldiff}. 
The universality of the pullback gives rise to a unique map 
\[
\Theta : A(S^D(P\cup_{N}M)) \to A(S^D(P))\times_{A(S^D(N))}A(S^D(M)).
\]
We assume that  $S^D(f) : S^D(N) \to S^D(P)$ and $S^D(i) : S^D(N) \to S^D(M)$ are in 
${\mathcal S}_K$ for a connected simplicial set $K$; that is, we have a commutative diagram  
\[
\xymatrix@C30pt@R15pt{
S^D(P) \ar[dr]_-{p_1} & S^D(N) \ar[d]_-{p_0} \ar[l]_-{S^D(f)}\ar[r]^{S^D(i)} & S^D(M). \ar[dl]^-{p_2} \\
 & K &
}
\]
Assume further that each $p_i$ is a surjective map. 
We consider a diagram 
\begin{equation}\label{eq:DGAs}
{\small 
\xymatrix@C4pt@R15pt{
 A(S^D(P))\times_{A(S^D(N))}A(S^D(M)) & &
       \Gamma(\F(S^D(P)))\times_{\Gamma(\F(S^D(N)))}\Gamma(\F(S^D(M))) \ar[ll]_-{u\times u}^-{\cong} \\
 A(S^D(P\cup_{N}M)) \ar[u]^-{\Theta} \ar[d]_-{\phi^*} && \Gamma\big(\F(S^D(P))\times_{\F(S^D(N))}\F(S^D(M))\big) \ar[u]_{\eta_{\Gamma}}^\cong\\
A(S^D(P)\cup_{S^D(N)}S^D(M)) \ar[rr]_{v}^{\cong} & & \Gamma \big(\F(S^D(P)\cup_{S^D(N)}S^D(M))\big) \ar[u]_{\Gamma(\eta_{\F})}^\cong
}
}
\end{equation}
consisting of morphisms of CDGA's, in which $u$ and $v$ are isomorphisms described in the proof of Proposition \ref{prop:DGA} and $\phi : S^D(P)\cup_{S^D(N)}S^D(M) \to S^D(P\cup_{N}M)$ is a unique map induced by $S^D(f)$ and $S^D(i)$. 
Here $\F$ denotes the functor defined by (\ref{eq:fibrations}) in Section \ref{sect:local_systems}; see also the beginning of Section \ref{sect:LocalSystems}.
The global section functor $\Gamma$ is right adjoint to the constant sheaf functor; see, for example, \cite[I, 6]{M-M}. Then, it follows that 
$\Gamma: \mathsf{Set}^{K^{\text{op}}} \to \mathsf{Set}$ preserves limits and hence $\eta_\Gamma$ induced by the functor is an isomorphism. Moreover, Theorem \ref{thm:adjoint} yields that the functor $\F$ preserves colimits. The fact enables us to conclude that the natural map $\eta_\F$ is an isomorphism in ${\mathcal L}_K$ and then it induces the isomorphism $\Gamma(\eta_\F)$ of CDGA's.

\begin{prop}\label{prop:commutative_D}
The diagram (\ref{eq:DGAs}) is commutative. 
\end{prop}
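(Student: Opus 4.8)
The plan is to verify commutativity of the diagram (\ref{eq:DGAs}) by checking it ``pointwise'' — that is, by tracking where a single form $\omega \in A(S^D(P\cup_{N}M))$ goes under the two composites, using the explicit formulas for the maps $u$, $v$, $\Theta$, $\theta^*$, $\eta_\Gamma$ and $\Gamma(\eta_\F)$ recorded in the proof of Proposition \ref{prop:DGA}. There are really two squares to check: the upper square involving $u \times u$, $\Theta$, $\eta_\Gamma$ and the pullback $\Gamma(\F(S^D(P)))\times_{\Gamma(\F(S^D(N)))}\Gamma(\F(S^D(M)))$, and the lower square involving $\theta^*$, $v$, $\Gamma(\eta_\F)$. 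Since every horizontal arrow is an isomorphism, it suffices to chase in one direction.

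First I would treat the lower square. Start with $\omega \in A(S^D(P\cup_N M))$ and apply $\theta^*$ to land in $A(S^D(P)\cup_{S^D(N)}S^D(M))$, then apply $v$, which by the formula in Proposition \ref{prop:DGA} sends a form $g$ to the section $\sigma \mapsto i_\sigma^*(g)$ of $\F(S^D(P)\cup_{S^D(N)}S^D(M))$. Going the other way, one must observe that $\eta_\F$ is the canonical comparison of the colimit of the functor $\F$ applied to the pushout diagram with the functor $\F$ applied to the colimit; since $\F$ preserves colimits (Theorem \ref{thm:adjoint}), $\eta_\F$ is an isomorphism, and the key point is simply that $\F$ is \emph{functorial}, so $\F(\theta)$ intertwines the restriction-to-$\sigma$ maps on source and target. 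Then one uses naturality of $\Gamma$ together with the identity $\Gamma(\F(\theta))\circ v = \Gamma(\eta_\F)^{-1}\circ\big(\text{restriction of }\theta^* \big)$, which unwinds to a diagram-chase among simplices $\sigma$ in $K$ and simplices lying over them. The upper square is handled similarly: one uses that $u$ (hence $u\times u$) is the natural isomorphism of Proposition \ref{prop:DGA}, that $\Theta$ is by definition the universal map into the pullback induced by $A(i)$ and $A(f)$, and that $\F$ carries the pushout cocone $S^D(P)\cup_{S^D(N)}S^D(M)$ to the pullback cone on the global-section level because $\Gamma$ preserves limits; compatibility then reduces to the statement that $u$ is compatible with $A(S^D(i))$, $A(S^D(f))$, which is immediate from naturality of $u$.

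The main obstacle — really the only nonroutine point — is bookkeeping: one has three different incarnations of ``the same'' object (as a CDGA of forms on an adjunction space, as global sections of a pushout of local systems, as a pullback of global sections of local systems), and the three isomorphisms $u$, $v$, $\eta_\F$ must be pinned down precisely enough that the chase closes up on the nose rather than merely up to a further isomorphism. I would therefore organize the proof around the single commuting identity, valid for every simplex $\sigma:\Delta[n]\to K$ and every simplex $\tau$ over $\sigma$ in the relevant pullback/pushout of simplicial sets,
\[
\big(\Theta\omega\big)\big|_\tau \;=\; \big(v\,\theta^*\omega\big)_\sigma\,(\tau),
\]
and check it by substituting the defining formulas; once this is established for the ``outer'' composites, commutativity of each inner square follows because the intermediate arrows $\eta_\Gamma$, $\Gamma(\eta_\F)$ are the \emph{unique} maps making the respective universal diagrams commute. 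Finally I would remark that no convergence or algebraic input beyond Proposition \ref{prop:DGA}, Theorem \ref{thm:adjoint}, and the limit/colimit preservation properties of $\Gamma$ and $\F$ is needed, so the proof is purely formal.
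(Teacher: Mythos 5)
Your proposal is correct and takes essentially the same route as the paper: the paper's entire proof is the one-line assertion that a direct calculation gives $(v\times v)\circ \Theta = \eta_\Gamma \circ \Gamma(\eta_\F)\circ v \circ \theta^*$, which is exactly the pointwise identity you propose to verify by unwinding the formulas from Proposition \ref{prop:DGA} and the universal properties of $\Gamma$ and $\F$.
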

\begin{proof}
A direct calculation shows that $(v\times v)\circ \Theta = \eta_\Gamma \circ \Gamma(\eta_\F)\circ v \circ \phi^*$. 
\end{proof}

We construct a spectral sequence converging to the singular de Rham cohomology of the  diffeological adjunction space 
$P\cup_NM$.
To this end, we view the construction of the spectral sequence in a more general setting. 
Let $K$ be a connected simplicial set. 
We consider a commutative diagram 
\begin{equation}\label{eq:triangles}
\xymatrix@C30pt@R15pt{
X_1 \ar[dr]_-{p_1} & X_0 \ar[d]_-{p_0} \ar[l]_-i \ar[r]^j & X_2\ar[dl]^-{p_2} \\
 & K &
}
\end{equation}
in which each $p_i$ is surjective. 
We write $(X_1, X_0, X_2)_K$ for the diagram (\ref{eq:triangles}). A {\it morphism} $$(\psi_1, \psi_0, \psi_2)_{u} : (X_1, X_0, X_2)_K \to (X_1', X_0', X_2')_{K'}$$ between two triples is defined by maps $\psi_i : X_i \to X_i'$  and $u : K \to K'$ which are compatible with every composable maps.   

Our main theorem concerning the spectral sequence described in Introduction is as follows. 

\begin{thm}\label{thm:SS} In the commutative diagram (\ref{eq:triangles}), suppose that maps $p_0$, $p_1$ and $p_2$ are Kan fibrations over a pointed connected simplicial set $K$ and the map $i$ is injective.\\
\text{\em (1)} 
Then, there exists a first quadrant spectral sequence $\{E_r^{*,*}, d_r \}$ converging to the cohomology 
$H^*(A(X_1\cup_{X_0}X_2))$ as an algebra such that 
$E_2^{*,*} \cong H^*(K, {\mathcal H}_{\mathcal Q}^*)$
as a bigraded algebra, 
where ${\mathcal H}_{\mathcal Q}^*$ is the system of local coefficients associated with the local system 
$${\mathcal Q}:=\F(X_1)\times_{\F(X_0)}\F(X_2)$$
over $K$; see \cite[Chapter 14]{Halperin} for the cohomology with local coefficients. Moreover, for any $\sigma \in K$, one has an isomorphism 
$({\mathcal H}_{\mathcal Q}^*)_\sigma \cong H^*(A(F_1)\times_{A(F_0)}A(F_2) )$ with $F_i$ the fibre of $p_i$ for $i = 0, 1$ and $2$. \\
\text{\em (2)} The spectral sequence is natural with respect to triples $(X_1, X_0, X_1)$ of Kan fibrations in the sense that for any morphism, \[\Psi:=(\psi_1, \psi_0, \psi_2)_{id_K} : (X_1, X_0, X_2)_K \to (X_1', X_0', X_2')_{K}
\]
between triples with a common base simplicial set $K$, there exists a morphism of spectral sequence $\{\Psi_r\} : \{{}^\backprime E_r^{*,*}, {}^\backprime d_r\} \to \{E_r^{*,*}, d_r\}$. Here $\{{}^\backprime E_r^{*,*}, {}^\backprime d_r\}$  and $\{E_r^{*,*}, d_r\}$ denote the spectral sequences in (1) associated with the triples $(X_1', X_0', X_2')_{K}$ and  $(X_1, X_0, X_2)_K$, respectively. 
\end{thm}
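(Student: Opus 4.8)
The plan is to recognize $\{E_r^{*,*}, d_r\}$ as the cohomology spectral sequence of the local system ${\mathcal P} = \F(X_1)\times_{\F(X_0)}\F(X_2)$ over $K$ in the sense of \cite[Chapter 14]{Halperin}, and then to compute its abutment and its $E_2$-term. First I would pin down the abutment. Since $i$ is injective, $X_1\cup_{X_0}X_2$ is the pushout in ${\mathcal S}_K$ of the span $X_1 \leftarrow X_0 \to X_2$. By Theorem \ref{thm:adjoint}, $\F : {\mathcal S}_K \to {\mathcal L}_K^{\text{op}}$ is a left adjoint, hence carries colimits in ${\mathcal S}_K$ to colimits in ${\mathcal L}_K^{\text{op}}$, that is, to limits in ${\mathcal L}_K$; therefore $\F(X_1\cup_{X_0}X_2)\cong {\mathcal P}$ in ${\mathcal L}_K$. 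Applying the global section functor $\Gamma$, which preserves limits, and the natural isomorphism $\Gamma(\F(Y))\cong A(Y)$ of Proposition \ref{prop:DGA} (compatibly with the maps of the diagram (\ref{eq:DGAs}), by Proposition \ref{prop:commutative_D}), one obtains a natural isomorphism of CDGA's $\Gamma({\mathcal P})\cong A(X_1\cup_{X_0}X_2)$.

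Next I would identify the stalks and verify the two input conditions for Halperin's construction. For a simplex $\sigma : \Delta[n]\to K$, pullback along $\sigma$ commutes with the fibre product and with $\Gamma$, so ${\mathcal P}_\sigma = A(X_1^\sigma)\times_{A(X_0^\sigma)}A(X_2^\sigma)$, where $X_j^\sigma$ is the pullback of $p_j$ along $\sigma$. Since $p_j$ is a Kan fibration, $X_j^\sigma\to\Delta[n]$ is a Kan fibration over the contractible base $\Delta[n]$, so by right properness of $\mathsf{Set}^{\Delta^{\text{op}}}$ the fibre inclusion $F_j\hookrightarrow X_j^\sigma$ over a vertex is a weak equivalence, and for each $\alpha : \sigma\to\tau$ in $K$ the comparison map $X_j^\sigma\to X_j^\tau$ is a weak equivalence; hence $A(X_j^\sigma)\xrightarrow{\simeq}A(F_j)$ and $\alpha^* : A(X_j^\tau)\xrightarrow{\simeq}A(X_j^\sigma)$. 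Because $i$ injective makes both $X_0^\sigma\hookrightarrow X_1^\sigma$ and $F_0\hookrightarrow F_1$ injective, Proposition \ref{prop:extendability} gives that $A(X_1^\sigma)\to A(X_0^\sigma)$ and $A(F_1)\to A(F_0)$ are surjective. Consequently both fibre products are homotopy pullbacks, the short exact sequence $0\to B_1\times_{B_0}B_2\to B_1\oplus B_2\to B_0\to 0$ applies to each, and comparing the resulting Mayer--Vietoris sequences with the five lemma shows that ${\mathcal P}_\sigma\to A(F_1)\times_{A(F_0)}A(F_2)$ is a quasi-isomorphism and that $\alpha^*$ is a quasi-isomorphism on ${\mathcal P}$. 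Thus ${\mathcal H}_{\mathcal P}^* := H^*({\mathcal P})$ is locally constant, so it defines local coefficients on $K$, with $({\mathcal H}_{\mathcal P}^*)_\sigma\cong H^*(A(F_1)\times_{A(F_0)}A(F_2))$; and the same surjectivity argument applied to $\partial\Delta[n]\hookrightarrow\Delta[n]$ shows that ${\mathcal P}$ is extendable. With these two properties, \cite[Chapter 14]{Halperin} furnishes a first-quadrant, multiplicative spectral sequence with $E_2^{*,*}\cong H^*(K,{\mathcal H}_{\mathcal P}^*)$ converging to $H^*(\Gamma({\mathcal P}))=H^*(A(X_1\cup_{X_0}X_2))$ as an algebra, the product being inherited from the local system ${\mathcal P}$ with values in CDGA's. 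This proves (1).

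For (2), a morphism $\Psi=(\psi_1,\psi_0,\psi_2)_{id_K}$ induces, by functoriality of $\F$, a morphism of local systems $\F(X_1')\times_{\F(X_0')}\F(X_2')\to\F(X_1)\times_{\F(X_0)}\F(X_2)$, i.e. ${\mathcal P}'\to{\mathcal P}$, over $K$ (the arrows reverse since $\F$ is contravariant), and under the local triviality hypothesis the identifications $X_j^\sigma\cong\Delta[n]\times F_j$ make the stalk isomorphisms $({\mathcal H}_{{\mathcal P}}^*)_\sigma\cong H^*(A(F_1)\times_{A(F_0)}A(F_2))$ compatible with the maps induced on fibres. Since the cohomology spectral sequence of a local system is functorial in the local system, this yields the required morphism $\{\Psi_r\} : \{{}^\backprime E_r^{*,*},{}^\backprime d_r\}\to\{E_r^{*,*},d_r\}$.

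I expect the crux of the argument to be the verification of the input conditions for Halperin's machinery applied to ${\mathcal P}$ — namely extendability of ${\mathcal P}$ together with local constancy of ${\mathcal H}_{\mathcal P}^*$, and the accompanying identification of the stalks. Both reduce, via Proposition \ref{prop:extendability}, to surjectivity of the restriction maps $A(X_1^\sigma)\to A(X_0^\sigma)$, which is precisely where the hypothesis that $i$ is injective enters; once that is in place, the remaining steps are routine homotopy-pullback and five-lemma comparisons.
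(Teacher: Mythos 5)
Your overall strategy is genuinely different from the paper's: you apply Halperin's machinery directly to ${\mathcal P}=\F(X_1)\times_{\F(X_0)}\F(X_2)$, whereas the paper first replaces each $p_i$ by a minimal (hence locally trivial) Kan fibration, replaces $X_1$ by $X_1\times CX_0$ so that the comparison map between the minimal fibrations is still injective, proves extendability and local constancy for the resulting system ${\mathcal P}'$ in Lemma \ref{lem:ex-const}, and only then invokes \cite[Theorems 12.47 and 14.18]{Halperin}. Your identification of the abutment via $\Gamma({\mathcal P})\cong A(X_1\cup_{X_0}X_2)$, your stalk computation, and your proof that $H({\mathcal P})$ is locally constant are fine; they are the same fibre-product and five-lemma arguments (essentially \cite[Lemma 13.3]{FHT}) that the paper uses. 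The problem is the extendability of ${\mathcal P}$, which you dispose of in one clause.

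Surjectivity of each restriction $A(X_j^\sigma)\to A(X_j^{\sigma k})$ and of $i^*:A(X_1^\sigma)\to A(X_0^\sigma)$ does \emph{not} formally imply surjectivity of the induced map of fibre products $A(X_1^\sigma)\times_{A(X_0^\sigma)}A(X_2^\sigma)\to A(X_1^{\sigma k})\times_{A(X_0^{\sigma k})}A(X_2^{\sigma k})$: given $(e',g')$ downstairs you can lift $e'$ and $g'$ separately, but the lifts need not satisfy $i^*(e)=u^*(g)$, and the discrepancy, which lies in $\mathrm{Ker}\bigl(A(X_0^\sigma)\to A(X_0^{\sigma k})\bigr)$, must be lifted along $i^*$ into $\mathrm{Ker}\bigl(A(X_1^\sigma)\to A(X_1^{\sigma k})\bigr)$. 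This relative surjectivity is exactly where all the work in Lemma \ref{lem:ex-const} goes, and the paper's proof of it uses the decomposition $X_t^\sigma\cong\Delta[n]\times F_t$ coming from local triviality --- a property a general Kan fibration does not have, which is precisely why the paper takes the minimal-fibration detour. Your step can in fact be repaired without local triviality: lift $g'$ to $g\in A(X_2^\sigma)$, note that $u^*(g)$ and $e'$ agree on $X_0^{\sigma k}$ and hence (using injectivity of $i$) define an element of $A(X_0^\sigma\cup X_1^{\sigma k})\cong A(X_0^\sigma)\times_{A(X_0^{\sigma k})}A(X_1^{\sigma k})$, and extend it over $X_1^\sigma$ by Proposition \ref{prop:extendability}. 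But that is a different argument from the one you actually invoke, so as written your proof has a gap at the one point that carries the weight of the theorem.
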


\begin{rem} \label{rem:naturality}
The naturality of the spectral sequence in Theorem \ref{thm:SS} is generalized to cases of morphisms between two triples whose base simplicial set is different from each other.  In fact, for a triple 
${\mathcal T}:=(X_1', X_0', X_2')_K'$ and a simplicial map $u : K \to K'$, 
using the pullback $u^*p_i' \to K$ of each fibration $p_i'$ along $u$, we have a new triple $u^*{\mathcal T}'=(u^*p_1', u^*p_0', u^*p_2')_K$. Let  $\Psi:=(\psi_1, \psi_0, \psi_2) : {\mathcal T} \to {\mathcal T}'=(X_1', X_0', X_2')_{K'}$ be a morphism of triples. Then, we have a map ${\mathcal T} \to u^*{\mathcal T}'$ between triples with a common base simplicial set $K$ by means of the universality of the pullbacks. Therefore, Theorem \ref{thm:SS} (2) is applicable to the map ${\mathcal T} \to u^*{\mathcal T}'$. The consideration is applied in Example \ref{ex:compare} below.
\end{rem}

\begin{rem} (1) Let $F=\{F_\sigma\}_{\sigma \in K}$ be a local system with trivial differentials in which 
$\alpha^* :  F_\tau \to F_\sigma$ is an isomorphism for each 
$\alpha : \sigma \to \tau$ in $K$. Then, we see that the cohomology $H^*(K, F)$ is isomorphic to the cohomology of $K$ with 
the local coefficients $F_{\text{res}}=\{F_{\iota_\sigma^*\sigma} \}_{\sigma \in K}$ in the sense of Whitehead 
\cite[Chapter VI]{W}, where $\iota_\sigma : \Delta[0] \to \Delta[\dim \sigma]$ denotes the canonical inclusion for each $\sigma \in K$; 
see also \cite[Chapter 16]{Halperin} for the isomorphism. 


(2) If $X_1=X_0=X_2$ and the maps between them are the identity map, then the spectral sequence in Theorem \ref{thm:SS} is nothing but a simplicial version of 
the Leray--Serre spectral sequence with local coefficients converging to $H^*(A(X_1))$; see also \cite[Theorem 5.4]{K2020} for the spectral sequence of a fibration in $\mathsf{Diff}$ in the sense of Christensen and Wu \cite{C-W}.  
\end{rem}


Before proving Theorem \ref{thm:SS}, we give a toy example to which the spectral sequence is applicable.

\begin{ex}\label{ex:triangles}
Let 
$
\xymatrix@C15pt@R15pt{ M  & \ar[l]_-i  N \ar[r]^f & P }
$
be smooth maps between connected diffeological spaces. These maps give  rise to maps 
$
\xymatrix@C15pt@R15pt{S^D(M)  & S^D(N) \ar[l]_-{i_*}  \ar[r]^-{f_*}  & S^D(P)}
$ in $\mathsf{Set}^{\Delta^{\text{op}}}$. 
By considering the Moore--Postnikov tower for each singular simplex, we have a commutative diagram 
\[
\xymatrix@C15pt@R15pt{
S^D(M) \ar[d]_-{p_1} &S^D(N) \ar[d]_-{p_0} \ar[l]_-{i_*} \ar[r]^{f_*} & S^D(P)\ar[d]^-{p_2} \\
K(\pi_1(M), 1) & K(\pi_1(N), 1) \ar[l]^{i_K} \ar[r]_{f_K} & K(\pi_1(P), 1)
}
\]
in which $p_0$, $p_1$ and $p_2$ are Kan fibrations. The maps $i_*$ and $f_*$ factor through the bullbacks 
$S^D(M)^{i_K} \stackrel{p_1'}{\to} K(\pi_1(N), 1)$ and $S^D(P)^{f_K} \stackrel{p_2'}{\to} K(\pi_1(N), 1)$, respectively.
In particular, we have the decomposition $i_* = k \circ j : S^D(N) \to S^D(M)^{i_K} \to S^D(M)$. 
If $i$ is injective, then so is $j$. Thus, we have the triple $(p_1', p_0, p_2')$ of Kan fibrations over $ K(\pi_1(N), 1)$
to which we can apply Theorem \ref{thm:SS}.  
We observe that $S^D(M)^{i_K}$ is homotopy equivalent to $S^D(M)$ if $i$ induces the isomorphism on the fundamental groups. 
\end{ex}

\begin{proof}[Proof of Theorem \ref{thm:SS}] 
We recall the diagram (\ref{eq:triangles}). Then,  
we have a sequence of isomorphisms replacing $S^D(P)$, $S^D(N)$ and $S^D(M)$ in (\ref{eq:DGAs}) with the more general simplicial sets $X_1$, $X_0$ and $X_2$, respectively. 
As a consequence, the sequence allows us to deduce that 
$A(X_1\cup_{X_0} X_2) \cong A(X_1)\times_{A(X_0)}A(X_2)$.

We consider the local system $\mathcal{P}:= \F(X_1\cup_{X_0} X_2)$. Since the simplicial CDGA $A$, is extendable, it follows from the definition (\ref{eq:fibrations}) that $\mathcal{P}$ is extendable as a local system. 
We  prove that ${\mathcal P}$ is locally constant. As mentioned in the paragraph before Proposition \ref{prop:commutative_D}, Theorem \ref{thm:adjoint} tells us that the natural map 
\[
\eta_{\F} : \F(X_1\cup_{X_0} X_2) \stackrel{\cong}{\to} \F(X_1) \times_{\F(X_0)}\F(X_2) =: \mathcal{Q}
\] 
is an isomorphism in $\mathcal{L}_K$; see Section \ref{sect:LocalSystems}. 
Let $\alpha : \alpha^*\sigma = \tau \to \sigma$ be a morphism in $K$.  
We have a commutative diagram
\[
\xymatrix@C30pt@R15pt{
\F(X_1)_\sigma \ar@{->>}[r]^{i^*} \ar[d]_-{\alpha^*}^{\simeq}& \F(X_0)_\sigma \ar[d]_-{\alpha^*}^{\simeq}
 & \F(X_2)_\sigma \ar[l]  \ar[d]_-{\alpha^*}^{\simeq}\\ 
\F(X_1)_\tau \ar@{->>}[r]_{i^*}   & \F(X_0)_\tau & \F(X_2)_\tau. \ar[l]
}
\]
By assumption, the map $i$ is injective. Then, since $A$ is extendable, it follows that $i^*$ is an epimorphism; see  (\ref{eq:fibrations}) for the definition of the functor $\F$.  
By virtue of \cite[Lemma 13.3]{FHT}, we see  that the map 
$$(\alpha^*\times_{\alpha^*}\alpha^*) : {\mathcal Q}_\sigma =\F(X_1)_\sigma\times_{\F(X_0)_\sigma}F(X_2)_\sigma 
\to {\mathcal Q}_\tau$$ induced by $\alpha^*$ is a quasi-isomorphism. Then $H({\mathcal Q})$ and $H({\mathcal P})$ are locally constant. 
By applying \cite[Theorems 12.47 and 14.18]{Halperin} to the local system ${\mathcal P}$, we have a spectral sequence converging to 
$H^*(\Gamma(\mathcal{P}))$ with $E_2^{*,*} \cong H^*(K, {\mathcal H}_{{\mathcal P}}^*)$. 
We see that 
\begin{eqnarray*}
\Gamma(\mathcal{P}) \cong \Gamma({\mathcal Q})&\maprightud{\eta_\Gamma}{\cong} & \Gamma(\F(X_1)) \times_{\Gamma(\F(X_0))}\Gamma(\F(X_2))  \\
 &\cong & A(X_1)\times_{A(X_0)} A(X_2). 
\end{eqnarray*}
The second isomorphism follows from Proposition \ref{prop:DGA}; see (\ref{eq:DGAs}) for the isomorphism $\eta_\Gamma$. 
As a consequence, the spectral sequence converges to $H^*(X_1\cup_{X_0} X_2)$. 
%
As for the $E_2$-term, it follows that for any $\sigma \in K$, 
\begin{eqnarray*}
({\mathcal H}_{{\mathcal P}}^*)_\sigma &= & H^*(\F(X_1)_\sigma\times_{\F(X_0)_\sigma}\F(X_2)_\sigma) \\ 
& = & H^*(A(X_1^\sigma) \times_{A(X_0^\sigma)}A(X_2^\sigma)) \\
&\cong & H^*(A(X_1^x) \times_{A(X_0^x)}A(X_2^x))  \\ 
&= &  H^*(A(F_1 ) \times_{A(F_0)}A(F_2)),   
\end{eqnarray*}
where $x$ is a vertex of $\sigma$. By definition, we have the first equality; see \cite[12.43]{Halperin}. 
By assumption, the map $i : X_0 \to X_1$ is injective. Then, the result \cite[Lemma 13.4]{FHT} gives the isomorphism above. 
We have the assertion (1). 

The construction in \cite[Theorem 12.47]{Halperin} implies the naturality of the spectral sequence. This yields the assertion (2).  
\end{proof}


\subsection{An application of Theorem \ref{thm:SS} to a stratifold}

While Theorem \ref{thm:SS} is for a triple of  simplicial sets, the spectral sequence is also applicable to objects in $\mathsf{Diff}$. 
To see this, we consider appropriate diffeological adjunction spaces. 

In what follows, we may write $A(M)$ for $A(S^D(M))$ for a diffeological space $M$. Under the same condition as in Example \ref{ex:triangles}, 
the universality of pullback induces a map $\Theta :  A(P\cup_NM) \to  A(P)\times_{A(N)}A(M)$. 
Assume further that $N$ is a diffeological subspace of $M$ with the inclusion $i : N \to M$.
In general, we call a pair $(M, N)$ of a diffeological space $M$ and its diffeological subspace $N$ a {\it relative space}. 
Then, the canonical map $\widetilde{\Theta} : M \to P \cup_NM$ gives a map  
$\widetilde{\Theta} : (M, N) \to (P\cup_NM, P)$ of relative spaces; see the paragraph before Lemma \ref{lem:Theta}.  
Thus,  these maps fit into the commutative diagram 
\begin{equation}\label{eq:Theta}
\xymatrix@C20pt@R15pt{
0 \ar[r] & A(P\cup_NM, P) \ar[d]_{\widetilde{\Theta}^*}\ar[r]& 
                  A(P\cup_NM) \ar[r] \ar[d]^{\Theta}& A(P) \ar[r] \ar@{=}[d] &0 \\
0 \ar[r] & A(M, N) \ar[r]& A(P)\times_{A(N)}A(M) \ar[r] & A(P) \ar[r] &0 
}
\end{equation}
in which two row sequences are exact and $\Theta$ is the morphism of CDGA's in the deagram (\ref{eq:DGAs}); see the proof of \cite[15.18]{Halperin}. 
By virtue of Proposition \ref{prop:commutative_D}, we have 
\begin{prop}\label{prop:well_attached} The following are equivalent. \text{\em (i)} The map $\phi^*$ in the diagram (\ref{eq:DGAs}) is a quasi-isomorphism.
\text{\em (ii)} $\widetilde{\Theta}^*$ is a quasi-isomorphism, \text{\em (iii)} $\Theta$ is a quasi-isomorphism.
\end{prop}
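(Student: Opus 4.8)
The plan is to read off all three equivalences from the two commutative diagrams already in hand, (\ref{eq:DGAs}) and (\ref{eq:Theta}), together with Proposition \ref{prop:commutative_D}; no new construction is needed, only a pair of diagram chases.

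First I would prove (i) $\Leftrightarrow$ (iii). In diagram (\ref{eq:DGAs}) every arrow other than $\Theta$ and $\theta^*$ is an isomorphism of CDGA's: the maps $u\times u$ and $v$ are the isomorphisms exhibited in the proof of Proposition \ref{prop:DGA}, while $\eta_\Gamma$ and $\Gamma(\eta_\F)$ are isomorphisms for the reasons recorded just before Proposition \ref{prop:commutative_D} (the functor $\Gamma$ preserves limits, and $\F$ preserves colimits by Theorem \ref{thm:adjoint}). Proposition \ref{prop:commutative_D} gives the commutativity, so chasing the diagram yields $(v\times v)\circ\Theta = \eta_\Gamma\circ\Gamma(\eta_\F)\circ v\circ\theta^*$. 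Since the three maps flanking $\theta^*$ are isomorphisms, $\Theta$ induces an isomorphism on cohomology if and only if $\theta^*$ does.

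Next I would prove (ii) $\Leftrightarrow$ (iii) using (\ref{eq:Theta}). That diagram is a morphism of short exact sequences of cochain complexes, carrying $\widetilde\Theta^*$ on the subcomplexes, $\Theta$ in the middle, and the identity of $A(P)$ on the quotients; exactness of the rows is the content of \cite[15.18]{Halperin}, and commutativity of the middle square is Proposition \ref{prop:commutative_D}, the other square being the obvious compatibility of inclusions and projections. Passing to the associated long exact sequences in cohomology and applying the five lemma, the fact that the quotient map is the identity on $H^*(A(P))$ forces $H^*(\widetilde\Theta^*)$ to be an isomorphism precisely when $H^*(\Theta)$ is. Combining this with the first equivalence proves the proposition. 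I do not anticipate a genuine obstacle; the only points needing care are the bookkeeping checks that (\ref{eq:Theta}) commutes on the nose and that the left-hand column of (\ref{eq:DGAs}) is indeed identified with the isomorphisms of Proposition \ref{prop:DGA}, both of which are routine once the definitions of $\Theta$, $\widetilde\Theta$ and $\theta$ are unwound.
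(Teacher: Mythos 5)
Your proposal is correct and follows exactly the argument the paper intends: the paper's (very terse) justification is precisely the commutativity of diagram (\ref{eq:DGAs}) established in Proposition \ref{prop:commutative_D}, with all flanking maps being isomorphisms, for (i) $\Leftrightarrow$ (iii), together with the ladder of short exact sequences (\ref{eq:Theta}) and the five lemma for (ii) $\Leftrightarrow$ (iii). Your write-up simply makes explicit the two routine diagram chases the paper leaves to the reader.
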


With the argument above, we give results concerning the singular de Rham cohomology of diffeological adjunction spaces. 

\begin{ex} \label{ex:an_example2} We use the simplicial singular de Rham complex $(A_{DR}^*)_\bullet$ as the admissible simplicial CDGA 
in order to consider the Souriau-de Rham cohomology of a stratifold. 

Let $S$ be 
an $n$-dimensional stratifold 
and 
$W$ a $s$-dimensional manifold with compact boundary $\partial W$ endowed with a collar 
$c : \partial W \times [0,  \varepsilon) \to W$, where $s > n$. Let $f: \partial W \to S$ be a smooth map. 
Then, we construct a parametrized stratifold of the form $S\cup_f W$. 
We refer the reader to \cite{Kreck} and Appendix \ref{app:appA} for the definition of a stratifold. 
The functor $k$ from the category of stratifolds to $\mathsf{Diff}$ in \cite{A-K}, which is also recalled in Appendix \ref{app:appA}, gives a diffeological space 
$k(S\cup_f W)$. By Theorem \ref{thm:p-stfd}, we have a smooth homotopy equivalence  $l :  k(S)\cup_{k(f)}k(W) \to k(S\cup_f W)$ in $\mathsf{Diff}$. 
Moreover, the excision axiom for the singular homology of a diffeological space (\cite[Proposition 3.1]{Kihara}) and the de Rham theorem (\cite[Theorem 2.4]{K2020}) allow us to deduce that the canonical map $$\widetilde{\Theta} : (k(W), k(\partial W)) \to (k(S)\cup_{k(f)}k(W), k(S))$$ gives rise to an isomorphism on the relative de Rham cohomology; 
see Lemma \ref{lem:Theta}. 
%
Therefore, Proposition \ref{prop:well_attached} implies that the map 
\[
\phi^* : A_{DR}(S^D(k(S)\cup_{k(f)}k(W))) \to A_{DR}(S^D(k(S))\cup_{S^D(k(f))}S^D(k(W)))
\] 
is a quasi-isomorphism. 

Suppose that the inclusion $\partial W \to W$ and the map $f$ induce isomorphisms on the fundamental groups, respectively. 
Then, in view of the isomorphism $H(\phi^*)\circ H(l)$, Theorem \ref{thm:SS} and Example \ref{ex:triangles} enable us to construct a spectral sequence converging to $H^*(A_{DR}(k(S\cup_f W)))$ as an algebra. 

%

As mentioned in Remark \ref{rem:tautological_map}, 
the de Rham theorem for the Souriau--de Rham complex does not hold in general. 
However, for a parametrized stratifold $S\cup_f W$, we have a sequence
\[
\xymatrix@C20pt@R15pt{
\Omega^*(k(S\cup_f W)) \ar[r]^-{\alpha}_-{\simeq} & A_{DR}^*(S^D(k(S\cup_f W)) \ar[r]^-{l^*}_-{\simeq}&  A_{DR}^*(S^D(k(S)\cup_{k(f)}k(W)))  
}
\]
of quasi-isomorphisms; see Proposition \ref{prop:factor_map} for the {\it factor map} $\alpha$.  
Thus, the spectral sequence constructed above converges to the Souriau--de Rham cohomology $H^*(\Omega^*(k(S\cup_f W)))$.
\end{ex}

\begin{ex} \label{ex:an_example} 
Let $X$ be a connected smooth CW complex in the sense of Iwase \cite{I} and $X_n$ the $n$-skeleton of $X$. 
Let $i : N := X_{k} \to X_{k+1}$ be the inclusion. Then a variant of Whitney's approximation theorem \cite[Theorem 3.2]{I} allows us to deduce that $i$ induces an isomorphism $i_* : \pi_1^D(N) \stackrel{\cong}{\to} \pi_1^D(X_{k+1})$ if $k > 1$. Here the smooth fundamental group of a diffeological space $X$ is regarded as that in \cite[Theorem 3.2 (9)]{C-W}. Thus, an element of the fundamental group is represented by a smooth map from $S^1$ to $X$. 

Let $P$ be a connected diffeological space and $N \to P$ 
a smooth map in $\mathsf{Diff}$ which induces an isomorphism on the fundamental groups.  
Consider the diffeological adjunction space $P\cup_NX_{k+1}$. 
Each open subset of a smooth CW complex for the  D-topology coincides with the open subset of the {\it underlying topology}; 
see \cite[Proposition 3.2]{I} and its previous sentence for the fact and the underlying topology. 
Therefore, by the same argument as in 
Example \ref{ex:an_example2} with Theorem \ref{thm:SS}, Example \ref{ex:triangles} and the excision axiom, 
we have a spectral sequence 
converging to $H^*(A(S^D(P\cup_NX_{k+1})))$ with 
$E_2^{*,*} \cong H^*(K, {\mathcal H}_{\mathcal Q}^*),$ 
where ${\mathcal H}_{\mathcal Q}^*$ is the system of local coefficients associated with the local system 
${\mathcal Q}:=\F(S^D(P))\times_{\F(S^D(N))}\F(S^D(X_{k+1}))$
over $K=K(\pi_1(N), 1)$. 
\end{ex}


In the following example, we compare two spectral sequences obtained by Theorem \ref{thm:SS}. In particular, the computation of the singular de Rham cohomology of the unreduced suspension of a manifold described in Proposition  \ref{prop:suspensions} is used in order to find non trivial elements in the cohomology of a more general diffeological adjunction space. 

\begin{ex}\label{ex:compare}
Let $K$ and $S$ be manifolds without boundaries and $M$ a closed manifold. 
We assume that $M$ and $K$ are connected, and $S$ has at least two connected components. 
Let $j_1 : (M\coprod M) \times K \to S$ be a smooth map. We define a map 
$j :  (M\coprod M)\times K \to S\times K$ by $j(m , k) = (j_1(m, k), k)$. 
Consider a commutative diagram of triples   
\[
\xymatrix@C20pt@R12pt{
(M\times I) & (M\coprod M) \ar[l]_{in}\ar[r]^f & (*\coprod *) \\
(M\times I)\times K \ar[u]^{\pi_1}\ar[rd]_{p_1}& (M\coprod M)\times K \ar[d]_{p_0}\ar[u]^{\pi_0} \ar[l]_{in\times 1}\ar[r]^-j & S\times K, \ar[ld]^{p_1} \ar[u]_{\pi_2}\\
 & K &
}
\]
where $\pi_1$ and $\pi_0$ are projections in the first factor, $p_i$ is the projection in the second factor 
for $i =0, 1$ and $2$. Here, we regard the manifold $M\times I$ with boundary as the diffeological space given by a relevant stratifold via the functor $k$ in \cite{A-K}; see Theorem \ref{thm:p-stfd}. 
By definition, the adjunction space $(M\times I)\cup_{(M\coprod M)}(*\coprod *)$ is the unreduced suspension $\Sigma M$ of $M$; see Appendix \ref{app:appB}. 

Let $Z_{(M, S)}$ be a diffeological adjunction space of the form 
$$(M\times I)\times K\cup_{(M\coprod M)\times K}(S\times K).$$ Then, the maps $\pi_1$, $\pi_0$ and $\pi_2$ give rise to a smooth map $\widetilde{\pi} : Z_{(M, S)} \to \Sigma M$.  The first row in the diagram above is regarded as a triple over the point $*$. 
Thanks to Theorem \ref{thm:SS} (2) and Remark \ref{rem:naturality}, we see that 
the trivial map $\rho: K \to *$ induces a morphism $\{\Psi_r\} : \{{}^\backprime E_r^{*,*}, {}^\backprime d_r\} \to \{E_r^{*,*}, d_r\}$ of spectral sequences constructed with the simplicial singular de Rham complex $A:=(A_{DR}^*)_\bullet$.

Moreover, the argument in Example \ref{ex:an_example2} (i) enables us to deduce that the spectral sequences $\{{}^\backprime E_r^{*,*}, {}^\backprime d_r\}$ and $\{E_r^{*,*}, d_r\}$ converge to the singular de Rham cohomology algebras $ H^*(A(K))\otimes H^*(A(\Sigma M))$ and $H^*(A(Z_{(M, S)}))$,  
respectively. Observe that the adjunction space of the pullback of the first row along $\rho$ is nothing but the product 
$K\times \Sigma M$. 

Since the fibration which gives rise to ${}^\backprime E_r^{*,*}$ is trivial, the system of local coefficients in the $E_2$-term is simple. Moreover, the spectral sequence converges to $H^*(A(K))\otimes H^*(A(\Sigma M))$. 
Thus, we see that  the spectral sequence collapses at the $E_2$-term. 
This implies that for $1\otimes \xi \in {}^\backprime E_2^{0, *}$, the element $\Psi_2(1\otimes \xi)$ is a permanent cycle in the spectral sequence $\{E_r^{*,*}, d_r\}$; see Propositions \ref{prop:suspensions} and \ref{prop:formsOnSigma M} for the explicit form of $H^*(A(\Sigma M))$.  
We observe that the system of local coefficients in the $E_2$-term of the spectral sequence $\{E_r^{*,*}, d_r\}$ is not necessarily trivial because $j$ is not the product of maps in general.

We conclude this example with the following assertion. 
\begin{assertion}\label{assertion:nonzero_cycles} Under the notation above, 
suppose that $H^i(A(S))= 0$ for $0< i< l$. Then 
$\Psi_2(1\otimes \xi) \neq 0$ provided $\xi \neq 0$ in $H^*(A(\Sigma M))$ and $\deg \xi \leq l$.
\end{assertion}

Thus, under the same assumption as in Assertion \ref{assertion:nonzero_cycles}, we may find many nontrivial permanent cycles in $\{E_r^{*,*}, d_r\}$. As a consequence, we see that $\widetilde{\pi}^*(\xi)\neq 0$ if $\deg \xi \leq l$ 
for the map $\widetilde{\pi}^* : H^*(A(\Sigma M)) \to H^*(A(Z_{(M, S)}))$. Moreover, since 
$\xi \wedge \xi' = 0$ in $H^*(A(\Sigma M))$ if $\deg \xi$ and $\deg \xi'$ are greater than or equal to $1$, it follows that 
$\widetilde{\pi}^*(\xi)\wedge \widetilde{\pi}^*(\xi')= 0$ in $H^*(A(Z_{(M, S)}))$.


\begin{proof}[Proof of Assertion \ref{assertion:nonzero_cycles}]
The naturality of 
the spectral sequence (Theorem \ref{thm:SS} (2)) and 
the cohomology with local coefficients allow us to obtain a commutative diagram 
\[
\xymatrix@C20pt@R12pt{
{}^\backprime E_r^{0,*} \ar[r]^{\Psi_2} \ar[d]_-{i'}^-\cong& E_r^{0,*} \ar[d]^{i} \\
H^*(A(M\times I)\times_{A(M\coprod M)}A(*\coprod *)) \ar[r]_-{t^*}&
H^*(A(M\times I)\times_{A(M\coprod M)}A(S)),
}
\]
where $i$ and $i'$ denote the edge homomorphisms of the spectral sequences and $t$
is the map between fibres induced by the map 
$j_1 ( \ , *) : M\coprod M \to S$. 

Observe that the edge homomorphisms are injective; see, for example, the argument before \cite[(16.16)]{Halperin}. As mentioned above, all differential in $\{{}^\backprime E_r^{*,*}, {}^\backprime d_r\}$ are trivial. 
Then, it follows from \cite[(16.16)]{Halperin} that $i'$ is an isomorphism. 

In what follows, we use the same notation for an element in cohomology as that for its representative. 
Let $\wedge V_\alpha$ be a minimal model for each connected component $S_\alpha$ ($\alpha \in B$) of $S$. By \cite[Lemma 13.3]{FHT}, we see that the homomorphism $t^*$ is induced by a sequence of chain maps 
\[
\xymatrix@C15pt@R10pt{
        & A(M\times I)\times_{A(M\coprod M)}\coprod_{\alpha \in B} A(S_\alpha) \\
F:= A(M\times I)\times_{A(M\coprod M)}A(*\coprod *) \ar[ru]^{\widetilde{t}} &
A(M\times I)\times_{A(M\coprod M)}\prod_{\alpha \in B}(\wedge V_\alpha) \ar[u]_{\simeq}
}
\]
including a quasi-isomorphism, where 
$\widetilde{t}((\eta, m)) = (\eta, A(\pi_2 \circ i_1)(m))$ with  $i_1 : S \to S \times *$ the inclusion. The computation in Proposition \ref{prop:suspensions} shows that a cycle $\xi$ in $F$ with degree greater than or equal to $1$ is of the form 
$(\eta ,0)$. Indeed, the map $\kappa$ in (\ref{eq:xi}) is a quasi-isomorphism. Therefore, we may assume that $\xi = (\eta, 0)$ for some $\eta$ in $A(M\times I)$. 

We see that $t^*((\eta, 0)) = (\eta, 0) \in 
(A(M\times I)\times_{A(M\coprod M)} \prod_{\alpha \in B}(\wedge V_\alpha))^{\deg \eta}$. Since $V_\alpha^i = 0$ for $0< i < l$ by assumption, it follows that 
$$(A(M\times I)\times_{A(M\coprod M)}\prod_{\alpha \in B}(\wedge V_\alpha))^i = (A(M\times I)\times_{A(M\coprod M)} 0)^i$$ for $0< i < l$. 
Thus, for degree reasons, we see that $t^*((\eta, 0)) \neq 0$ if $\xi \neq 0$ in $H^*(A(\Sigma M))$ and $\deg \xi = \deg \eta \leq l$. In fact, if $t^*((\eta, 0)) = 0$, then there exists an element $c$ in $A(M\times I)$ such that $d(c) = \eta$. This implies that $\xi = (\eta, 0) = 0$ in $H^*(A(\Sigma M))\cong H^*(A((M\times I)\times_{A(M\coprod M)}A(*\coprod *)))$, which is a contradiction. 
\end{proof}
\end{ex}

\subsection{A local system of an adjunction space in $\mathsf{Top}$}\label{sect:Top}
In this section, we work in the category $\mathsf{Top}$ and give applications of local systems to adjunction spaces. 

Consider a commutative diagram 
\begin{equation}\label{eq:Top_triangles}
\xymatrix@C30pt@R15pt{
X \ar[dr]_-{p_1} & C \ar[d]_-{p_0} \ar[l]_-i \ar[r]^j & Y\ar[dl]^-{p_2} \\
 & B &
}
\end{equation}
with a connected space $B$ 
in which $p_i$ is a fibration with fibre $F_i$ for $i = 0, 1$ and $2$. 
We denote by $\texttt{hocolim} \ \! (i, j)$ or  $\texttt{hocolim} \ \! (\xymatrix@C10pt@R10pt{\! X& C \ar[l]_i \ar[r]^j & Y})$ the homotopy colimit (homotopy pushout) of the maps $i$ and $j$. 
Let $A$ denote the  simplicial CDGA $(A_{PL}^*)_\bullet$ and $S(X)$ the singular simplicial set of a space $X$.
We may write $A(X)$ for $A(S(X))$.  

\begin{thm}\label{thm:NewAssertions} {\em (1)} Suppose that the map $i$ in the diagram {\em{(\ref{eq:Top_triangles})}} is a closed cofibration. Then there exists a natural quasi-isomorphism 
$\F(S(X\cup_CY)) \stackrel{\simeq}{\to}  \F(S(X)) \times_{\F(S(C))}\F(S(Y))$
of $A$-algebras. \\
{\em (2)} 
Suppose that the map $i$ in the diagram {\em{(\ref{eq:Top_triangles})}} is injective. 
Then there exists a zig-zag of natural quasi-isomorphisms of $A$-algebras connecting  
$\F(S(\texttt{\em hocolim} \ \! (i, j)))$ with $\F(S(X)) \times_{\F(S(C))}\F(S(Y))$
\end{thm}

In order to prove the theorem above, we recall the {\it mapping cylinder of a fibre map} due to McAuley \cite{McAuley}.  
In the diagram (\ref{eq:Top_triangles}), we can replace $i$ with a cofibration without changing the weak homotopy type of the fibre $p_1$.  
To see this, consider a commutative diagram 
\begin{equation}\label{eq:triangle1}
\xymatrix@C30pt@R15pt{
X \ar@/_3.5mm/[drr]_-{p_1} & (C\times I)\cup_{\widetilde{f}}X \ar@{.>}[l]_(0.6)u^(0.6)\simeq \ar@{.>}[dr]_{\overline{p_1}}& C \ar[d]^-{p_0} \ar@/_4.5mm/[ll]_-f  \ar@{.>}[l]^-i\\ 
  & & B 
}
\end{equation}
in $\mathsf{Top}$ of solid arrows in which $p_i$ is a fibration for $i =0$ and $1$.  Then, the decomposition $u\circ i$ of $f$ via the mapping cylinder $\text{Cyl}(f):= (C\times I)\cup_{\widetilde{f}}X$ fits in the commutative diagram with solid and dotted arrows. Here $\widetilde{f} : C\times \{0\} \to X$ is defined by 
$\widetilde{f}(a, 0) = f(a)$ and 
$\overline{p_1}$ is defined by $\overline{p_1}(a, t)= p_0(a)$ for $(x, t) \in A\times I$ 
and $\overline{p_1}(x) = p_1(x)$ for $x \in X$. 
By the result \cite[Corollary 2]{McAuley}, we see that $\overline{p_1}$ is a fibration and then the fibre of  $\overline{p_1}$ is weak homotopy equivalent to 
the fibre of $p_1$. The argument above is used in the proof of Theorem \ref{thm:NewAssertions} (2).

\begin{proof}[Proof of Theorem \ref{thm:NewAssertions} ] (1) 
For a simplicial map $\sigma : \Delta[n] \to S(B)$, let $\widetilde{\sigma} \in S_n(B)$ denote the map $\Delta^n \to B$ corresponding to 
$\sigma$. We consider a commutative diagram 
\begin{equation}
\xymatrix@C30pt@R15pt{
S(X)^\sigma \ar[d] \ar[r]^\zeta & S(X^{\widetilde{\sigma}})  \ar[d]  \ar[r] & S(X) \ar[d]^-{S(p)} \\
 \Delta[n] \ar[r]_t & S(\Delta^n) \ar[r]_{S(\widetilde{\sigma})}& S(B)
}
\end{equation}
consisting of pullback diagrams. Observe that $\sigma = S(\widetilde{\sigma})\circ t$. Since $\Delta[n]$ and $S(\Delta^n)$ are contractible, it follows that 
$\zeta$ is a weak homotopy equivalence. 

By assumption, the map $i$ is a closed cofibration. Then the result \cite[Proposition]{Kieboom} enables us to deduce 
that the natural projection $\overline{p} : X\cup_CY \to B$ obtained by $p_i$ is a fibration.  Moreover, it follows from \cite[Theorem]{Kieboom} that the natural map 
$C^\tau \to  X^\tau$ induced by $i$ between pullbacks along arbitrary map $\tau : B' \to B$ is a closed cofibration.  
Mather's Cube Lemma \cite{Mather} allows us to obtain a homotopy fibration of the form 
$F_1\cup_{F_0}F_2 \to  X\cup_CY \stackrel{\overline{p}}\to B$; see also \cite[Proposition 2.3]{C-S}. By the same argument, we have a homotopy fibration 
$F_1\cup_{F_0}F_2 \to  X^{\widetilde{\sigma}}\cup_{C^{\widetilde{\sigma}}}Y^{\widetilde{\sigma}} \stackrel{\overline{p}}\to \Delta^n$. Therefore, the canonical map $\Psi : X^{\widetilde{\sigma}}\cup_{C^{\widetilde{\sigma}}}Y^{\widetilde{\sigma}} \to (X\cup_CY)^{\widetilde{\sigma}}$ is a weak homotopy equivalence.  

Recall the unique map $\phi : S(X^{\widetilde{\sigma}})\cup_{S(C^{\widetilde{\sigma}})}S(Y^{\widetilde{\sigma}}) \to 
S(X^{\widetilde{\sigma}}\cup_{C^{\widetilde{\sigma}}}Y^{\widetilde{\sigma}})$ described in the diagram (\ref{eq:DGAs}). 
As mentioned in Section \ref{sect:perspectives}, the singular  simplex functor $S : \mathsf{Top} \to \mathsf{Set}^{\Delta^{\text{op}}}$ coincides with the composite $S^D\circ C$. 
Recall the diagram (\ref{eq:Theta}) in the setting of this theorem. 
Since the map $C^{\widetilde{\sigma}} \to  X^{\widetilde{\sigma}}$ is a cofibration, it follows that $\widetilde{\Theta}^*$ in (\ref{eq:Theta}) is a quasi-isomorphism. Then, by Proposition \ref{prop:well_attached}, we see that the map $\phi$ above is also a quasi-isomorphism. Consider a commutative diagram 
\begin{equation}
\xymatrix@C35pt@R10pt{
S(X\cup_CY)^\sigma  \ar[r]^\zeta_\simeq & S((X\cup_CY)^{\widetilde{\sigma}}) \\
 & S(X^{\widetilde{\sigma}}\cup_{C^{\widetilde{\sigma}}}Y^{\widetilde{\sigma}}) \ar[u]_{S(\Psi)}^\simeq \\
S(X)^\sigma\cup_{S(C)^\sigma}S(Y)^\sigma \ar[r]_(0.48){\zeta_3 := \zeta\cup_\zeta\zeta} \ar[uu]^\gamma & S(X^{\widetilde{\sigma}})\cup_{S(C^{\widetilde{\sigma}})}S(Y^{\widetilde{\sigma}}), \ar[u]_\phi^\simeq
  }
\end{equation}
where $\gamma$ is  the natural map defined by the universality of the colimit. Moreover, we consider a commutative diagram 
\[
\xymatrix@C35pt@R18pt{
A(S(X^{\widetilde{\sigma}})\cup_{S(C^{\widetilde{\sigma}})}S(Y^{\widetilde{\sigma}})) \ar[d]_{\zeta_3} \ar[r]^-{\eta_A}_-\cong&
      A(S(X^{\widetilde{\sigma}})\times_{ A(S(C^{\widetilde{\sigma}})}   A(S(Y^{\widetilde{\sigma}})) \ar[d]^{A(\zeta)\times_{A(\zeta)} A(\zeta)}\\
A(S(X)^\sigma\cup_{S(C)^\sigma}S(Y)^\sigma) \ar[r]_-{\eta_A}^-\cong &  A(S(X)^{\sigma})\times_{ A(S(C)^{\sigma})}  A(S(Y)^{\sigma})
}
\]
in which $\eta_A$ denotes the map defined by the universality of the limit. Since $A$ is the left adjoint to the realization functor, it follows that each $\eta_A$ is an isomorphism. We see that the natural maps $S(C^{\widetilde{\sigma}}) \to S(X^{\widetilde{\sigma}})$ and 
$S(C)^{\sigma} \to S(X)^{\sigma}$ defined by $i$ are injective.  It follows from \cite[Lemma 13.3]{FHT} that $A(\zeta)\times_{A(\zeta)} A(\zeta)$ is a quasi-isomorphism. As a consequence, the composite $\eta_A\circ A(\gamma) : A(S(X\cup_CY)^\sigma ) \to A(S(X)^{\sigma})\times_{ A(S(C)^{\sigma})}  A(S(Y)^{\sigma})$ for each $\sigma \in S(B)$ gives rise to the quasi-isomorphism required.  

(2) The homotopy colimit $\texttt{hocolim} \ \! (i, j)$ is regarded as 
the double mapping cylinder, namely, 
the pushout of two mapping cylinders  which are in the first sequence of the commutative diagram 
\[
\xymatrix@C30pt@R10pt{
\text{Cyl}(i) \ar[dr]_-{\overline{p_1}} & C \ar[d]_-{p_0} \ar[l]_-{\widetilde{i}} \ar[r]^{\widetilde{j}} & \text{Cyl}(j). \ar[dl]^-{\overline{p_2}} \\
 & B &
}
\]
Here $\overline{p_i}$ is the fibration induced by $p_i$ for $i =1$ and $2$; see the argument after Theorem \ref{thm:NewAssertions}. 
Since the map $i$ is injective, the restriction of $S(i)$ is also injective. The extendability of $A$ allows us to deduce that 
$A(u)\times_{A(id_C)} A(v) : A(S(X)^\sigma)\times_{A(S(C)^\sigma)}A(S(Y)^\sigma) 
\stackrel{\simeq}{\to}A(S(\text{Cyl}(i))^\sigma)\times_{A(S(C)^\sigma)}A(S(\text{Cyl}(j))^\sigma)$ is a quasi-isomorphism, where 
$u : \text{Cyl}(i) \stackrel{\simeq}{\to} X$ and $v : \text{Cyl}(j) \stackrel{\simeq}{\to} Y$ are the homotopy equivalences.
Hence, the result follows from (1). 
\end{proof}

\begin{ex}\label{ex:TopLoopSp} Let $M$ be the complex projective space $\mathbb{C}P^n$. Let $L^cM$ and $\Omega^c M$ be the space of continuous free loops on $M$ and the subspace of based loops, respectively. 
We construct a tractable local system model for the homotopy colimit 
$\texttt{hocolim} \ \! (\xymatrix@C10pt@R10pt{\!L^cM &  \Omega^c M \ar[r]^i \ar[l]_i & L^cM}\!)$ by using the minimal local system model described in Example \ref{ex:LM}.  

With the same notation as in Example \ref{ex:LM}, we have a commutative diagram
\[
\xymatrix@C30pt@R14pt{
 & \wedge (x, y) \ar[dl]_{\simeq} \ar@{>->}[r]& \wedge (x, y, \overline{x}, \overline{y}) \ar[dl]_{\simeq}^{w_1} \ar[r]^{\widetilde{i}} & \wedge ( \overline{x}, \overline{y})\ar[dl]_(0.65){\simeq}^(0.6){w_2} \\
A(M) \ar[r]_-{ev^*} & A(L^cM) \ar[r]_-{i^*} & A(\Omega^c M) & \wedge ( \overline{x}) \ar[ld]_-{\simeq}^-{w_3} \ar@{.>}[u]_{\widetilde{p_0}} \ar@{.>}[ul]^(0.3){\widetilde{p_1}}|(0.5)\hole \\
 & &  A(K(\pi_1(L^cM), 1))  \ar[u]_{p_0^*}\ar[ul]^{p_1^*}  &
}
\]
of solid arrows in which the upper squares consist of a relative Sullivan model for the evaluation map $ev :L^cM \to M$ at zero and a model $\widetilde{i}$ 
for the inclusion 
$i : \Omega^c M \to L^cM$. Observe that $K(\pi_1(L^cM), 1)=K(\pi_1(\Omega^c M), 1)$.

The Lifting lemma \cite[Proposition 12.9]{FHT} gives morphisms $\widetilde{p_i}$ which fit in the commutative square diagrams up to homotopy.  In the lower homotopy commutative square consisting of $w_1$, $w_3$, $p_1^*$ and $\widetilde{p_1}$, we see that $p_1^*w_3 (\overline{x}) = w_1\widetilde{p_1} (\overline{x})$ in the cohomology $H^*(L^cM)$. Then there exists an element $z \in A^1(L^cM)$ such that $p_1^*w_3 (\overline{x}) - w_1\widetilde{p_1} (\overline{x}) = d(z)$. We define $w_1' : \wedge (x, y, \overline{x}, \overline{y}) \to A(L^cM)$ by $w_1'(\overline{x}) = w_1(\overline{x}) + d(z)$. It follows that  the lower square is commutative and $w_1 \simeq w_1'$. Moreover, by the same argument as above, we can define $w_2' :  \wedge (\overline{x}, \overline{y}) \to A(\Omega^cM)$ so that  the upper right-hand square is commutative and $w_2 \simeq w_2'$. Therefore, the right triangular prism is commutative with $w_i'$ instead of $w_i$ for $i = 1$ and $2$. 

In view of Corollary \ref{cor:F2}, we have a commutative diagram 
\[
\xymatrix@C30pt@R14pt{
(A(K)\otimes \wedge (x, y, \overline{y}))_* \ar[r]^-{ad(\theta)}_-\simeq \ar[d]_{(\widetilde{i})_*}& \F(S(L^cM))\ar[d]^{\F(i)} \\
(A(K)\otimes \wedge (\overline{y}))_* \ar[r]_-{ad(\theta)}^-\simeq & \F(S(\Omega^c M)), \
}
\]
where $K = K(\pi_1(L^cM), 1)=K(\pi_1(\Omega^c M), 1)$. By Theorem \ref{thm:NewAssertions} (2), we have a zig-zag of quasi-isomorphisms  
of $A$-algebras connecting $\texttt{hocolim} \ \! (\xymatrix@C10pt@R10pt{\!L^cM &  \Omega^c M \ar[r]^i \ar[l]_i & L^cM}\!\!)$ and 
$$
(A(K)\otimes \wedge (x, y, \overline{y}))_*\times_{(A(K)\otimes \wedge (\overline{y}))_*}(A(K)\otimes \wedge (x, y, \overline{y}))_*,
$$
which is a local system model for the homotopy colimit. 
\end{ex}

By virtue of Theorem \ref{thm:NewAssertions}, we establish a spectral sequence for an adjunction space over a space in $\mathsf{Top}$. 

\begin{thm}\label{thm:Top_SS} Suppose that the map $i$ in the diagram {\em (\ref{eq:Top_triangles})} is injective. Then, 
there exists 
a first quadrant spectral sequence $\{E_r^{*,*}, d_r \}$ converging to the cohomology 
$H^*(A(\texttt{\em hocolim}(i, j))) = H^*(\texttt{\em hocolim}(i, j); \Q)$ as an algebra such that 
$E_2^{*,*} \cong H^*(B, {\mathcal H}_{\mathcal Q}^*)$
as a bigraded algebra. Here, ${\mathcal H}_{\mathcal Q}^*$ is a system of local coefficients 
over $S(B)$ which satisfies the condition that, for any $\sigma \in S(B)$, 
$$({\mathcal H}_{\mathcal Q}^*)_\sigma \cong H^*(A(F_1)\times_{A(F_0)}A(F_2))$$ with $F_i$ the fibre of $p_i$ for $i = 0, 1$ and $2$. 
Moreover, the spectral sequence is natural with respect to triples of fibrations over $B$ in the same sense as in Theorem \ref{thm:SS} (2). 
\end{thm}

\begin{proof}
We apply the proof of Theorem \ref{thm:SS} to a local system of the form $\mathcal{P} = \F(S(X)\cup_{S(C)}S(Y)) \cong \F(S(X)) \times_{\F(S(C))}\F(S(Y))= \mathcal{Q}$. Then, Theorem \ref{thm:NewAssertions} (2) 
enable us to obtain the spectral sequence converging to $H^*(\texttt{hocolim}(i, j); \Q)$. 
We see that $S(X)^{\widetilde{b}} = S(p_1^{-1}(b))$ for $b\in B$, where $\widetilde{b} : \Delta[0] \to S(X)$ denotes the constant map at $b$. 
Thus, the formula for $({\mathcal H}_{\mathcal Q}^*)_\sigma$ follows from that in Theorem \ref{thm:SS}. 
\end{proof}

\begin{rem}\label{rem:simple_LC}
In the Theorem \ref{thm:Top_SS}, assume that the base space $B$ is simply connected. Then, since the system ${\mathcal H}_{\mathcal Q}^*$ of local coefficients is simple, it follows that $E_2^{*, *} \cong H^*(B; \Q)\otimes H^*(A(F_1)\times_{A(F_0)}A(F_2) )$ as a bigraded algebra. 
\end{rem}

\begin{rem}
The restriction $i_| : F_0 \to F_1$ is also injective. Then, by using \cite[Lemma 13.3]{FHT} and the same argument as in the proof above with (\ref{eq:Theta}), we have a sequence of quasi-isomorphisms  between $A(F_1)\times_{A(F_0)}A(F_2)$ and $A(\texttt{hocolim}(i_|, j_|)))$. Then, the spectral sequence above is regarded as the Leray--Serre spectral sequence for a homotopy fibration of the form 
$\texttt{hocolim}(i_|, j_|)) \to \texttt{hocolim}(i, j)) \to B$; see \cite[Proposition 2.3]{C-S} for such a fibration. 
\end{rem}

\medskip
\noindent
{\it Acknowledgements.} The author is grateful to Hiroshi Kihara for several helpful conversations about the model structure on the 
category of diffeological spaces. The author would like to thank Yukiho Tomeba for discussing the de Rham cohomology of diffeological adjunction spaces. Author's grateful thanks are extended to Jean-Claude Thomas for his explanation of the results in \cite{F-T}, which enables the author 
to deduce Theorem \ref{thm:nilToMinimalLocalSystem}. The author would also like to thank the referee for very careful reading and valuable comments on a version of this manuscript without which he could not have obtained the refined proof of Theorem \ref{thm:SS}, Proposition \ref{prop:formsOnSigma M} and the results in Section \ref{sect:Top}. 
This work was partially supported by JSPS KAKENHI Grant Numbers JP19H05495 and JP21H00982

\appendix

\section{A stratifold as a diffeological space}\label{app:appA}

Stratifolds \cite{Kreck} are important examples of diffeological spaces. 
In this section, after reviewing the notion and a functor which assigns to a stratifold a diffeological space, we investigate a parametrized stratifold from the view point of diffeology. As a consequence, we have Theorem \ref{thm:p-stfd} which completes the argument 
in Example \ref{ex:an_example2}. 
In order to define a general stratifold,  
we recall a differential space in the sense of Sikorski \cite{Sik}. 
\begin{defn} \label{defn:differential_space}
A {\it  differential space} is a pair $(S, \C)$ consisting of a topological space $S$ and an $\mathbb{R}$-subalgebra $\C$
of the $\mathbb{R}$-algebra $C^0(S)$ of continuous real-valued functions on $S$, which is assumed to be {\it locally detectable} and 
$C^\infty$-{\it closed}.  

\medskip
Local detectability means that $f \in \C$ if and only if 
for any $x \in S$, there exist an open neighborhood $U$ of $x$ and an element $g \in \C$ such that 
$f|_U = g|_U$.  

\medskip
$C^\infty$-closedness means that for each $n\geq 1$, each $n$-tuple $(f_1, ..., f_n)$ of maps in $\C$ and each smooth map 
$g : \mathbb{R}^n \to \mathbb{R}$, the composite  
$h : S \to \mathbb{R}$ defined by $h(x) = g(f_1(x), ...., f_n(x))$ belongs to $\C$.
\end{defn}

Let $(S, \C)$ be a differential space and $x$ an element in $S$. The vector space consisting of derivations on the $\mathbb{R}$-algebra 
$\C_x$ of the germs at $x$ is denoted by $T_xS$, which is called the {\it tangent space} 
of the differential space  at $x$; see \cite[Chapter 1, section 3]{Kreck}. 

\begin{defn} \label{defn:stratifold} 
An $n$-{\it dimensional stratifold} is a differential space $(S, \C)$ such that the following four conditions hold: 

\begin{enumerate}
\item
$S$ is a locally compact Hausdorff space with countable basis; 
\item for each $x\in S$, the dimension of the tangent space $T_xS$ is less than or equal to $n$ and 
the {\it skeleta} $sk_t(S):= \{x \in S \mid \text{dim } \!T_xS\leq t\}$ are closed in $S$;
\item
for each $x \in S$ and open neighborhood $U$ of $x$ in $S$, 
there exists a {\it bump function} at $x$ subordinate to $U$; that is, a non-negative function $\phi \in \C$ such that 
$\phi(x)\neq 0$ and such that the support $\text{supp }\! \phi :=\overline{\{p \in S \mid f(p) \neq 0\}}$ is contained in $U$; 
\item
the {\it strata} $S^t := sk_t(S) - sk_{t-1}(S)$ are $t$-dimensional smooth manifolds such that each restriction along 
$i : S^t \hookrightarrow S$ induces an isomorphism of stalks 
$
i^* : \C_x \stackrel{\cong}{\to} C^\infty(S^t)_x
$
for each $x \in S^t$.
\end{enumerate}
\end{defn}

Observe that for a manifold $M$ without boundary, we can regard $M$ as the stratifold $j(M) :=(M, \C_M)$ defined by $\C_M=\{\phi : M \to \R \mid \phi : \text{smooth} \}$; see \cite{Kreck}. 
Let $(S_1, \C_1)$ and $(S_2, \C_2)$ be stratifolds and $h : S_1 \to S_2$ a continuous map. We call the map $h$, denoted 
$h  : (S_1, \C_1) \to (S_2, \C_2)$, a morphism of stratifolds if $\phi\circ h \in \C_1$ for every $\phi \in \C_2$. 
Thus, we have the category $\mathsf{Stfd}$ of stratifolds. 

We recall from \cite{A-K} that a functor  
$k :  \mathsf{Stfd} \to \mathsf{Diff}$
is defined by 
$k(S, \C) = (S, \D_\C)$ and $k(f) = f$ for a morphism $f : S \to S'$ of stratifolds, where 
\[
\D_\C:=\Set{u : U \to S | 
\begin{array}{l}
\text{$U :$ open in ${\R^q}, q \geq 0$,} \\
\text{$\phi\circ u \in C^\infty(U)$ for any  $\phi \in \C$} 
\end{array} }. 
\]
Observe that a plot in $\D_\C$ is a set map.  We see that 
the functor $k$ is faithful, but not full in general. It is worth mentioning that the fully faithful embedding $m : \mathsf{Mfd} \to 
\mathsf{Diff}$ in \cite{IZ} from the category $\mathsf{Mfd}$ of manifolds is indeed an embedding $j$ followed by $k$; that is, we have a sequence of functors
\[
\xymatrix@C40pt@R12pt{
m : \mathsf{Mfd} \ar[r]_-{\text{\tiny fully faithful}}^-j  & \mathsf{Stfd} \ar[r]^-k &\mathsf{Diff}. 
}
\]
Here the functor $j$ is defined by the assignment $M \mapsto j(M)$ mentioned above. 
We refer the reader to \cite[Section 5]{A-K} for the details of the functors. 

A parametrized stratifold is constructed from a manifold attaching another manifold with compact boundary. More precisely, let $(S, \C)$ be 
an $n$-dimensional stratifold. Let 
$W$ be an $s$-dimensional manifold with compact boundary $\partial W$ endowed with a collar 
$c : \partial W \times [0,  \varepsilon) \to W$. Suppose that $s > n$. Let $f : \partial W \to S$ be a morphism of stratifolds. 
We consider the adjunction topological space $S':=S\cup_f W$. Define the subalgebra of $C^0(S')$ by 
\[
\C_{\text{ad}}^{(S, W)}\!=\!\Set{ g : S' \to \R  | \!
\begin{array}{l}
g_{| S} \in \C,  g_{|W\backslash \partial W} \ \text{is smooth} \  \text{and for some positive real}  \\ \text{number $\delta < \varepsilon$}, 
\text{$gc(w, t)=gf(w)$ for $w \in \partial W$ 
and $t < \delta$}  
\end{array} }. 
\]
Then the pair $(S', \C_{\text{ad}}^{(S, W)})$ is a stratifold; see  \cite[Example 9]{Kreck} for more details. 
A stratifold constructed by attaching inductively manifolds with such a way is called 
a {\it parametrized stratifold} ($p$-stratifold for short). 


Let $(S, \C)$ be a stratifold and $W$ a manifold with compact boundary 
$\partial W$ mentioned above. 

\begin{thm}\label{thm:p-stfd}
The natural map $l :k(S, \C)\cup_{k(f)} k(W, \C_W) \to k(S\cup_fW, \C')$ is a smooth homotopy equivalence in $\mathsf{Diff}$, where $(W, \C_W)$ denotes a stratifold of the form $(\partial W\cup_{id}W, \C_{\text{\em ad}}^{(\partial W, W)})$. 
\end{thm}

We recall results in \cite{K2020} and \cite{Kreck} which are used repeatedly when proving the theorem. 
\begin{lem}\label{lem:A} \text{\em (\cite[Lemma A.7]{K2020})}
For a stratifold $(S, \C)$,  an open subset of the underlying topological space $S$ is a $D$-open subset of the diffeological space $k(S, \C)$; see Section \ref{sect:diffspaces} for the D-topology. 
\end{lem}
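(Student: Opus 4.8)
The plan is to unwind the definition of $D$-openness and reduce the statement to a pointwise argument powered by the bump-function axiom (condition (3) in Definition \ref{defn:stratifold}). Recall that a subset $A \subseteq S$ is $D$-open in $k(S, \C) = (S, \D_\C)$ precisely when $u^{-1}(A)$ is open in the domain $U$ for every plot $u : U \to S$ in $\D_\C$. So, fixing an open subset $A$ of the underlying space $S$ and an arbitrary plot $u : U \to S$ in $\D_\C$, I would show that $u^{-1}(A)$ is open by exhibiting, around each of its points, an open neighborhood contained in it.

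First I would fix $r \in u^{-1}(A)$, so that $u(r) \in A$. Since $A$ is an open neighborhood of $u(r)$ in $S$, condition (3) of Definition \ref{defn:stratifold} furnishes a bump function $\phi \in \C$ at $u(r)$ subordinate to $A$; that is, $\phi \geq 0$, $\phi(u(r)) \neq 0$ and $\text{supp}\,\phi \subseteq A$. The key bridge between the diffeology and the topology is now the defining property of $\D_\C$: because $u$ is a plot, $\phi \circ u$ lies in $C^\infty(U)$, and in particular it is continuous on $U$.

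Next I would set $V := (\phi \circ u)^{-1}(\R \setminus \{0\})$. This is open in $U$, being the preimage of an open set under a continuous map, and it contains $r$ since $(\phi \circ u)(r) = \phi(u(r)) \neq 0$. For any $s \in V$ we have $\phi(u(s)) \neq 0$, so $u(s)$ lies in $\{p \in S \mid \phi(p) \neq 0\} \subseteq \text{supp}\,\phi \subseteq A$; hence $s \in u^{-1}(A)$. Thus $V$ is an open neighborhood of $r$ with $V \subseteq u^{-1}(A)$. As $r \in u^{-1}(A)$ was arbitrary and the plot $u$ was arbitrary, $u^{-1}(A)$ is open for every plot of $k(S, \C)$, which is exactly the assertion that $A$ is $D$-open.

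The point that requires care — rather than being a genuine obstacle — is that a plot $u \in \D_\C$ is a priori only a set map: the definition of $\D_\C$ does not presuppose that $u$ is continuous for the topology of $S$. One therefore cannot simply invoke continuity of $u$ to pull back the open set $A$. The whole argument hinges on the fact that the bump functions in $\C$ detect the topology of $S$ strongly enough — supports can be squeezed inside any prescribed open set — so that smoothness of the single composite $\phi \circ u$ already localizes $u$ into $A$. This is precisely where the stratifold axioms, as opposed to those of a bare differential space, are used.
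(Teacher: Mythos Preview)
Your argument is correct. The paper does not supply its own proof of this lemma; it merely cites \cite[Lemma A.7]{K2020}, so there is nothing to compare against directly. That said, the bump-function argument you give is precisely the expected one: the stratifold axiom (3) is exactly what is needed to separate points from closed sets using functions in $\C$, and composing with a plot converts this into an open condition on the domain. Your closing remark correctly isolates the one subtlety, namely that plots in $\D_\C$ are a priori only set maps, so continuity of $u$ is not available and the detour through $\phi\circ u$ is genuinely necessary.
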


\begin{lem}\label{lem:B} \text{\em (\cite[Proposition 2.4]{Kreck})}
Let $(S, \C)$ be a stratifold and $A$ a closed  subset of $S$. For a smooth map $g : U \to {\mathbb R}$ from an open neighborhood $U$ of $A$, there exists a smooth map $\widetilde{g} : S \to {\mathbb R}$ such that $\widetilde{g}|_A = g|_A$. 
\end{lem}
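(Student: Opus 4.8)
The plan is to reduce the extension problem to the construction of a single smooth cutoff function and then to build that cutoff from the bump-function axiom together with the point-set hypotheses on $S$. Throughout I write $g\colon U\to\R$ for the given function that is smooth on the open neighbourhood $U\supset A$ (the symbol $f$ in the statement denotes this same map). Concretely, I will produce $\phi\in\C$ with $\phi\equiv 1$ on $A$ and $\operatorname{supp}\phi\subset U$, and then set $\widetilde g:=\phi g$ on $U$ and $\widetilde g:=0$ on $S\setminus\operatorname{supp}\phi$. These two prescriptions agree on the overlap, where $\phi$ vanishes, so $\widetilde g$ is a well-defined function on all of $S$ because $\operatorname{supp}\phi\subset U$ forces $U\cup(S\setminus\operatorname{supp}\phi)=S$. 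That $\widetilde g$ lies in $\C$ follows from local detectability: near each $x\in\operatorname{supp}\phi\subset U$ smoothness of $g$ on $U$ gives $h\in\C$ with $g=h|_V$ on some $V\ni x$, whence $\widetilde g=(\phi h)|_V$ with $\phi h\in\C$ by $C^\infty$-closedness, while near each $x\notin\operatorname{supp}\phi$ one has $\widetilde g=0\in\C$, and these open sets cover $S$. Finally $\widetilde g|_A=(\phi g)|_A=g|_A$ since $\phi\equiv1$ on $A$, which is exactly the assertion.

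Everything therefore rests on constructing $\phi$, and I will do this by manufacturing a smooth partition of unity subordinate to the open cover $\{U,\ S\setminus A\}$ of $S$ (here $S\setminus A$ is open since $A$ is closed, and $A\subset U$ so the two sets cover $S$). Since $S$ is locally compact, Hausdorff and second countable by condition (1) of the definition of a stratifold, it is $\sigma$-compact and paracompact. For each $x\in S$ I choose, using the bump-function axiom of condition (3), a non-negative $\beta_x\in\C$ with $\beta_x(x)>0$ whose support lies in a neighbourhood of $x$ contained entirely in $U$ or entirely in $S\setminus A$; this single choice is possible for every $x$ because $A\subset U$ forces $S\setminus U\subset S\setminus A$. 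Exhausting $S$ by compacta $C_n$ with $C_n\subset\operatorname{int}C_{n+1}$ and covering each compact shell $C_n\setminus\operatorname{int}C_{n-1}$ by finitely many of the sets $\{\beta_x>0\}$ whose supports additionally avoid $C_{n-2}$ produces a countable, locally finite family $\{\beta_i\}$ whose positivity sets still cover $S$. Then $\beta:=\sum_i\beta_i$ is locally a finite sum of elements of $\C$, hence lies in $\C$ by local detectability, and $\beta>0$ everywhere; putting $\rho_i:=\beta_i/\beta$ gives $\sum_i\rho_i\equiv1$. Grouping the indices according to whether $\operatorname{supp}\beta_i\subset U$ or $\operatorname{supp}\beta_i\subset S\setminus A$ and letting $\phi$ be the sum over the first group yields $\phi\in\C$ with $\operatorname{supp}\phi\subset U$, while on $A$ the complementary sum vanishes, so $\phi\equiv1$ there.

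The main obstacle is the partition-of-unity construction, and inside it the two $\C$-membership checks: that a locally finite sum of elements of $\C$ again lies in $\C$, and that each quotient $\rho_i=\beta_i/\beta$ lies in $\C$. Both follow from local detectability together with $C^\infty$-closedness — for the quotient one applies $C^\infty$-closedness to a smooth map $\R^2\to\R$ realising $(s,t)\mapsto s/t$ on the region $t>0$, legitimately since $\beta$ is bounded below on a neighbourhood of each point by local finiteness — but they require care because $\C$ is only an abstract subalgebra of $C^0(S)$, not the full smooth function algebra of a manifold. The shell device securing local finiteness is the standard $\sigma$-compactness argument and presents no real difficulty once the exhaustion is in hand. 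I would add that, if the existence of smooth partitions of unity subordinate to open covers of a stratifold is already available as a prior result of \cite{Kreck}, the second paragraph collapses to a one-line citation, leaving only the reduction of the first paragraph to be carried out.
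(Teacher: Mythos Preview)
The paper does not prove this lemma; it is simply quoted from \cite[Proposition 2.4]{Kreck} and used as a black box. Your argument is correct and is essentially the proof one finds in Kreck's text: reduce to producing a cutoff $\phi\in\C$ with $\phi\equiv 1$ on $A$ and $\operatorname{supp}\phi\subset U$, then build $\phi$ from a smooth partition of unity subordinate to $\{U,\,S\setminus A\}$, the latter constructed from the bump-function axiom together with the $\sigma$-compactness of $S$. Your closing remark is on the mark: Kreck establishes smooth partitions of unity for stratifolds as a separate proposition immediately preceding this one, so in practice your second paragraph collapses to a citation and only the extension step in your first paragraph is needed. One small point you might make explicit for completeness is that $\operatorname{supp}\phi\subset U$ uses that a locally finite union of closed sets is closed.
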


We consider a smooth function $r : {\mathbb R} \to  {\mathbb R}$ which satisfies the condition that 
$r(t) = 0$ for 
$t \leq \frac{\varepsilon}{2}$, $0 \leq r(t) \leq t$ for $\frac{\varepsilon}{2} \leq t \leq \frac{3\varepsilon}{4}$ and 
$r(t) = t$ for $\frac{3\varepsilon}{4} \leq t$. 
Then the map $1\times r : \partial W \times [0, \varepsilon) \to \partial W \times [0, \varepsilon)$
is defined. Extending $1\times r$ to $\widetilde{1\times r} : W \to W$ and using 
$\widetilde{1\times r}$, we further define a map 
$\overline{r} : k(S\cup_f W) \to k(S)\cup_{k(f)}k(W)$ with $\overline{r} 
|_S = 1_S$.

\begin{lem}\label{lem:smoothness}
The map $\overline{r}$ is smooth in $\mathsf{Diff}$.
\end{lem}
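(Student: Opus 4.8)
The plan is to verify smoothness of $\overline r$ directly from the definition of the diffeology on the target. Since the diffeological adjunction space $k(S)\cup_{k(f)}k(W)$ carries the quotient diffeology with respect to the projection $p' : k(S)\coprod k(W)\to k(S)\cup_{k(f)}k(W)$ (where the coproduct has the sum diffeology), a parametrization into it is a plot precisely when it locally factors through a plot of $k(S)$ or of $k(W)$. Thus it suffices to show: for every plot $u : U\to k(S\cup_fW,\mathcal C')$ and every $r_0\in U$, there are an open neighbourhood $V\ni r_0$ and a plot of $k(S)$, resp.\ of $k(W)$, whose composite with $p'$ equals $\overline r\circ u|_V$.

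The geometric heart of the argument --- and the reason the cut-off $r$ is taken to vanish on the \emph{whole} interval $[0,\varepsilon/2]$ rather than only at $0$ --- is that $\overline r$ carries the open set $O_1:=q\bigl(S\coprod c(\partial W\times[0,\varepsilon/2))\bigr)$ of $S\cup_fW$ entirely into the $S$-part, and the open set $U_2:=q(W\setminus\partial W)$ into the $W$-part (recall $\partial W\subseteq W$), while $O_1\cup U_2=S\cup_fW$; here $q : S\coprod W\to S\cup_fW$ is the quotient map. First I would check that $O_1$ and $U_2$ are open, their $q$-preimages being open (using that the collar image is open in $W$) and saturated; Lemma \ref{lem:A} then makes them $D$-open in $k(S\cup_fW,\mathcal C')$, so $u^{-1}(O_1)$ and $u^{-1}(U_2)$ form an open cover of $U$ and one may split into the two cases $r_0\in u^{-1}(O_1)$ and $r_0\in u^{-1}(U_2)$. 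On $O_1$ the map $\overline r$ equals $p'_S\circ\rho_1$ with $\rho_1([s])=s$ for $s\in S$ and $\rho_1([c(x,t)])=f(x)$ for $t<\varepsilon/2$; on $U_2$ it equals $p'_W\circ\rho_2$ with $\rho_2([w])=\widetilde{1\times r}(w)$, and one checks these are well defined.

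It then remains to verify that the appropriate restriction of $\rho_1\circ u$ or of $\rho_2\circ u$ is a plot. In the $U_2$-case, after shrinking $V$ so that $u(V)$ lies in a coordinate chart $\Omega\subseteq W\setminus\partial W$ and inside a compact set $A\subseteq\Omega$ (hence closed in the Hausdorff space $S\cup_fW$), Lemma \ref{lem:B} extends each coordinate function $x_i|_\Omega$ to some $\widetilde{x_i}\in\mathcal C'$ with $\widetilde{x_i}|_A=x_i|_A$; then $x_i\circ u|_V=\widetilde{x_i}\circ u|_V$ is smooth because $u$ is a plot, so $u|_V$ is a smooth map into $W$, hence a plot of $k(W)$, and composing with the smooth self-map $\widetilde{1\times r}$ of $W$ (the identity off the collar and $c\circ(1\times r)\circ c^{-1}$ on it, the two agreeing for $t\ge 3\varepsilon/4$) yields the desired lift. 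In the $O_1$-case, for each $\chi\in\mathcal C$ I would build $\phi\in\mathcal C'$ with $\phi|_S=\chi$ and $\phi(c(x,t))=\chi(f(x))\,h(t)$ on the collar, where $h$ is smooth with $h\equiv1$ on $[0,\varepsilon/2]$ and $h\equiv0$ near $\varepsilon$; here $\phi$ is smooth on $W\setminus\partial W$ because $\chi\circ f$ is ($f$ being a morphism of stratifolds), and $\phi\in\mathcal C'$ since $\phi(c(w,t))=\phi(f(w))$ for $t\le\varepsilon/2$. As $\chi\circ\rho_1=\phi|_{O_1}$, one gets that $\chi\circ\rho_1\circ u|_V=\phi\circ u|_V$ is smooth, so $\rho_1\circ u|_V$ is a plot of $k(S)$. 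In either case the lift composes with $p'$ to give $\overline r\circ u|_V$, so $\overline r\circ u$ is a plot and $\overline r$ is smooth. The main obstacle is the behaviour near the seam $[f(\partial W)]$, where every neighbourhood of $r_0$ may map to both sides of the adjunction; this is exactly what the open cover $\{O_1,U_2\}$ together with the two extension lemmas \ref{lem:A} and \ref{lem:B} is designed to circumvent.
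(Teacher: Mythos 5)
Your proof is correct and follows essentially the same route as the paper's: both decompose the domain of a plot by the open cover consisting of the image of $S\coprod c(\partial W\times[0,\varepsilon/2))$ and of $W\setminus\partial W$ (using Lemma \ref{lem:A} for $D$-openness), exploit that $r$ vanishes on $[0,\varepsilon/2]$ so that $\overline r$ factors through $k(S)$ on the first piece and through $k(W)$ on the second, and verify the local lifts are plots by extending test functions via Lemma \ref{lem:B}. The only deviations are cosmetic: on the $S$-part you build the extension of $\chi\in\mathcal{C}$ explicitly with a cutoff $h$ where the paper extends along $f\circ r$ and then invokes Lemma \ref{lem:B}, and on the $W$-part you establish smoothness of the lift through coordinate functions rather than testing $\phi\circ(\widetilde{1\times r})\circ p'$ directly against $\phi\in\mathcal{C}_W$.
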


\begin{proof} In order to prove the result, by definition, it suffices to show that $\overline{r}$ locally factors through the projection 
$\pi: k(S)\coprod k(W) \to k(S)\cup_{k(f)}k(W)$, where $k(S)\coprod k(W)$ denotes the coproduct with the sum diffeology. 
In fact, the diffeological quotient space $k(S)\cup_{k(f)}k(W)$ is endowed with the quotient diffeology of the coproduct; see 
Example \ref{ex:finaldiff}. 

In what follows, we may write $\pi(S)$ for $\pi(k(S))$. 
Let $p : U_p \to  k(S\cup_f W)$ be a plot; that is, $p$ is in $\D_{\C_{\text{ad}}^{(S, W)}}$. 
Since the set $\pi(S)$ is closed, 
it follows from Lemma \ref{lem:A} that $V':=U_p\backslash p^{-1}(\pi(S))$ is open in $U_p$. 
Moreover, the composite $\overline{r}\circ p$ has a lift $p'':=(\widetilde{1\times r})\circ p' : V' \to W \to W$ with $\pi \circ p' = p$ for some $p' : V' \to W$ and 
$\pi \circ (\widetilde{1\times r})\circ p' = \overline{r}\circ p$.  We show that $p''$ is smooth. To this end, it suffices to prove that 
$\phi\circ p''$ is a $C^\infty$-function for each $\phi : W= \partial W\cup_{id}W \to {\mathbb R}$ in $\C_W$. 


Consider the composite $\phi\circ (\widetilde{1\times r}) : W \to W \to {\mathbb R}$. Observe that the restriction of $\widetilde{1\times r}$ to 
$\partial W \times [0, \varepsilon)$ is nothing but the map $1\times r$. Then, since the map $\phi$ is in $\C_W$,
the composite $\phi\circ \widetilde{1\times r}|_{\partial W \times [0, \varepsilon)}$ is extended to a smooth map 
$\phi' :  \partial W \times (-\varepsilon, \varepsilon) \stackrel{1\times r}{\to} \partial W \times (-\varepsilon, \varepsilon) \stackrel{\phi}{\to} {\mathbb R}$.
For any element $x\in V'$, we see that $p(x) \in W\backslash \partial W$. Thus, it follows from Lemma \ref{lem:B} that there exist a smooth map 
$\widetilde{\phi'} : S\cup_f W \to {\mathbb R}$ and an open neighborhood $V'(p(x))$ of $p(x)$ such that 
$\widetilde{\phi'}|_{V'(p(x))} = \phi'|_{V'(p(x))}$. Let $U(x)$ be the open subset $p^{-1}(V'(p(x)))$ of the domain of $p$; see Lemma \ref{lem:A}. Then, we have 
\[
\phi\circ p''|_{U(x)} = \phi\circ ((\widetilde{1\times r}))\circ p'|_{U(x)} = (\phi' \circ p')|_{U(x)} = \widetilde{\phi'}\circ p|_{U(x)}. 
\]
This implies that $p'' : V'(p(x)) \to k(W)$ is smooth. 

We show that $\overline{r}\circ p$ on $p^{-1}(\pi(S\coprod (\partial W\times [0, \varepsilon') ))$ factors through $k(S)$ with a suitable smooth map, where $ \varepsilon' = \frac{\varepsilon}{2}$. We write $V''$ for 
the open subset $\pi(S\coprod (\partial W\times [0, \varepsilon') )$. Define $\widetilde{p} : p^{-1}(V'') \to 
S\coprod (\partial W\times [0, \varepsilon'))$ by $\widetilde{p}(x) = p(x)$ if  $x \in p^{-1}(\pi(S))$ and 
$\widetilde{p}(x) = (w, t)$ for some $(w, t) \in \partial W\times [0, \varepsilon')$ with $p(x) = \pi(w, t)$ if $x \in p^{-1}(V'')\backslash p^{-1}(\pi(S))$.  

\begin{lem}\label{lem:smooth}
The composite $(1\coprod (f\circ r)) \circ \widetilde{p} :  p^{-1}(V'') \to k(S)$ is a plot of $k(S)$; that is, the map is smooth. 
\end{lem}

It follows that the composite $\pi\circ (1\coprod (f\circ r)) \circ \widetilde{p} : p^{-1}(V'') \to k(S)\cup_{k(f)}k(W)$ coincides 
with  the map $p \circ \overline{r}$.  Thus, Lemma \ref{lem:smooth} implies that $\overline{r}$ is smooth. 
\end{proof}

\begin{proof}[Proof of Lemma  \ref{lem:smooth}] By definition, it suffices to prove that, for any $\phi \in \C$, the composite $\phi \circ (1\coprod (f\circ r)) \circ \widetilde{p}$ is a smooth function on $p^{-1}(V'')$. By using the composite $f\circ r$, we extend the map $\phi$ to a smooth map $\widetilde{\phi} : S\cup_f (\partial W\times [0, \varepsilon')) \to {\mathbb R}$.
Since $S$ is closed in $S\cup_fW$ and $S\cup_f (\partial W\times [0, \varepsilon')$ is open in $S\cup_fW$, it follows from Lemma \ref{lem:B} that there exists a smooth extension $\Psi : S\cup_fW \to {\mathbb R}$ in $\C_{\text{ad}}^{(S, W)}$  
of $\widetilde{\phi}$ such that $\Psi|_S = \widetilde{\phi}|_S = \phi$. Moreover, it is readily seen that 
$\phi \circ (1\coprod (f\circ r)) \circ \widetilde{p} = \Psi\circ p$. The map $p$ is a plot of $k(S\cup_fW)$ and then  composite $\Psi\circ p$ is smooth. We have the result. 
\end{proof}

\begin{proof}[Proof of Theorem \ref{thm:p-stfd}]
We have to prove that I) $l \circ\overline{r} \simeq 1_{k(S\cup_fW)}$ and II) 
$\overline{r} \circ l \simeq 1_{k(S)\cup_{k(f)}k(W)}$. To this end, we construct smooth homotopies between them. Let $\lambda : {\mathbb R} \to {\mathbb R}$ be a smooth cut-off function with $\lambda(s)= 0$ for $s\leq 0$,  $\lambda(s)= 1$ for $s \geq 1$ and $0 < \lambda(s) < 1$ for $0< s < 1$. 
Define a  continuous homotopy $H : {\mathbb R} \times {\mathbb R} \to {\mathbb R}$ 
between $1_{\mathbb R}$ and $r$ mentioned above 
by $H(t, s) = (1-\lambda(s))r(t) + \lambda(s)u(t)$, where $u(t) = t$ for $t\geq 0$ and $u(t) = 0$ for $t \leq 0$. 
Then, the map $H$ gives a continuous homotopy $H' : W \times {\mathbb R} \to W$. Observe that ${H'}^{-1}(W\backslash \partial W)$ is an open subset of 
$W\backslash \partial W \times {\mathbb R}$. 
By the construction of the homotopy, we have 
\begin{lem}\label{lem:H}
The restriction $H' : {H'}^{-1}(W\backslash \partial W) \to W\backslash \partial W$ is a smooth map between the manifolds. 
\end{lem}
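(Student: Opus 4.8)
The plan is to establish smoothness locally on the source. As noted above, $(H')^{-1}(W\setminus\partial W)$ is open in $(W\setminus\partial W)\times\R$, hence a manifold without boundary, and $W\setminus\partial W$ is an open submanifold of $W$; so it suffices to check that $H'$, viewed as a map into $W$, is smooth near each point of $(H')^{-1}(W\setminus\partial W)$. I would cover this open set by
\[
\mathcal U_1:=\bigl(c(\partial W\times(0,\varepsilon))\times\R\bigr)\cap(H')^{-1}(W\setminus\partial W),\qquad
\mathcal U_2:=\bigl(W\setminus c(\partial W\times[0,\tfrac{3\varepsilon}{4}])\bigr)\times\R.
\]
Here $\mathcal U_2$ is open because $c(\partial W\times[0,\tfrac{3\varepsilon}{4}])$ is compact (as $\partial W$ is compact), hence closed in the Hausdorff space $W$; and $\mathcal U_2\subseteq(H')^{-1}(W\setminus\partial W)$ since it avoids $\partial W=c(\partial W\times\{0\})$. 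These two sets cover: if $(x,s)\in(H')^{-1}(W\setminus\partial W)$ and $x=c(w,t)$ lies in the collar, then $t\neq0$, for otherwise $H'(x,s)=c(w,H(0,s))=c(w,0)\in\partial W$; so $(x,s)\in\mathcal U_1$. Otherwise $x\notin c(\partial W\times[0,\tfrac{3\varepsilon}{4}])$ and $(x,s)\in\mathcal U_2$.

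On $\mathcal U_2$ the map $H'$ is simply the restriction of the projection $W\times\R\to W$: indeed for $t\ge\tfrac{3\varepsilon}{4}$ one has $r(t)=t=u(t)$, hence $H(t,s)=t$, so $H'$ fixes every point of $c(\partial W\times[\tfrac{3\varepsilon}{4},\varepsilon))$ and, a fortiori, every point outside the collar; thus $H'|_{\mathcal U_2}$ is smooth with image in $W\setminus\partial W$. On $\mathcal U_1$ I would use $c\times\mathrm{id}$ as a chart on the source; in collar coordinates $H'$ reads $(w,t,s)\mapsto\bigl(w,H(t,s)\bigr)$, and because $H(t,s)=(1-\rho(s))r(t)+\rho(s)u(t)$ is a convex combination of values in $[0,\varepsilon)$ it stays inside the collar, while restriction to $\mathcal U_1$ forces $H(t,s)\neq0$, so the image again lies in $c(\partial W\times(0,\varepsilon))\subseteq W\setminus\partial W$, i.e. inside a collar chart on the target.

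It then remains to observe that on $\mathcal U_1$ the collar parameter satisfies $t>0$, where $u(t)=t$, so $H(t,s)=(1-\rho(s))r(t)+\rho(s)t$ is a smooth function of $(t,s)$; hence $H'|_{\mathcal U_1}$ is smooth, and the lemma follows since smoothness is local. The only genuinely delicate point --- and the main obstacle in the argument --- is the failure of $u$ to be smooth at $0$: this is precisely why $\partial W$ must be excluded, and one must make sure that after passing to the preimage $(H')^{-1}(W\setminus\partial W)$ every point of the collar piece has parameter $t>0$, so that $u$ is never evaluated near its bad point.
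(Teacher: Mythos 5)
Your argument is correct and is exactly the verification the paper leaves implicit (the lemma is stated with only the remark ``by the construction of the homotopy''): the only possible failure of smoothness comes from $u(t)=\max(t,0)$ at $t=0$, and passing to ${H'}^{-1}(W\setminus\partial W)$ removes precisely the collar parameter $t=0$, after which $H'$ is smooth in collar coordinates and is the identity away from the collar. Your open cover $\mathcal U_1,\mathcal U_2$ and the check that the image of the collar piece stays in $c(\partial W\times(0,\varepsilon))$ make this completely rigorous.
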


The homotopy $H'$ produces a map 
$\widetilde{H}  : (S\coprod W) \times {\mathbb R} \to S\coprod W$ with $\widetilde{H}(y, s) =y$ for $y \in S$, $s \in {\mathbb R}$, 
$\widetilde{H}(c(x, t), s) = c(x, t)$ for $t \geq \frac{3\varepsilon}{4}$, $s \in {\mathbb R}$ and $\widetilde{H}(c(x, t), s) = c(x, H'(t, s))$.  
Then, the homotopy $\widetilde{H}$ gives rise to a map $\overline{H}  : k(S\cup_f W) \times {\mathbb R} \to k(S\cup_f W)$.  

We show that the map $\overline{H}$ is smooth in $\mathsf{Diff}$. For any plot $p$ of $k(S\cup_fW) \times {\mathbb R}$, it will be proved that 
$\phi \circ \overline{H} \circ p : U \to k(S\cup_fW) \times {\mathbb R} \to  k(S\cup_fW) \to {\mathbb R}$ is a smooth function on $U$ 
for each $\phi \in \C_{\text{ad}}^{(S, W)}$. We represent $p$ as $(p', q)$ with plots $p'$ and $q$ of $k(S\cup_fW)$ and  ${\mathbb R}$, respectively. 

Observe that there exists $\delta < \varepsilon$ such that $\phi c(w, t) = \phi f(w)$ for $w \in \partial W$ and for $ s< \delta$. 
Let $x$ be an element in $p^{-1}(\overline{H}^{-1}(S\cup_f\partial W \times [0, \delta)))$. Then, 
we see that $\phi \circ \overline{H} \circ p = \phi\circ p'$ on an appropriate open neighborhood 
$U_{x, \phi}$ of $x$. 
Suppose that $x$ is in $p^{-1}(\overline{H}^{-1}(W\backslash \partial W))$. It follows that 
\[
\phi \circ \overline{H} \circ p = \phi_{W\backslash \partial W}\circ {H'}\circ (p', q) : 
U_x' \to  {H'}^{-1}(W\backslash \partial W) \to W\backslash \partial W \to {\mathbb R}
\]
on some neighborhood $U'_x$ of $x$ in $U$. 
Lemma \ref{lem:H} yields that the right-hand side map is smooth. We have the assertion I). 

In order to prove the assertion II), we use the same construction and notations as above. 
We define a map $(\overline{H})' : k(S)\cup_{k(f)} k(W)\times {\mathbb R} \to k(S)\cup_{k(f)} k(W)$ by 
$(\overline{H})' = \overline{H}$ as a map between the underlying sets. Observe that the diffeology of $k(S)\cup_{k(f)} k(W)$ is different from that of $k(S\cup_fW)$. 
It suffices to show that the map 
$(\overline{H})'$ is smooth in $\mathsf{Diff}$. For any plot $p=(p', q)$ on 
$k(S)\cup_{k(f)} k(W)\times {\mathbb R}$, by definition, we see that $p' : U_p \to k(S)\cup_{k(f)} k(W)$ locally factors through $k(S)$ or $k(W)$ as a plot.

If there exists a plot $p'' : U' \to k(S)$ for some $U' \subset U_p$ such that $\pi \circ p'' = p'$. Then, it follows that $(\overline{H})'\circ p = \pi \circ pr\circ (p'', q)$, where $pr : k(S) \times {\mathbb R} \to k(S)$ is the projection in the first factor. The projection is smooth. It turns out that $(\overline{H})'\circ p$ is smooth. 

We consider the case where $p'$ factors through $k(W)$ on an open neighborhood $U_x$ of an element $x \in U_p$. 
We have to prove that for $\phi \in 
\C_{W}$, the composite $\phi \circ \widetilde{H} \circ (p', q) :  U_r \to k(W) \times {\mathbb R} \to k(W) \to {\mathbb R}$ is a smooth function. 
For $z$ in $U_x$, suppose that $p'(z) \in c(\partial W\times [0, \delta_\phi))$, where $\delta_\phi < \varepsilon$ with $\phi c(w, t) = \phi f(w)$ for 
$w \in \partial W$ and for $ s< \delta_\phi$. 
Then, we see that $\phi \circ \widetilde{H} \circ (p', q) = \phi \circ p'$ on some open neighborhood of $z$, which is determined by $\phi$, in $U_x$. 
If $p'(z)$ is in ${H'}^{-1}(W\backslash \partial W)$, then it follows from Lemma \ref{lem:H} that the composite $\phi \circ \widetilde{H} \circ (p', q) = \phi \circ H' \circ (p', q): U_x \to {H'}^{-1}(W\backslash \partial W) \to  W\backslash \partial W\to {\mathbb R}$  is smooth 
for some open neighborhood of $z$.  This completes the proof. 
\end{proof}

We conclude this section by describing a homological property of a diffeological adjunction space. 

Let $M$ be a diffeological space. 
The homology $H_*(\mathbb{Z}S^D(M))$ of the simplicial set $S^D(M)$ is called the singular homology of $M$, denoted $H_*(M; \mathbb{Z})$. The singular homology $H_*(M, N; \mathbb{Z})$ of a relative space $(M, N)$ is defined by 
$H_*(\mathbb{Z}S^D(M)/\mathbb{Z}S^D(N))$.  

Since $W$ is a manifold and $\partial W$ is a submanifol of $W$, it follows that $k(\partial W)$ is a diffeological subspace of $k(W)$. Moreover, it is readily seen that 
for an inclusion $A \to X$ and a smooth map $f : A \to Y$, the space $X$ is a diffeological subspace of the adjunction space $X\cup_fY$. 

\begin{lem}\label{lem:Theta} Under the same notation as in Example \ref{ex:an_example2}, 
the smooth map $\widetilde{\Theta} : (k(W), k(\partial W)) \to (k(S)\cup_{k(f)} k(W), k(S))$ described before Proposition \ref{prop:well_attached} induces an isomorphism on the singular 
homology and thus on the singular de Rham cohomology. 
\end{lem}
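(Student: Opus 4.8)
The plan is to reduce the statement to the classical excision theorem applied to the singular homology of diffeological spaces. First I would observe that $k(W)$ is, by construction, the stratifold $\partial W\cup_{\mathrm{id}}W$, so the adjunction space $k(S)\cup_{k(f)}k(W)$ is obtained from $k(W)$ by collapsing the collar neighborhood of $k(\partial W)$ along $k(f)$ onto $k(S)$; equivalently, the image of $\widetilde\Theta$ fills the ``$W$-part'' of the adjunction space and the complement of its interior is exactly $k(S)$. Concretely, let $U$ denote the open subset $k(S)\cup_{k(f)}\bigl(\partial W\times[0,\varepsilon)\bigr)$ of $Y:=k(S)\cup_{k(f)}k(W)$, which deformation retracts onto $k(S)$ via a smooth retraction built from the collar $c$ and a cut-off function (the same type of construction as in Lemma \ref{lem:smoothness}). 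Then $Y\smallsetminus k(S)$ is the open subset $k(W\smallsetminus\partial W)$ of $Y$, and $\overline{k(W\smallsetminus\partial W)}\subset Y\smallsetminus (\text{interior of }U)$, so that excision is applicable.

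Next I would invoke the excision axiom for the singular homology of diffeological spaces — this is \cite[Proposition 3.1]{Kihara}, which the excerpt already cites in Example \ref{ex:an_example2} — to conclude that the inclusion $(k(W),\, c(\partial W\times[0,\varepsilon)))\hookrightarrow (Y,\,U)$ induces an isomorphism
\[
H_*\bigl(k(W),\, c(\partial W\times[0,\varepsilon));\,\mathbb{Z}\bigr)\;\xrightarrow{\ \cong\ }\;H_*(Y, U;\,\mathbb{Z}).
\]
Then I would use the fact that the collar deformation retractions give homotopy equivalences $c(\partial W\times[0,\varepsilon))\simeq k(\partial W)$ in $\mathsf{Diff}$ and $U\simeq k(S)$ in $\mathsf{Diff}$ — smoothness of the relevant homotopies is verified exactly as in the proof of Theorem \ref{thm:p-stfd} — together with the long exact sequences of the pairs and the five lemma to replace $(k(W),\,c(\partial W\times[0,\varepsilon)))$ by $(k(W),\,k(\partial W))$ and $(Y,U)$ by $(Y,k(S))$. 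Since $\widetilde\Theta$ is precisely the map of pairs $(k(W),k(\partial W))\to (Y,k(S))$ induced by the inclusion $k(W)\hookrightarrow Y$, assembling these isomorphisms shows that $\widetilde\Theta_*$ is an isomorphism on singular homology.

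Finally, to pass from singular homology to singular de Rham cohomology, I would invoke the de Rham theorem for the singular de Rham complex in $\mathsf{Diff}$ (\cite[Theorem 2.4]{K2020}, also cited in Example \ref{ex:an_example2}): it identifies $H^*(A_{DR}(S^D(-)))$ with singular cohomology with $\mathbb{R}$-coefficients, naturally in the diffeological space, and this extends to relative spaces via the compatible short exact sequences of complexes. Applying the universal coefficient theorem over $\mathbb{R}$ to the homology isomorphism $\widetilde\Theta_*$ then yields the claimed isomorphism on relative (and hence absolute) singular de Rham cohomology. The main obstacle I anticipate is purely bookkeeping: checking that the collar retractions are genuinely \emph{smooth} maps in $\mathsf{Diff}$ (so that they descend to the quotient diffeology on $Y$) and that the neighborhood $U$ has the right openness and closure properties to make the excision axiom literally applicable — but all the needed smoothness arguments are of the same flavour as, and can be quoted from, the proof of Theorem \ref{thm:p-stfd} and Lemma \ref{lem:A}.
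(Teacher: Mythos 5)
Your overall architecture --- excision for the singular homology of diffeological spaces, smooth collar retractions built as in Theorem \ref{thm:p-stfd}, the five lemma, and then the de Rham theorem of \cite[Theorem 2.4]{K2020} --- is exactly the one the paper uses. But the central excision step as you state it does not go through. You claim that excision yields an isomorphism $H_*\bigl(k(W),\,c(\partial W\times[0,\varepsilon))\bigr)\to H_*(Y,U)$ induced by ``the inclusion'' of $k(W)$ into $Y=k(S)\cup_{k(f)}k(W)$. The map $k(W)\to Y$ is not an inclusion: the attaching map $f:\partial W\to S$ is an arbitrary morphism and in general identifies points of $\partial W$, so the image of $W$ in the quotient is $W$ with $\partial W$ collapsed along $f$, not $k(W)$. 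This already fails in the paper's motivating example, the unreduced suspension $\Sigma M$, where $W=M\times I$, $S=\ast\coprod\ast$, and the image of $W$ in $Y$ is all of $\Sigma M$; no choice of excised subset $Z\subset Y$ can make $(Y-Z,\,U-Z)$ equal to $(M\times I,\;(M\coprod M)\times[0,\varepsilon))$. So \cite[Proposition 3.1]{Kihara} is not applicable to the pair you wrote down, and the same issue infects your later assertion that $\widetilde{\Theta}$ is ``induced by the inclusion $k(W)\hookrightarrow Y$.''

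The paper repairs this by excising \emph{more}: it removes $U:=k(S)\cup_{k(f)}k(\partial W\times[0,\delta))$ (with $\delta=\varepsilon/2$) from the pair $\bigl(Y,\;k(S)\cup_{k(f)}k(\partial W\times[0,\varepsilon))\bigr)$, so that what remains, $\bigl(W',\,\partial W\times[\delta,\varepsilon)\bigr)$ with $W'=W-c(\partial W\times[0,\delta))$, lies entirely in the interior of $W$ and genuinely embeds in $Y$. A \emph{second} excision, carried out inside $W$ itself and compared with the first via the smooth map $\widetilde{1\times r_1}:W\to W'$ that pushes the collar coordinate up to $\delta$, then identifies $H_*\bigl(W',\partial W\times[\delta,\varepsilon)\bigr)$ with $H_*\bigl(W,\partial W\times[0,\varepsilon)\bigr)$, and the collar retraction $(\overline{r}\circ l)|$ finishes the reduction to $(k(W),k(\partial W))$; the composite of all these maps is precisely $\widetilde{\Theta}$. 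Your proof needs this extra layer (or an equivalent device) inserted where you currently invoke excision once. The remaining ingredients --- smoothness of the deformation retractions of $U$ onto $k(S)$, quoted from Theorem \ref{thm:p-stfd} and Lemma \ref{lem:smoothness}, and the passage from the homology isomorphism to singular de Rham cohomology via \cite[Theorem 2.4]{K2020} --- are correct and match the paper.
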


\begin{proof} In order to prove the result, 
we use the excision theorem for the singular homology twice. We define the composite $\overline{r}\circ l :  (X, A) :=(k(S)\cup_{k(f)} k(W), k(S)\cup_{k(f)}k(\partial W \times [0, \e))) \to (k(S)\cup_{k(f)} k(W), k(S))$ of pairs of diffeological spaces with maps $l$ and $\overline{r}$ in Theorem \ref{thm:p-stfd} and Lemma \ref{lem:smoothness}. 
Then, the composite 
is a homotopy equivalence with the inclusion $i_1$ as the inverse. In fact, we have 
$(\overline{r}\circ l)\circ i_1 = id$ and $i_1\circ (\overline{r}\circ l) \simeq id$. The second homotopy is induced by the homotopy 
$\overline{H}$ in the proof of Theorem \ref{thm:p-stfd}. Since $l$ is smooth, it follows from Lemma \ref{lem:A} that 
$U:= k(S)\cup_{k(f)}k(\partial W \times [0, \delta))$ is an open subset of $k(S)\cup_{k(f)} k(W)$, where $\delta = \frac{\e}{2}$. 
The excision axiom for the singular homology of a diffeological space (\cite[Proposition 3.1]{Kihara}) enables us to deduce that the inclusion 
\[j : (X- U, A- U)=(W', k(\partial W \times [\delta, \e))) \to (X, A)
\] is an isomorphism on the singular homology.  

Define an increasing smooth function  $r_1 :  [0, \e) \to [0, \e)$ which satisfies the condition that $r_1(t) = \delta$ for 
$0 \leq t \leq \delta$ and $r_1(t) = t$ for $\delta' < t <\e$  for some $\delta'$ with  
$\delta < \delta' <  \e$. We may have a smooth map $\widetilde{1\times r_1} : W \to W'$ extending the map $1\times r_1 : \partial W\times [0, \e) \to 
\partial W\times [\delta, \e)$.  With the smooth map, we obtain a commutative square
\[
\xymatrix@C25pt@R20pt{
(W', k(\partial W\times [\delta, \e)))& (W' - V', k(\partial W\times [\delta, \e)) - V') \ar[l]^-{\text{excision}} \\
(W,  k(\partial W\times [0, \e))) \ar[u]^-{\widetilde{1\times r_1}} & (W - V, k(\partial W\times [0, \e)) - V),  \ar[l]^-{\text{excision}}  \ar[u]_{(\widetilde{1\times r_1)}|}
}
\]
where $V = \partial W \times [0, \delta')$ and $V' = \partial W \times [\delta, \delta')$. 
It is readily seen that the restriction $(\widetilde{1\times r_1})|$ is the identity map between the manifolds. Therefore, we see that  the map 
$\widetilde{1\times r_1}$ induces an isomorphism on the relative singular homology.  Moreover, it follows from the proof of Theorem \ref{thm:p-stfd} 
that the restriction 
$(\overline{r}\circ l)| : (W,  \partial W\times [0, \e)) \to (W,  \partial W)$ is a smooth homotopy equivalence with the inclusion  $i_2$ as a homotopy inverse. Furthermore, we have 
\[
\widetilde{\Theta} = (\overline{r} \circ l) \circ j \circ (\widetilde{1\times r_1}) \circ i_2. 
\]
It turns out that the map $\widetilde{\Theta}$ induces an isomorphism on the relative singular homology. The result \cite[Theorem 2.4]{K2020} implies that 
the $\widetilde{\Theta}$ gives rise to an isomorphism on the singular de Rham cohomology; see Remark \ref{rem:S^D}.
\end{proof}

\section{An algebraic model for the suspension of a manifold}\label{app:appB}

In this section, a commutative model for the unreduced suspension of a manifold is given. We begin by defining the interesting object in $\mathsf{Diff}$. 

Let $M$ be a closed manifold and $j_t : M \to M\times I$ the inclusion defined by $j_t(x) = (x, t)$ for $t= 0$ and $1$. 
Let $f : M \coprod M \to * \coprod *$ be the trivial map. 
Then, we define the {\it (unreduced) suspension} $\Sigma M$ of $M$ in $\mathsf{Diff}$ by 
the image of the functor $k : \mathsf{Stfd} \to \mathsf{Diff}$ of the adjunction space for smooth maps 
$
\xymatrix@C20pt@R20pt{
M\times I & M \coprod M \ar[l]_-{(j_0, j_1)} \ar[r]^-f & \text{$*$} \coprod \text{$*$} 
}
$
in $\mathsf{Stfd}$, 
which is a p-{\it stratifold}; see Section \ref{app:appA}, 
namely,  
\[
\Sigma M := k((M\times I)\cup_{M \coprod M} (*\coprod *)). 
\]
Theorem \ref{thm:p-stfd} yields  that $\Sigma M$ is smooth homotopy equivalent to 
$k(M\times I)\cup_{M\coprod M}(*\coprod *)$. 

We try to describe the Souriau--de Rham complex $\Omega^*(\Sigma M)$ in terms of the de Rham complex $\Omega^*(M)$ of the manifold $M$; 
see Proposition \ref{prop:formsOnSigma M} below. 
We first observe that the factor map $\alpha$ in Section \ref{sect:deRham} and the map $\Theta$ in the diagram (\ref{eq:Theta}) give rise to a quasi-isomorphism 
\[
\Theta \circ \alpha : \Omega^*(\Sigma M) \stackrel{\simeq}{\to} A(k(M\times I))\times_{A(M)\times A(M)}(A(*)\times A(*))=:A_{\Sigma M}^I, 
\]
where $A(\ )$ stands for the singular de Rham complex functor defined by 
$A(X):=\mathsf{Sets}^{\Delta^{op}} (S^D(X)_{\text{aff}}, (A_{DR})_\bullet)$ 
for a diffeological space $X$ and $S^D_n(X)_{\text{aff}}=\mathsf{Diff}({\mathbb A}^n, X)$; see Remark \ref{rem:S^D}.
The result follows from Propositions \ref{prop:well_attached},  \ref{prop:factor_map} and Lemma  \ref{lem:Theta}. We obtain a commutative model for 
the suspension $\Sigma M$. 

\begin{prop}\label{prop:suspensions} \text{\em (cf. \cite[Proposition 13.9]{FHT})} 
Let $M$ be a connected closed manifold. Define a commutative differential graded subalgebra 
${\mathcal C}_{\Sigma M}$ of $\Omega^*(M)\otimes \Omega^*(\R)$ by 
${\mathcal C}_{\Sigma M}:=(\Omega'(M)\otimes dt)\oplus \R$, where the cochain complex 
$\Omega'(M)$ is defined by $\Omega^{>0}(M) = \text{\em Im}\ \! d^0\oplus \Omega'(M)$. 
Then, there exists a quasi-isomorphism 
$\varphi : {\mathcal C}_{\Sigma M} \stackrel{\simeq}{\to} A_{\Sigma M}^I$. 
As a consequence, one has an isomorphism 
\[
\Upsilon : H^*(\Omega^*(\Sigma M)) \stackrel{\cong}{\to} (\widetilde{H^*}(M)\otimes dt)\oplus \R
\]
of graded algebras, where $\R$ denotes the subalgebra of $H^*(\Omega^*(\Sigma M))$ generated by the unit $1$. 
\end{prop}

\begin{proof}
We identify the de Rham complex $\Omega^*(M)$ and $\Omega^*(\R)$ with the usual de Rham complexes of the manifolds via the tautological maps in 
Remark \ref{rem:tautological_map}. Let $I=[0,1]$ be the diffeological subspace of the manifold $\R$. 
We define a map $\psi : \wedge(t, dt) \to A(I)$ by the composite 
$
\xymatrix@C20pt@R20pt{
 \wedge(t, dt) \ar[r]^-{\iota} &\Omega^*(\R) \ar[r]^-{j^*} & \Omega^*(I) \ar[r]^-{\alpha} &A(I)
 }
$
with the canonical inclusion $\iota$,  the inclusion  $j : I \to \mathbb{R}$ and the factor map $\alpha$ mentioned in Section \ref{sect:deRham}. 
 Then, we have a commutative diagram 
\[
\xymatrix@C28pt@R20pt{
A(k(M \times I)) \ar[r]^-{in^*} \ar[rd]^{(j^*_0, j^*_1)} & A(M\times \{0\} \coprod M\times \{1\}) & A(* \coprod *) \ar[l]_-{f^*}\\
A(M)\otimes \wedge(t, dt) \ar[u]^{p^* \cdot \psi} \ar[r]_{\nu:=(id\cdot \varepsilon_0, id\cdot \varepsilon_1) }& A(M) \times A(M) \ar[u]_{\cong}^\eta & \R \times \R, \ar[u]_{\simeq}  \ar[l]^-c
}
\]
where $p$ is the projection $k(M\times I) \to M$, $c$ is the canonical map, $in$ is the inclusion $M\times \{0\} \coprod M\times \{1\}\to M\times I$
and $\eta$ denotes the natural isomorphism. We observe that the vertical arrows are quasi-isomorphism 
and the map 
$\nu=(id\cdot \varepsilon_0, id\cdot \varepsilon_1)$ is an epimorphism. The map $in$ induces an injective map 
$S^D(M\times \{0\} \coprod M\times \{1\})_{\text{aff}} \to S^D(M \times I)_{\text{aff}}$. Moreover, the simplicial CDGA $(A_{DR}^*)_\bullet$ is extendable; see \cite[The comment before Corollary 3.5]{K2020}. 
Thus,  it follows from \cite[Proposition 10.4]{FHT} that $in^*$ is an epimorphism. Therefore, by virtue of \cite[Lemma 13.3]{FHT}, 
we see that the vertical maps yield a quasi-isomorphism 
\[
\widetilde{\varphi} : \widetilde{C}_{\Sigma M}:= (A(M)\otimes \wedge(t, dt))\times_{A(M)\times A(M)}(\R\times \R)\stackrel{\simeq}{\longrightarrow} A_{\Sigma M}^I. 
\]
We consider the homology long exact sequence associated with the short exact sequence
\[
\xymatrix@C15pt@R20pt{
0 \ar[r]  & \widetilde{C}_{\Sigma M} \ar[r] & (A(M)\otimes \wedge(t, dt))\times (\R\times \R) \ar[r]^-{\nu-c}  & A(M)\times A(M) \ar[r] & 0.
}
\]
It is immediate that 
$H^0(\nu-c) : H^0((A(M)\otimes \wedge(t, dt))\times (\R\times \R)) \to H^0( A(M)\times A(M))$ is an epimorphism. 
Moreover, we have $H^i(\nu-c)(w\otimes 1, 0)= (w, w)$ for $i \geq 1$. We write $H^i(\nu-c)H^i(A(M))\oplus H^i(A(M))$ for $H^i(A(M)\times A(M))$, where $i \geq 2$. 
Thus, it follows that 
$H^1(\widetilde{C}_{\Sigma M})= 0$ and that the connecting homomorphism $\delta$ in the long exact sequence induces an isomorphism $\delta : 0\oplus H^i(A(M)) \stackrel{\cong}{\to} H^{i+1}(\widetilde{C}_{\Sigma M})$ for $i\geq 1$.  The definition of $\delta$ enables us to deduce that $\delta(0, w)= (w\otimes dt, 0)$ on cohomology. It turns out that 
$\kappa : (\Omega'(M)\otimes dt )^* \to \widetilde{C}_{\Sigma M}$ defined by 
\begin{eqnarray}\label{eq:xi}
\kappa(w\otimes dt) = (\alpha(w)\otimes dt, 0)
\end{eqnarray}
is a quasi-isomorphism for $* \geq 2$. Define $\varphi$ by the composite $\widetilde{\varphi}\circ\kappa$. 
The isomorphism $\Upsilon$ in the assertion is given by $\Upsilon = H(\varphi)^{-1}\circ H(\Theta)\circ H(\alpha)$. 
We have the result. 
\end{proof}

We recall a smooth cut-off function $\rho : {\mathbb R} \to {\mathbb R}$ in the proof of Theorem \ref{thm:p-stfd}. 
Observe that the codomain is included in $I$. 
We see that a map  $1\times \lambda : M\times \R \to k(M\times I)$ is smooth and a smooth homotopy equivalence with the inverse $1\times (\lambda|_I)$. 

\begin{prop}\label{prop:formsOnSigma M}
There exists an isomorphism 
\[\Upsilon' : H^*(\Omega^*(\Sigma M)) \stackrel{\cong}{\to} 
H^*(\Omega^*(M\times \mathbb{R})\times_{\Omega^*(M)\times \Omega^*(M)}
\Omega^*(\ast)\times \Omega^*(\ast)).
\]
Here the pullback in the right-hand side is given by the inclusion $(j_0, j_1) : M \coprod M \to M\times \R$ defined by 
$j_t(x) = (x, t)$ for $t= 0$ and $1$. 
Moreover, for each element $\omega$ in $H^*(\Omega^*(\Sigma M))$ with $\deg \omega > 0$, there exists a cocycle 
$\beta \in \Omega^* (M)$ such that $\Upsilon'(\omega) = (\beta \cdot \lambda^*(dt), 0)$. 
\end{prop}

\begin{proof} Let $\Omega_{\Sigma M}^I$ and $\Omega_{\Sigma M}^\R$ denote the CDGA's 
$\Omega^*(k(M\times I))\times_{\Omega^*(M)\times \Omega^*(M)}\Omega^*(\ast)\times \Omega^*(\ast))$ and
$\Omega^*(M\times \R)\times_{\Omega^*(M)\times \Omega^*(M)}\Omega^*(\ast)\times \Omega^*(\ast))$, respectively. Let 
$A_{\Sigma M}^\R$ be the CDGA $A(M\times \R)\times_{A(M)\times A(M)}A(\ast)\times A(\ast))$. 
We consider a diagram
\begin{eqnarray}\label{eq:Omega_A}
\xymatrix@C15pt@R15pt{
H^*(\Omega^*(\Sigma M)) \ar[r]_-{\cong}^-\Upsilon \ar[d]_{H(\Theta)} & (\widetilde{H^*}(M)\otimes dt)\oplus \R \ar[d]^{H(\varphi)}_{\cong} \\
H(\Omega_{\Sigma M}^I) \ar[r]_{H(\alpha\times \alpha)} \ar[d]_{((1\times \lambda)\times 1)^*}& H(A_{\Sigma M}^I) \ar[d]^{((1\times \lambda)\times 1)^*}\\
H(\Omega_{\Sigma M}^\R) \ar[r]_{H(\alpha\times \alpha)=:\alpha'}& H(A_{\Sigma M}^\R),
 }
\end{eqnarray}
where $\Theta$ is the morphism of CDGA's obtained by replacing $A( \ )$ with the Souriau-de Rham complex $\Omega^*(\ )$ in the diagram (\ref{eq:Theta}).
The naturality of the factor map $\alpha$ with respect to smooth maps allows us to deduce that the lower square is commutative. 
The universality of the pullbak and the naturality of $\alpha$ gives the commutativity of the upper square. 
Since $1\times \lambda$ is a homotopy equivalence, it follows from \cite[Lemma 13.3]{FHT} 
that the right-hand side $((1\times \lambda)\times 1)^*$ is an isomorphism.

Consider a commutative triangle
\[
\xymatrix@C15pt@R20pt{
\Omega^*(M\times \R) \ar[r]^-{(j_0^*, j_1^*)}& \Omega^*(M)\times \Omega^*(M), \\
\Omega^*(M)\otimes \Omega^*(\R) \ar[u]^{p_1^*\cdot p_2^*} \ar[ru]_-{(id\cdot \iota_0^*, id\cdot \iota_1^*)} &
}
\]
where $p_i$ is the projection in the $i$th factor and $\iota_\tau : \{\tau\} \to \R$ is the inclusion for $\tau = 0$ and $1$. Then it is readily seen that 
$(id\cdot \iota_0^*, id\cdot \iota_1^*)$ is an epimorphism and hence so is $(j_0^*, j_1^*)$. Thus, by using Proposition \ref{prop:factor_map} and  \cite[Lemma 13.3]{FHT} again, we see that $\alpha'=H(\alpha\times \alpha)$ is an isomorphism.  
The composite $((1\times \lambda)\times 1)^* \circ H(\Theta)$ is the required map $\Upsilon$. 

For an element $\omega \in H^*(\Omega^*(\Sigma M))$ with $\deg \omega > 0$, we can write $\Upsilon(\omega) = \beta \otimes dt$ for some cocycle $\beta \in \Omega^*(M)$. It follows from a direct computation that $((1\times \lambda)\times 1)^*\circ H(\varphi)(\beta \otimes dt) =(\alpha(\beta \cdot \lambda^*(dt)), 0)$.  Moreover, we see that $\alpha'((\beta \cdot \lambda^*(dt), 0)) = (\alpha(\beta \cdot \lambda^*(dt)), 0)$. In view of the fact that $\alpha'$ is an isomorphism, 
the commutativity of the diagram (\ref{eq:Omega_A}) implies the latter assertion. 
\end{proof}



\end{document}